\newtheorem{propo}{Proposition}[section]
\newtheorem{lemma}[propo]{Lemma}
\newtheorem{theo}[propo]{Theorem}
\newtheorem{prop}[propo]{Proposition}
\newcommand{\bl}{\begin{lemma}}
\newcommand{\el}{\end{lemma}}
\def\d12{{_{12}}}
\def\Cay{{\rm Cay}}
\def\Cos{{\rm Cos}}
\def\Aut{{\rm Aut}}
\def\diam{{\rm diam}}
\def\K{{\rm K}}
\def\AGL{{\rm AGL}}
\def\H{{\rm H}}
\newcommand\PG{{\rm PG}}
\begin{document}
\title{Two-arc-transitive bicirculants
}

\thanks{Supported by the NNSF of China (12061034,12071484) and  NSF of Jiangxi (20192ACBL21007, GJJ190273)}

\thanks{The author is  grateful to Professor Shao Fei Du  for the  discussion
and comments on this paper.}

\author[W. Jin]{Wei Jin}
 \address{Wei Jin\\School of Mathematics and Statistics\\
Central South University\\
Changsha, Hunan, 410075, P.R.China}
\address{School of Statistics\\
Jiangxi University of Finance and Economics\\
 Nanchang, Jiangxi, 330013, P.R.China}
\email{jinweipei82@163.com}


\maketitle

\begin{abstract}
 In this paper, we  determine the class of finite 2-arc-transitive bicirculants.
We show that  a connected $2$-arc-transitive  bicirculant  is   one of the following graphs:
$C_{2n}$ where $n\geqslant 2$, $\K_{2n}$ where $n\geqslant 2$, $\K_{n,n}$ where $n\geqslant 3$, $ \K_{n,n}-n\K_2$ where $n\geqslant 4$,
 $B(\PG(d-1,q))$ and $B'(\PG(d-1,q))$ where $d\geq 3$ and $q$ is a prime power,
 $ X_1(4,q)$ where $q\equiv 3\pmod{4}$ is a prime power,
    $\K_{q+1}^{2d}$ where   $q$ is an odd prime power and $d\geq 2$ dividing   $q-1$,
$ AT_Q(1+q,2d)$ where $d\mid q-1$ and $d\nmid \frac{1}{2}(q-1)$,
$ AT_D(1+q,2d)$ where  $d\mid \frac{1}{2}(q-1)$ and $d\geq 2$,
    $\Gamma(d, q, r)$, where $d\geq 2$,  $q$ is a prime power and $r|q-1$, Petersen graph, Desargues graph,  dodecahedron graph,
folded $5$-cube, $X(3,2)$,  $ X_2(3)$, $ AT_Q(4,12)$,  $GP(12,5)$, $GP(24,5)$, $B(H(11))$,  $B'(H(11))$,
 $ AT_D(4,6)$ and $ AT_D(5,6)$.

\end{abstract}

\vspace{2mm}

\bigskip
\bigskip

\section{Introduction}

For a positive integer $s$, an $s$-arc of a graph $\Gamma$ is a sequence of
vertices $(v_0,v_1,\ldots,v_s)$  in  $\Gamma$ such that
$v_i,v_{i+1}$ are adjacent and $v_{j-1}\neq v_{j+1}$ where $0\leq
i\leq s-1$ and $1\leq j\leq s-1$. In particular, 1-arcs are called
\emph{arcs}.
The graph $\Gamma$ is said to be \emph{$(G,s)$-arc-transitive}
if, for each $i\leq s$, its graph automorphism subgroup $G$
 is transitive on the set of  $i$-arcs.
Moreover,  if $G$ is the whole graph automorphism group, then
$G$ is often omitted and we write simply \emph{$s$-arc-transitive}.
The first remarkable result about $s$-arc-transitive graphs comes
from Tutte  \cite{Tutte-1,Tutte-2}, and since then, this family of graphs has
been studied extensively, see
\cite{IP-1,Li-abeliancay-2008,weiss}.

A graph with $2n$ vertices   is said to be  a \emph{bicirculant} if its automorphism group contains  an element  $g$ of order $n$ which fixes no vertex and has  exactly two length $n$ cycles
 on the vertex set. The family  of bicirculants includes Cayley graphs of dihedral groups, the generalized Petersen graphs \cite{FGW-1971}, Rose-Window graphs \cite{Wilson} and Taba\v{c}jn graphs \cite{AHKOS-2015} (particularly, for the last  three families of graphs,  the two orbits of the cyclic group $\langle g\rangle$ form two  cycles in the graph).

Some classification results on
arc-transitive bicirculants with small valency have been achieved. For instance, combining together
the results of \cite{FGW-1971,MP-2000,P-2007} one obtains the classification of all cubic arc-transitive bicirculants.  Similarly, the
classification of tetravalent arc-transitive bicirculants is achieved by  the results of \cite{KKM-2010,KKM,KKMW-2012}.
Recently, the determination of pentavalent arc-transitive bicirculants was also obtained \cite{AHK-2015,AHKOS-2015}.
Jajcay and his colleagues in \cite{JMSV} investigated  higher valency bicirculants,
 the automorphisms of bicirculants on $2p$ vertices for $p$ a prime are well understood in \cite{MMSF-2007}, while  the family of finite arc-transitive bicirculants is characterized in \cite{DGJ-2019}.

We call a graph with $n$ vertices  a {\it circulant} if it has an automorphism $g$ that is an  $n$-cycle.
Alspach and his colleagues determined the class
of finite 2-arc-transitive circulants in  \cite[Theorem 1.1]{ACMX-1996}.
In this paper, we extend this classification result to the family of 2-arc-transitive bicirculants, that is,
we will  find out all   the  2-arc-transitive bicirculants, and this
result is also a generalization of the classification result of
\cite{DMM-2008}.


\medskip
Our main theorem  precisely determines the family of  2-arc-transitive bicirculants.

\begin{theo}\label{th-quot-miniblock-1}
Let $\Gamma$ be a connected $2$-arc-transitive  bicirculant.
Then    $\Gamma$ is one of the following graphs:

 \begin{itemize}

\item[(1)]  $C_{2n}$ where $n\geqslant 2$;

\item[(2)]  $\K_{2n}$ where $n\geqslant 2$;

\item[(3)]   $\K_{n,n}$ where $n\geqslant 3$;

\item[(4)] $ \K_{n,n}-n\K_2$ where $n\geqslant 4$;

\item[(5)] $B(\PG(d-1,q))$ and $B'(\PG(d-1,q))$, where $d\geq 3$, $q$ is a prime power;

\item[(6)] $ X_1(4,q)$ where $q\equiv 3\pmod{4}$ is a prime power;

\item[(7)]    $\K_{q+1}^{2d}$ for some $d\geq 2$ dividing   $q-1$, where $q$ is an odd prime power;

\item[(8)]  $ AT_Q(1+q,2d)$, where $d\mid q-1$ and $d\nmid \frac{1}{2}(q-1)$;

\item[(9)]  $ AT_D(1+q,2d)$, where  $d\mid \frac{1}{2}(q-1)$ and $d\geq 2$;

\item[(10)] $\Gamma(d, q, r)$, where $d\geq 2$,  $q$ is a prime power and $r|q-1$;

\item[(11)]  Petersen graph;

\item[(12)]  Desargues graph;

\item[(13)]  dodecahedron graph;

\item[(14)]  folded $5$-cube;

\item[(15)] $X(3, 2)$;

\item[(16)] $ X_2(3)$;

\item[(17)]  $ AT_Q(4,12)$;

\item[(18)]  $GP(12,5)$ and $GP(24,5)$;

\item[(19)]  $B(H(11))$ and $B'(H(11))$;

\item[(20)]    $ AT_D(4,6)$ and $ AT_D(5,6)$.

\end{itemize}

\end{theo}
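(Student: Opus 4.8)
The overall strategy is a normal-quotient reduction to ``basic'' bicirculants, resting on Praeger's normal quotient method for $2$-arc-transitive graphs together with the structure theory of arc-transitive bicirculants from \cite{DGJ-2019}, followed by a case analysis of the basic configurations via the (bi-)quasiprimitive classifications, in which the presence of a semiregular cyclic group of order half the number of vertices does most of the work. Throughout, write $G=\Aut(\Gamma)$ and let $R=\langle g\rangle\cong\ZZ_n$ be the semiregular cyclic subgroup with orbits $\Delta_0,\Delta_1$ of length $n$. Realising $\Gamma$ as a bi-Cayley graph over $\ZZ_n$ identifies $\Gamma[\Delta_0],\Gamma[\Delta_1]$ with circulants $\Cay(\ZZ_n,S_0),\Cay(\ZZ_n,S_1)$ and the edges between $\Delta_0$ and $\Delta_1$ with a third connection set; in particular $\Gamma$ is either bipartite with parts $\Delta_0,\Delta_1$ (the ``incidence'' case $S_0=S_1=\varnothing$) or not. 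Since $\Gamma$ is connected and $(G,2)$-arc-transitive, $G_v$ acts $2$-transitively on $\Gamma(v)$, which pins down the admissible valencies and local groups and is used repeatedly.

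First I would run the standard reduction: if $N\trianglelefteq G$ has at least three orbits on $V(\Gamma)$, the normal quotient $\Gamma_N$ is $(G/N,2)$-arc-transitive and $\Gamma$ is a normal cover of it. The point to establish is that $N$ can be chosen so that $\Gamma_N$ is again a bicirculant --- concretely that $RN/N$ is semiregular on $V(\Gamma_N)$ with exactly two orbits, which uses that $R$ is cyclic (so $R\cap N$ is cyclic and its orbits form a uniform block system refining the $N$-orbits). Iterating reduces to a \emph{basic} $2$-arc-transitive bicirculant, i.e.\ one on which every nontrivial normal subgroup of the automorphism group has at most two orbits; two further tasks then remain: classify the basic ones, and for each determine all of its $2$-arc-transitive bicirculant normal covers. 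The latter is what produces the ``cover'' members of the list (Desargues over the Petersen graph, $GP(24,5)$ over $GP(12,5)$, $AT_D(5,6)$, and $\K_{n,n}$, $\K_{n,n}-n\K_2$, $B(\PG(d-1,q))$, $B'(\PG(d-1,q))$, $B(H(11))$, $B'(H(11))$ built over smaller incidence structures), and is handled by covering-space / voltage-graph techniques applied to the already-identified base graphs.

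For the basic case, $G$ is either vertex-biquasiprimitive or vertex-quasiprimitive. (a) If $G$ is biquasiprimitive but not quasiprimitive then $\Gamma$ is bipartite, the two parts $\Delta_0,\Delta_1$ form a block system fixed by an index-$2$ subgroup $G^{+}$ that is quasiprimitive on each part, and since $|\Delta_i|=n=|R|$ the group $R$ is \emph{regular} on each part; so $\Gamma$ is a connected bipartite bi-Cayley graph of $\ZZ_n$ with $G^{+}$ acting $2$-arc-transitively, and Praeger's description of $2$-arc-transitive bipartite graphs together with the classification of $2$-transitive groups recovers $C_{2n}$, $\K_{n,n}$, $\K_{n,n}-n\K_2$, $B(\PG(d-1,q))$, $B'(\PG(d-1,q))$, the Desargues graph, $B(H(11))$ and $B'(H(11))$. (b) If $G$ is vertex-quasiprimitive, then by Praeger's classification of quasiprimitive $2$-arc-transitive graphs the socle of $G$ has type $\HA$, $\AS$, $\TW$ or $\PA$, and having a semiregular $R\cong\ZZ_n$ with $|R|=\tfrac12|V(\Gamma)|$ is very restrictive: in the $\HA$ case $V(\Gamma)=\FF_p^{d}$ forces $p=2$ and leads to $\K_{2n}$ and the folded $5$-cube; the $\TW$ and $\PA$ cases are ruled out by showing no such semiregular cyclic subgroup exists; and in the $\AS$ case, where the socle is a simple group acting with a semiregular cyclic subgroup of index $2$ in the point set, one invokes the CFSG-based lists of almost simple primitive groups with a (semi)regular cyclic subgroup to obtain $\K_{q+1}^{2d}$, $X_1(4,q)$, $AT_Q(1+q,2d)$, $AT_D(1+q,2d)$, $\Gamma(d,q,r)$, $AT_Q(4,12)$, $AT_D(4,6)$, $X(3,2)$, $X_2(3)$, the Petersen graph, the dodecahedron and $GP(12,5)$. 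At each stage the known classifications of $2$-arc-transitive circulants \cite{ACMX-1996} and of $2$-arc-transitive dihedrants \cite{DMM-2008} shortcut the bookkeeping.

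The main obstacle will be the almost simple quasiprimitive case: deciding exactly which $2$-arc-transitive coset graphs of classical and exceptional simple groups carry a semiregular cyclic subgroup of half the point-number, and exhaustively pinning down the numerous small sporadic configurations without omission. A second delicate point is the cover step --- proving that every $2$-arc-transitive bicirculant cover of each basic graph has been accounted for requires a genuine covering-space analysis rather than a routine computation. Finally, the converse direction must be checked: each graph in the list must be verified to be a connected $2$-arc-transitive bicirculant by exhibiting an appropriate semiregular automorphism of order $|V|/2$ and the required local $2$-transitivity.
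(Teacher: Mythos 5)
Your overall skeleton (normal quotient reduction, classify the basic graphs, then classify covers) is the same as the paper's, but two of your structural claims are wrong in a way that would derail the execution. First, it is not true that a maximal intransitive normal subgroup $N$ can be chosen so that $\Gamma_N$ is again a bicirculant with $RN/N$ semiregular with two orbits. The correct dichotomy (Lemma \ref{quot-orbit-01} and Theorem \ref{bic-reduction-th2}) is: either every $N$-orbit lies inside one $R$-orbit, $N\leq R$ is cyclic and $\Gamma_N$ is a bicirculant; or the $N$-orbits meet both $R$-orbits, in which case $R\cap N$ is a cyclic subgroup of index $2$ in $N$ (so $N$ is metacyclic, typically dihedral or generalized quaternion) and $\Gamma_N$ is a \emph{circulant}, not a bicirculant over $RN/N$. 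Your iteration ``reduce until basic, always staying a bicirculant with cyclic covering groups'' erases this second branch, and that branch is exactly where the families $AT_D(1+q,2d)$, $AT_Q(1+q,2d)$, $AT_D(4,6)$, $AT_D(5,6)$, $AT_Q(4,12)$ come from, via the classification of $2$-arc-transitive metacyclic covers of complete graphs (Theorem \ref{metacyclic-complete-th1}); the paper's single-step choice of $N$ maximal with at least three orbits is what guarantees the covering group is cyclic or metacyclic and makes the existing cover classifications applicable.

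Second, your assignment of graphs to the vertex-quasiprimitive (almost simple) basic case is incorrect: $X_1(4,q)$, $\K_{q+1}^{2d}$, $\Gamma(d,q,r)$, $GP(12,5)$, the dodecahedron, $X_2(3)$, $X(3,2)$, and all the $AT_Q/AT_D$ graphs admit a nontrivial normal (covering) subgroup with at least three vertex-orbits, so their automorphism groups are not quasiprimitive and they cannot arise from any analysis of quasiprimitive $2$-arc-transitive types $\HA/\AS/\TW/\PA$ with a semiregular cyclic half-subgroup. In the paper the quasiprimitive $2$-arc-transitive bicirculants are only $\K_{2n}$, the Petersen graph and the folded $5$-cube (Proposition \ref{bicirculant-quasiprimitive-th1} plus girth considerations), and all the graphs you listed under the $\AS$ case are produced in the cover step (Sections 4 and 5), using the cyclic-cover classifications for $\K_n$, $\K_{n,n}-n\K_2$ and $B'(\PG(d-1,q))$, and voltage-vanishing arguments to show $\K_{n,n}$, $B(\PG(d-1,q))$, $B(H(11))$, $B'(H(11))$ and the folded $5$-cube admit no such covers. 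As proposed, your $\AS$-case analysis would not generate these graphs, and your cover step (which names only a few of them, and conversely lists as ``covers'' several graphs that are actually base graphs here) would not catch them either, so the plan as structured would omit a substantial part of the classification.
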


\bigskip

The definitions of the graphs arising in Theorem  \ref{th-quot-miniblock-1} will be given in the next section.
The  graphs from (1)-(10) are   infinite families of 2-arc-transitive graphs, and graphs from (11)-(20) are sporadic 2-arc-transitive graphs.


This paper is organized  as follows.
After this introduction, we give, in Section 2,
some definitions on groups and graphs that we need and also prove some elementary  lemmas which will be used in the following analysis.
Let $\Gamma$ be a connected $2$-arc-transitive  bicirculant  with at least three vertices.
Let $N$ be a  normal subgroup of $G$ maximal with respect to having  at least $3$ orbits.
It is proved in Theorem \ref{bic-reduction-th2} that:
$\Gamma$ is a cyclic or metacyclic cover of $\Gamma_N$,
$\Gamma_N$ is $(G/N,2)$-arc-transitive and  is either a circulant or a bicirculant, and $G/N$ is faithful and either quasiprimitive or bi-quasiprimitive on $V(\Gamma_N)$. Moreover,  all the base graphs are determined.
The family of cyclic covers of base graphs is determined in Section 4, and
the family of metacyclic covers of base graphs is determined in Section 5.

\bigskip


\section{Preliminaries}

In this section, we give some definitions about groups and graphs and also prove some  results which will be used in the
following discussion.


We denote by $\mathbb{Z}_n$ the cyclic group of order $n$.
Let $G$ be a permutation group on a set $\Omega$ and $\alpha \in \Omega$. Denote by $G_\alpha$ the stabilizer of  $\alpha$ in $G$, that is, the subgroup of $G$ fixing the point $\alpha$. We say that $G$ is \emph{semiregular} on
$\Omega$ if $G_\alpha=1$ for every $\alpha\in \Omega$ and \emph{regular} if $G$ is transitive and semiregular.

Let  $G$ be a transitive permutation group on a set  $\Omega$ and
let $\mathcal{B}$ be a $G$-invariant partition of $\Omega$. If the only possibilities for $\mathcal{B}$ are the partition into one part, or the partition into singletons then $G$ is called \emph{primitive}. The \emph{kernel} of $G$ on $\mathcal{B}$ is the normal subgroup of $G$ consisting of all elements that fix setwise each $B\in\mathcal{B}$.
Let $B$ be a non-empty
subset of $\Omega$. Then $B$ is called a \emph{block} of
$G$ if, for any $g\in G$, either $B^g=B$ or
$B^g\cap B =\varnothing$.
If $N$ is an intransitive normal subgroup of $G$, then each $N$-orbit is a block of $G$, and the
set of $N$-orbits $\mathcal{B}$   forms a
$G$-invariant partition of $\Omega$.

\subsection{Graph theoretic notions}

All graphs in this paper are finite, simple, connected and undirected. For a graph $\Gamma$, we use  $V(\Gamma)$ and
$\Aut(\Gamma)$ to denote its \emph{vertex set}  and
\emph{automorphism group}, respectively.  For the group theoretic terminology not defined here we refer the reader to \cite{Cameron-1,DM-1,Wielandt-book}.


 For a graph $\Gamma$, its
\emph{complement} $\overline{\Gamma}$ is the graph with vertex set $V(\Gamma)$,
and two vertices are adjacent if and only if they are not adjacent in $\Gamma$.
We denote the complete graph on $n$ vertices by $\K_n$.

Let $\Gamma$ be a graph, and $G\leqslant \Aut (\Gamma)$. Suppose
$\mathcal{B}=\{B_1,B_2,\ldots,B_n \}$ is a partition of $V(\Gamma)$.
The \emph{quotient graph} $\Gamma_{\mathcal{B}}$ of $\Gamma$
relative to $\mathcal{B}$ is defined to be the graph with vertex set
$\mathcal{B}$ such that $\{B_i,B_j\}$ is an edge of
$\Gamma_{\mathcal{B}}$ if and only if there exist $x\in B_i, y\in
B_j$ such that $\{x,y\} \in E(\Gamma)$. We say that
$\Gamma_{\mathcal{B}}$ is \emph{non-trivial} if $1< |\mathcal{B}|<
|V(\Gamma)|$. If for any two adjacent blocks $B_i,B_j$ and $x\in
B_i$, we have $|\Gamma(x)\cap B_j|=1$, then we say that $\Gamma$ is
a \emph{cover} of $\Gamma_{\mathcal{B}}$. If $\mathcal{B}$ is
$G$-invariant, then $G$ has an induced action on
$\Gamma_{\mathcal{B}}$ as a subgroup of automorphisms. Whenever the
blocks in $\mathcal{B}$ are the $N$-orbits, for some non-trivial
normal subgroup  $N$ of $G$, we will write
$\Gamma_{\mathcal{B}}=\Gamma_{N}$, and call it a \emph{$G$-normal
quotient} of $\Gamma$.   Moreover, if $\Gamma$ is a cover of
$\Gamma_{\mathcal{B}}$ and $\Gamma_{\mathcal{B}}=\Gamma_{N}$, then
$\Gamma$  is called a \emph{$G$-normal cover} of
$\Gamma_{\mathcal{B}}$. Suppose that  $\Gamma$ is  a  cover of $\Gamma_{\mathcal{B}}$. If the kernel of the $G$-action on $\mathcal{B}$ is a cyclic or metacyclic group, then
$\Gamma$ is called a \emph{cyclic cover}  or \emph{metacyclic cover} of $\Gamma_{\mathcal{B}}$, respectively.

If $G\leq \Aut(\Gamma)$ is vertex-transitive on a bipartite graph $\Gamma$, then $G$ has a subgroup $G^+$ of index 2 with two orbits on $V(\Gamma)$, say $\Delta_1$ and $\Delta_2$, the two parts of the bipartition of $V(\Gamma)$. We shall use this notation, for $G^+$, $\Delta_1$ and $\Delta_2$, throughout the paper.
A group acting on a set $\Delta$ is said to be \emph{faithful} on $\Delta$ if the only element which fixes $\Delta$
pointwise is the identity.

For a finite group $T$, and a subset $S$ of $T$ such that $1\notin
S$ and $S=S^{-1}$, the \emph{Cayley graph} $\Cay(T,S)$ of $T$ with
respect to $S$ is  the graph with vertex set $T$ and edge set
$\{\{g,sg\} \,|\,g\in T,s\in S\}$. In particular, $\Cay(T,S)$ is
connected if and only if $T=\langle S\rangle$.
The group $R(T) = \{
\sigma_t|t\in T\}$ of right multiplications $\sigma_t : x \mapsto xt$
is a subgroup of the automorphism group $\Aut(\Gamma)$ and acts
regularly on the vertex set. Indeed, a graph is a Cayley graph if and only if it admits a regular group of automorphisms. We note that circulants are precisely the Cayley graphs for cyclic groups.

A graph $\Gamma$ is said to be a \emph{bi-Cayley graph} over a group $H$ if it admits $H$ as a semiregular automorphism
group with two orbits of equal size. (Some authors have used the term \emph{semi-Cayley} instead, see \cite{LM-1993,RJ-1992}.) Moreover, bicirculants are exactly the bi-Cayley graphs over cyclic groups.
The family of  bi-Cayley graphs has been extensively studied, for example cubic bi-Cayley graphs over abelian groups were investigated by Zhou and Feng \cite{ZF-bicay-2014} while the automorphism groups of bi-Cayley graphs were studied in \cite{ZF-bicir-2016}.

Let $G$ be a group and let $L,R$ be two subgroups of $G$. Let $D$ be a union of double cosets of $L$ and $R$ in $G$, that is, $D=\bigcup_iRd_iL$. Denote by $[G:L]$ and $[G:R]$ the set of cosets of $L$ and $R$ in $G$, respectively. Define a bipartite graph $\Cos(G,L,R,D)$ with two bipartite halves $[G:L]$ and $[G:R]$, and edge set
$\{\{Lg,Rdg\}|g\in G,d\in D\}$. This graph is called the \emph{bi-coset graph} of $G$ with respect to $L,R$ and $D$.

Let $\Gamma$ be a $G$-vertex-transitive  bicirculant over a cyclic subgroup $H$ of $G$. Then the first lemma presents a
relationship between the two orbits of  $H$  and the blocks for the action of $G$ on the vertex set.

\begin{lemma}{\rm(\cite[Lemma 3.1]{DGJ-2019})}\label{quo-1}
Let $\Gamma$ be a $G$-vertex-transitive   bi-Cayley graph over the  subgroup $H$ of $G$.
Let $H_0$ and $H_1$ be the two orbits of $H$ on $V(\Gamma)$ and
let $\mathcal{B}$ be a $G$-invariant partition of $V(\Gamma)$. Then the following hold.

\begin{enumerate}[$(1)$]
\item Either all elements of $\mathcal{B}$ are subsets of $H_0$ or $H_1$;
or  $B\cap H_0\neq \varnothing$ and $B\cap H_1\neq \varnothing$ for every   $B\in \mathcal{B}$.

\item If $B\in \mathcal{B}$ and  $B\cap H_i\neq \varnothing$ for some $i$, then $B\cap H_i$ is a block for $H$ on $H_i$.
\end{enumerate}

\end{lemma}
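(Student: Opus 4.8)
The plan is to exploit the semiregularity of $H$ together with the hypothesis that $\mathcal{B}$ is $G$-invariant, using the fact that $H\le G$. First I would fix notation: write $\Omega=V(\Gamma)$, let $H_0,H_1$ be the two $H$-orbits (each of size $|H|$ since $H$ is semiregular), and let $K\le G$ be the setwise stabiliser in $G$ of the set $\{H_0,H_1\}$; since $H$ preserves $H_0$ and $H_1$ individually we have $H\le K$. The key observation for part (1) is that for $B\in\mathcal{B}$ the set $B\cap H_0$ is either empty or a union of $H$-orbits-on-$B$ intersected with $H_0$ — more precisely, I would argue that the map $i\mapsto |B\cap H_i|$ is constant on $\{0,1\}$ whenever it is nonzero on both, and that the alternative is that $B$ lies wholly inside one part. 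Concretely: consider $B\in\mathcal{B}$ and suppose $B\cap H_0\ne\varnothing$ and $B\cap H_1\ne\varnothing$. Pick $x\in B\cap H_0$ and $y\in B\cap H_1$. Since $H$ is transitive on $H_0$ and on $H_1$, for any $x'\in H_0$ there is $h\in H$ with $x^h=x'$; then $B^h\in\mathcal{B}$ and $x'\in B^h\cap H_0$, while also $y^h\in B^h\cap H_1$ (as $h$ preserves $H_1$). Running this over all $x'\in H_0$ shows every block meeting $H_0$ also meets $H_1$. Conversely, if some block $B_0\in\mathcal{B}$ is contained in $H_0$ (i.e.\ $B_0\cap H_1=\varnothing$), then applying elements of $H$ (which cover $H_0$ transitively) shows that the $H$-translates of $B_0$ partition $H_0$, hence every block meeting $H_0$ is contained in $H_0$; and then the remaining blocks — which must cover $H_1$, a single $H$-orbit — are likewise each contained in $H_1$ by the same argument run with $H_1$ in place of $H_0$. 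This is precisely the dichotomy in (1).

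For part (2), suppose $B\in\mathcal{B}$ with $B\cap H_i\ne\varnothing$; I must show $B\cap H_i$ is a block for the action of $H$ on $H_i$. Let $h\in H$. Since $\mathcal{B}$ is $G$-invariant and $h\in H\le G$, we have $B^h\in\mathcal{B}$, so either $B^h=B$ or $B^h\cap B=\varnothing$. Because $h$ stabilises $H_i$ setwise (as $h\in H$ and $H_i$ is an $H$-orbit), intersecting with $H_i$ gives $(B\cap H_i)^h=B^h\cap H_i$, which equals $B\cap H_i$ in the first case and is disjoint from $B\cap H_i$ in the second. Hence $(B\cap H_i)^h$ is either equal to or disjoint from $B\cap H_i$ for every $h\in H$, which is exactly the definition of a block for $H$ on $H_i$. (One should note $B\cap H_i$ is nonempty by assumption, as required for a block.)

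I do not expect a serious obstacle here; this is an elementary orbit-counting argument and the reference \cite{DGJ-2019} already records it. The one point that needs a little care is the bookkeeping in part (1) when one wants to rule out a block straddling the two parts \emph{unevenly} — i.e.\ making sure the "either/or" is genuinely exclusive and exhaustive — and this is handled by the transitivity-of-$H$-on-each-part argument sketched above, which forces any block meeting both $H_0$ and $H_1$ to do so for a whole $\mathcal{B}$-worth of translates, thereby pinning down the global structure. Everything else is a direct unwinding of the definition of a block together with $H\le G$ and the $G$-invariance of $\mathcal{B}$.
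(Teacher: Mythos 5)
Your proof is correct. Note that the paper does not prove this lemma at all --- it is quoted verbatim from \cite[Lemma 3.1]{DGJ-2019} --- so there is no in-paper argument to compare against; your argument is the natural one and both parts are sound: part (2) follows exactly as you say from $G$-invariance of $\mathcal{B}$ together with the fact that each $h\in H$ fixes $H_i$ setwise, and in part (1) the translation argument (moving a block that meets both $H_0$ and $H_1$ by elements of $H$, which act transitively on each part) correctly shows that the two alternatives are exhaustive and mutually exclusive. The auxiliary subgroup $K$ you introduce at the start is never used and could be dropped, and only transitivity of $H$ on each of its two orbits (not semiregularity) is needed, so your proof in fact proves slightly more than the stated lemma requires.
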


The \emph{generalized Petersen graph} $GP(n,r)$  is the graph on the vertex set
$$\{u_0,u_1,\ldots,u_{n-1},v_0,v_1,\ldots,v_{n-1}\}$$
with the adjacencies:
$$u_i\sim u_{i+1}, \, v_i \sim v_{i+r}, \, u_i\sim v_i, \quad i=0,1,\ldots,n-1.$$
Hence each  generalized Petersen graph $GP(n,r)$ has valency 3.
By \cite{FGW-1971},
$GP(n,r)$ is arc-transitive if and only if   $(n,r)=(4,1)$, $(5, 2) $, $(8, 3)$, $ (10, 2),$ $ (10, 3), (12, 5) $  and $  (24, 5)$.


Gross and Tucker \cite{GT-1987} introduced a  combinatorial method  of  cover graphs through a voltage graph.
A \emph{graph homomorphism} $f$ from a graph $\widetilde{X}$ to a graph $X$ is a mapping from the vertex
set $V ( \widetilde{X} )$ to the vertex set $V (X)$ such that if $\{u, v\}\in  E( \widetilde{X} )$ then $\{f(u), f(v)\}\in  E(X)$.
When $f$ is surjective, $X$ is called a \emph{quotient of $\widetilde{X}$}. For $v\in  V (X)$, let $X(v)$ denote the
set of neighbours of $v$ in $X$. A \emph{covering projection} is defined as a graph homomorphism
$p : \widetilde{X} \mapsto X$ which is surjective and locally bijective, which means that the restriction
$p : \widetilde{X}(\widetilde{v}) \mapsto X(v)$ is a bijection, whenever $\widetilde{v}$ is a vertex of $\widetilde{X}$ such that $p(\widetilde{v}) = v \in V (X)$.
We call $X$ the \emph{base graph}, $\widetilde{X}$ a \emph{cover graph} of $X$, and the pre-images
$p^{-1}(v), v\in  V (X)$ the \emph{fibres}. A covering projection $p : \widetilde{X} \mapsto X$ is called \emph{regular} if there
exists a semi-regular subgroup $N$ of the group of automorphisms $\Aut( \widetilde{X} )$ of $\widetilde{X}$ such that
the quotient graph $\widetilde{X}_N$ (with vertices taken as the orbits of $N$ on $V( \widetilde{X} ))$ is isomorphic
to $X$. In that case we call $N$ the \emph{covering (transformation) group}.

The above properties can be exploited to construct regular cover graphs of a given
graph, as follows.

Let $X$ be a connected graph, and let $N$ be a finite group. Suppose $\psi : Arc(X) \mapsto N$ is
a function assigning a group element to each arc of $X$, such that $\psi(v, u) = (\psi(u, v))^{-1}$ for
every arc $(u, v)\in  Arc(X)$. Here $\psi$ is called a \emph{voltage assignment},  the values of $\psi$ are called \emph{voltages}, and $N$ is  the covering
group. In particular, $\psi$ is
called \emph{reduced} if the values of $\psi$ on a spanning tree are trivial (equal to the identity element
of $N$). We may construct a larger graph $Cov(X,\psi)=X\times_{\psi} N$, called the \emph{(derived) voltage graph}
(or \emph{cover graph}), with vertex set $V (X) \times N$ and adjacency defined by $(u, g) \sim  (v, h)$
if and only if $u \sim v$ and $h = g\psi (u, v)$.

\begin{lemma}{\rm (\cite[Lemma 2.6]{DMM-2008}) }\label{2at-dih-lem1}
Let $X\mapsto Y$ be a regular cyclic covering of a connected graph such that some $2$-arc-transitive group $G\leq \Aut(X)$
projects along $X\mapsto Y$. Then there exists a regular prime cyclic covering
$X'\mapsto Y$ such that some $2$-arc-transitive group $G'\leq \Aut(X')$ projects along
$X'\mapsto Y$.

\end{lemma}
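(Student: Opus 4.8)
The plan is to reduce the statement to the case of a \emph{prime} cyclic covering by an iterative argument. Suppose $p : X \mapsto Y$ is a regular cyclic covering with covering transformation group $N \cong \mathbb{Z}_n$, and let $G \leqslant \Aut(X)$ be a $2$-arc-transitive group projecting along $p$. First I would observe that without loss of generality $N \leqslant G$: indeed, since $N$ is a group of deck transformations it normalizes (in fact centralizes, acting as fibre translations) and we may replace $G$ by $\langle G, N\rangle$, which still projects along $p$ and is still $2$-arc-transitive because it contains $G$; the group $\langle G, N \rangle / N \cong G/(G\cap N)$ still acts $2$-arc-transitively on $Y$ since $G$ does and $N$ is contained in the kernel of the action on the fibres-as-blocks. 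Here $N$, being normal in $G$ and having the fibres as its orbits, is in particular an intransitive normal subgroup whose orbit partition is $G$-invariant, so $Y \cong X_N$ in the notation of the preliminaries.

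Next, the key step: if $n$ is not prime, pick a prime $\ell \mid n$ and let $M$ be the unique subgroup of $N$ of index $\ell$, so $M \cong \mathbb{Z}_{n/\ell}$ and $M \trianglelefteq G$ (characteristic in $N \trianglelefteq G$). Set $X' := X_M$, the normal quotient of $X$ by $M$, with the natural quotient map $q : X \mapsto X'$. Since $G$ projects along $p = p' \circ q$ where $p' : X' \mapsto Y$, and $M$ acts semiregularly on $V(X)$ with the fibres of $p$ as unions of $M$-orbits, I would check that $q : X \mapsto X'$ and $p' : X' \mapsto Y$ are both regular cyclic coverings, with covering groups $M \cong \mathbb{Z}_{n/\ell}$ and $N/M \cong \mathbb{Z}_\ell$ respectively — this is where one uses that $X$ is a \emph{cover} of $Y$ (each vertex has exactly one neighbour in each adjacent fibre), which passes down to the intermediate quotient; local bijectivity of $q$ and $p'$ follows from that of $p$ together with the semiregularity of $M$ and $N/M$. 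The induced group $G/M$ acts faithfully-or-not on $V(X')$, is $2$-arc-transitive on $X'$ (the kernel $M$ of the quotient action is contained in the vertex-stabilizer arithmetic so $2$-arc-transitivity is inherited by the quotient — more precisely $G$ is transitive on $2$-arcs of $X$ and $q$ maps $2$-arcs of $X$ onto $2$-arcs of $X'$ since $q$ is a covering projection), and it projects along $p' : X' \mapsto Y$. Thus $X' \mapsto Y$ is a regular cyclic covering with covering group of order $n/\ell < n$ admitting a projecting $2$-arc-transitive group.

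Now I would argue by induction on $n$: if $n$ is prime we are done with $X' = X$; otherwise the previous paragraph produces $X' \mapsto Y$ with strictly smaller covering group, and by the induction hypothesis there is a regular \emph{prime} cyclic covering $X'' \mapsto Y$ with a projecting $2$-arc-transitive group $G'' \leqslant \Aut(X'')$, which is the desired conclusion. (For the base case $n = 1$ the covering is trivial and there is nothing to prove, or one takes any prime cyclic quotient further down; the statement is vacuous there, so the genuine base case is $n$ prime.)

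I expect the main obstacle to be the verification that $2$-arc-transitivity genuinely descends to the intermediate quotient $X'$ and that $q$ and $p'$ are honest covering projections rather than merely quotient maps. The delicate point is that a quotient of a $2$-arc-transitive graph by a normal subgroup need not in general be $2$-arc-transitive nor a cover — this fails precisely when the subgroup is \emph{not} semiregular or the original graph is not a cover of the quotient. Here both conditions hold: $M$ is semiregular (as a subgroup of the semiregular $N$) and $X$ is assumed to be a cover of $Y$, which forces $X$ to be a cover of $X'$ as well. One must spell out that a covering projection maps $s$-arcs to $s$-arcs bijectively on fibres, so the $G$-action on $2$-arcs of $X$ projects onto a transitive $G/M$-action on $2$-arcs of $X'$; this is the heart of the argument and is where the hypotheses are really used. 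The rest is bookkeeping with the subgroup lattice of the cyclic group $N$.
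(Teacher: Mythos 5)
Your reduction is essentially the correct and standard one; note that the paper does not prove Lemma \ref{2at-dih-lem1} at all but imports it from \cite[Lemma 2.6]{DMM-2008}, and the quotient-and-factorize mechanism you use is precisely the one the paper itself carries out (for characteristic subgroups of a metacyclic covering group) in Lemma \ref{lem-index2-norm1}, so your route matches the intended proof. The core verifications you flag are indeed the right ones and go through: $M$ is semiregular, $X'=X_M$ is connected, $q:X\mapsto X'$ and $p':X'\mapsto Y$ inherit the one-neighbour-per-fibre property from $p$, and since $q$ sends $2$-arcs of $X$ to $2$-arcs of $X'$ (two endpoints of a $2$-arc lying in one $M$-orbit would violate the covering property at the middle vertex) while every $2$-arc of $X'$ lifts, transitivity of $G$ on $2$-arcs of $X$ descends to $G/M$ on $X'$.

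One internal slip worth fixing: with your choice of $M$ as the subgroup of $N$ of \emph{index} $\ell$, the covering $X'\mapsto Y$ has covering group $N/M\cong \mathbb{Z}_\ell$, which is already of prime order, so the proof terminates in a single step; your closing claim that this covering has group of order $n/\ell$, and the induction built on it, is inconsistent with that choice. The induction formulation would be the right bookkeeping only if you instead quotiented by the subgroup of \emph{order} $\ell$. Either variant gives a correct proof, so this is a matter of tidying the statement rather than a gap.
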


The following fact is  well known, and can be easily proved.





\begin{lemma}{\rm (\cite[Proposition 2.2]{DKX-2005},\cite[Proposition 2.3]{DMM-2008}) }\label{lem-cent-1}
Let $K$ be a finite group, let $X = Y \times_f K$ be a connected regular cover of
a graph $Y$ derived from a voltage assignment $f$ with the voltage group $K$, and let the lifts
of $\alpha \in  \Aut(Y)$ centralize $K$, considered as the group of covering transformations. Then for any
closed walk $W$ in $Y$, there exists $k\in  K$ such that $f(W^\alpha) = kf(W)k^{-1}$. In particular, if $K$ is abelian,
$f(W^\alpha) = f (W )$ for any closed walk $W$ of $Y$.

\end{lemma}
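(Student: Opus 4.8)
\emph{Proof proposal.} The plan is to exploit the unique walk-lifting property of the regular cover $X = Y\times_f K$ together with the hypothesis that some lift $\tilde\alpha$ of $\alpha$ commutes with every covering transformation. First I would record how $K$ sits inside $\Aut(X)$ as the group of covering transformations: for the adjacency convention $(u,g)\sim(v,h)\iff u\sim v$ and $h=g\,f(u,v)$, the left translations $\hat k\colon (u,g)\mapsto(u,kg)$ are automorphisms of $X$ fixing every fibre setwise, and $k\mapsto\hat k$ identifies $K$ with the covering transformation group. Next I would state the elementary lifting fact: for a walk $W=(v_0,v_1,\dots,v_n)$ in $Y$ and any $g\in K$, the unique walk of $X$ starting at $(v_0,g)$ that projects to $W$ is $\big((v_0,g),(v_1,g\,f(v_0,v_1)),\dots,(v_n,g\,f(W))\big)$, where $f(W)=f(v_0,v_1)f(v_1,v_2)\cdots f(v_{n-1},v_n)$; in particular it terminates at $(v_n,g\,f(W))$.

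With these in hand, fix a lift $\tilde\alpha\in\Aut(X)$ of $\alpha$, so $p\tilde\alpha=\alpha p$ for the projection $p\colon X\to Y$. Let $W$ be a closed walk at a vertex $u$ of $Y$ and write $\tilde\alpha(u,1)=(u^\alpha,c)$ for the appropriate $c\in K$. Lifting $W$ from $(u,1)$ gives a walk $\tilde W$ ending at $(u,f(W))$. Now apply $\tilde\alpha$ to $\tilde W$ and compute its terminal vertex in two ways. On one side, $\tilde\alpha(\tilde W)$ projects to $W^\alpha$ and starts at $(u^\alpha,c)$, so by uniqueness of lifts (applied to the closed walk $W^\alpha$ at $u^\alpha$) it ends at $(u^\alpha,c\,f(W^\alpha))$. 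On the other side, $\tilde\alpha(\tilde W)$ ends at $\tilde\alpha(u,f(W))=\tilde\alpha\,\hat{f(W)}\,(u,1)=\hat{f(W)}\,\tilde\alpha(u,1)=\hat{f(W)}(u^\alpha,c)=(u^\alpha,f(W)\,c)$, where the middle equality uses that $\tilde\alpha$ centralizes the covering transformation group. Comparing $K$-coordinates gives $c\,f(W^\alpha)=f(W)\,c$, i.e.\ $f(W^\alpha)=c^{-1}f(W)\,c$, so $k=c^{-1}$ does the job; when $K$ is abelian this reads $f(W^\alpha)=f(W)$.

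I do not expect a genuine obstacle here; the two points requiring care are pinning down the side on which the covering transformations act (so that each $\hat k$ really is an automorphism --- it is \emph{left} translation under the stated adjacency rule, since right translation would force $f(u,v)$ to be central) and noting that the conjugating element $c=c(u)$ depends on the chosen base vertex $u$ but not on $W$. This is precisely why the natural formulation is ``there exists $k\in K$'' rather than a single global $k$, and it is exactly the abelian consequence --- invariance of the voltages of closed walks under $\alpha$ --- that will be used later in the paper.
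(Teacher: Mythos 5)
Your proof is correct: the paper itself gives no argument for this lemma (it is quoted from \cite[Proposition 2.2]{DKX-2005} and \cite[Proposition 2.3]{DMM-2008} as a well-known fact), and your computation via unique lifting of the closed walk from $(u,1)$, combined with $\tilde\alpha\,\hat{k}=\hat{k}\,\tilde\alpha$ for the covering transformations $\hat k\colon(u,g)\mapsto(u,kg)$, is exactly the standard argument in those sources, yielding $c\,f(W^\alpha)=f(W)\,c$ and hence the claim. Your side remarks (that the covering group acts by left translation under the adjacency convention $h=g\,f(u,v)$, and that the conjugating element depends only on the base vertex and chosen lift, not on $W$) are accurate and harmless.
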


The \emph{cycle space} of a graph $\Gamma$ is the subspace of the vector space
generated by all simple cycles of $\Gamma$. The following lemma is obvious, see for instance \cite[Lemma 2.5]{DMM-2008}.

\begin{lemma}\label{lem-vanish}
Let $\Gamma$ be a graph and let $\mathcal{C}$ be a set of cycles of $\Gamma$ spanning the cycle space of $\Gamma$. If
$\overline{\Gamma}$ is a  cover graph of $\Gamma$ given by a voltage assignment $f$ for which each $C\in \mathcal{C}$ vanishes, then $\overline{\Gamma}$ is disconnected.

\end{lemma}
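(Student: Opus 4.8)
\textbf{Proof proposal for Lemma~\ref{lem-vanish}.}

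The plan is to argue by the standard voltage-graph connectivity criterion: the connected components of the derived cover $\overline{\Gamma}=\Gamma\times_f K$ are in bijection with the cosets of the subgroup of $K$ generated by the voltages of all closed walks based at a fixed vertex. First I would fix a spanning tree $T$ of $\Gamma$ and a base vertex $v_0$, and recall that for any vertex $v$ there is a unique $T$-path from $v_0$ to $v$; concatenating such a $T$-path, an edge, and the reverse $T$-path yields a fundamental cycle, and the fundamental cycles (one per cotree edge) form a basis of the cycle space of $\Gamma$. The key algebraic fact I would invoke is that the set $\{\,f(W)\mid W \text{ a closed walk at } v_0\,\}$ is a subgroup $K_0\le K$, and that $\overline{\Gamma}$ is connected if and only if $K_0=K$; more precisely, the component of $\overline{\Gamma}$ containing $(v_0,1)$ has vertex set exactly $\{(v,k):k\in K_0\cdot f(\text{some walk }v_0\to v)\}$, so the number of components equals $[K:K_0]$.

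Next I would show that $K_0$ is generated by the voltages $\{f(C):C\in\mathcal{C}\}$ together with conjugates thereof. Any closed walk at $v_0$ decomposes, after inserting and deleting $T$-paths, into a product of conjugates of fundamental cycles; since each fundamental cycle lies in the cycle space, which by hypothesis is spanned by $\mathcal{C}$, and since voltages behave multiplicatively along concatenation and invert along reversal, $f$ of any fundamental cycle lies in the subgroup generated by $\{f(C):C\in\mathcal C\}$ and their $K$-conjugates. Hence $K_0$ is contained in the normal closure of $\langle f(C):C\in\mathcal C\rangle$ in $K$. Now the hypothesis that each $C\in\mathcal C$ \emph{vanishes}, i.e.\ $f(C)=1$ for all $C\in\mathcal C$, forces this normal closure to be trivial, so $K_0=1$. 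Therefore $\overline{\Gamma}$ has $|K|$ components, and since we may assume $|K|>1$ (otherwise the cover is trivial and the statement is vacuous or interpreted as $\overline\Gamma=\Gamma$), $\overline{\Gamma}$ is disconnected.

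The only genuine subtlety is the reduction from ``closed walks at $v_0$'' to ``members of $\mathcal C$'': one must be careful that the cycle-space span is a $\mathbb{Z}/2$ (or $\mathbb Z$) linear statement about edge sets, whereas voltages live in a possibly nonabelian group $K$, so ``spanning the cycle space'' only controls $f(C)$ up to commutators and conjugation. This is exactly why the conclusion is merely disconnectedness and not, say, a computation of the component count in general; but for our purposes triviality of all the $f(C)$ kills the whole normal closure regardless of these commutator issues, so the argument goes through cleanly. I would also note that connectivity of the base graph $\Gamma$ is used implicitly to guarantee the spanning tree exists and that every vertex is reachable from $v_0$. This is the main (and only) point requiring care; the rest is a routine unwinding of the definition of $Cov(X,\psi)$ given above.
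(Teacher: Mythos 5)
The paper itself gives no proof of this lemma: it is declared ``obvious'' and attributed to \cite[Lemma 2.5]{DMM-2008}, so your proposal has to be judged against the standard folklore argument, whose overall shape (components of $Cov(\Gamma,f)$ correspond to cosets of the subgroup $K_0\le K$ of voltages of closed walks at a base vertex; reduce closed walks to fundamental cycles via a spanning tree) you have correctly reproduced. The genuine gap is the step ``hence $K_0$ is contained in the normal closure of $\langle f(C):C\in\mathcal{C}\rangle$.'' Spanning the cycle space is a statement about the \emph{abelianization} of $\pi_1(\Gamma,v_0)$ (and, if the cycle space is taken over $GF(2)$, only about the abelianization modulo squares). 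Knowing that a fundamental cycle is a linear combination of members of $\mathcal{C}$ does not express the corresponding closed walk, as an element of the free group $\pi_1(\Gamma,v_0)$, as a product of conjugates of the walks around cycles of $\mathcal{C}$; it does so only modulo commutators. Consequently, from $f(C)=1$ for all $C\in\mathcal{C}$ the decomposition you invoke yields only $K_0=[K_0,K_0]$ (respectively $K_0=[K_0,K_0]K_0^2$ in the $GF(2)$ reading), i.e.\ that $K_0$ is perfect, not that $K_0=1$. Your closing remark that ``triviality of all the $f(C)$ kills the whole normal closure regardless of these commutator issues'' is exactly where the argument breaks for nonabelian $K$: killing the normal closure does not touch the commutator factor, so disconnectedness does not follow from your computation when $K_0$ could be perfect.

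The argument does become correct once $K$ is abelian, because then the voltage map factors through $H_1(\Gamma;\mathbb{Z})$, commutators vanish, and $K_0$ perfect and abelian forces $K_0=1$; this covers every application in the present paper, where the covering group is cyclic ($\mathbb{Z}_p$ or $\mathbb{Z}_d$). Even there one should read ``spanning the cycle space'' integrally rather than over $GF(2)$: a set of cycles spanning only mod $2$ generates, a priori, a sublattice of odd index in the integral cycle lattice, and a $\mathbb{Z}_p$-voltage ($p$ odd) vanishing on that sublattice need not vanish on all closed walks. In the paper's uses (all $4$-cycles of $\K_{n,n}$, the $4$- and $6$-cycles of the incidence graphs of $\PG(d-1,q)$) the chosen cycles do generate the integral cycle space, so the lemma as applied is sound; but to make your proof a proof of the lemma as stated you must either assume $K$ abelian and integral spanning, or strengthen the hypothesis to say that the walks around the cycles of $\mathcal{C}$ normally generate $\pi_1(\Gamma,v_0)$.
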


\subsection{The graphs appearing in Theorem \ref{th-quot-miniblock-1}}

For integers $d,r\geq 2$, the    \emph{Hamming graph}  $\H(d,r)$  has
vertex set $\Delta^d=\{(x_1,x_2,\ldots,x_d)|x_i\in \Delta\}$, where
$\Delta=\{0,1,\ldots,r-1\}$, and  two vertices $v$ and $v'$ are adjacent if and only if they are different
in exactly one coordinate.  The    Hamming graph  $\H(d,2)$ is  called a \emph{$d$-cube}, and a \emph{folded $d$-cube} is the quotient of $\H(d,2)$ by the partition into blocks of size two given by pairs of vertices at distance $d$.
Moreover, the  folded $5$-cube is also a strongly regular graph with parameters $(16,5,0,2)$), automorphism group $\mathbb{Z}_2^4:S_5$.
 The complement of the folded $5$-cube is the \emph{Clebsch graph}.

The \emph{dodecahedron} has both girth  and diameter 5,   automorphism group $A_5\times \mathbb{Z}_2$ and intersection array  $(3,2,1,1,1;1,1,1,2,3)$, see \cite[p.1]{BCN} and \cite{Frucht-1};
it is 2-arc-transitive.


Let $p$ be an odd prime and let $r$ be a positive   integer  dividing $p-1$. Let $A$ and $A'$ denote two disjoint copies of $\mathbb{Z}_p$ and denote the corresponding elements of $A$ and $A'$ by $i$ and $i'$, respectively.
Let $L(p,r)$ be the unique subgroup of the multiplicative group of $\mathbb{Z}_p$ of order $r$.   We define
two graphs,  $G(2p,r)$ and $G(2,p,r)$, with vertex set $A\cup A'$.  The graph $G(2p,r)$ has edge set $\{\{x,y'\}|x,y\in \mathbb{Z}_p,y-x\in L(p,r)\},$ while  the graph $G(2,p,r)$ (defined only for $r$ even) has  edge set $\{\{x,y\},\{x',y\},\{x,y'\},\{x',y'\}|x,y\in \mathbb{Z}_p,y-x\in L(p,r)\}.$
Note that $G(2p,r)$ is bipartite.

For each integer $d\geq 3$ and prime power $q$, let $B(\PG(d-1,q))$ be the bipartite graph with vertices the 1-dimensional and $(d-1)$-dimensional subspaces of a $d$-dimensional vector space over $GF(q)$, and two subspaces are adjacent if and only if one is contained in the other. We denote the bipartite complement of $B(\PG(d-1,q))$ by $B'(\PG(d-1,q))$, that is, the bipartite graph with the same vertex set but a 1-subspace and a $(d-1)$-subspace  are adjacent if and only if their intersection is the zero subspace.

We define $B'(H(11))$ to be the bipartite graph with vertices the elements of $\mathbb{Z}_{11}$ and the sets $R+i$, where $i\in\mathbb{Z}_{11}$ and
$R=\{1,3,4,5,9\}$, that is, the set of non-zero quadratic residues modulo 11,  such that $n\in\mathbb{Z}_{11}$  is adjacent to $R+i$ if and only if $n\notin R+i$. Denote the bipartite complement of $B'(H(11))$ by  $B(H(11))$.  We note that  $B(H(11))$ is isomorphic to $G(22,5)$, see for instance \cite[p.200]{CO-1987}.

Let $q$ be a prime power such that $q\equiv 3\pmod{4}$ and let
$F(q)=GF(q)$,  $S(q)$ be the set of all non-zero squares of $F(q)$
and $N(q)$ be the set of  non-zero non-squares of $F(q)$.
Let $Y=\K_{q+1}$, and   identify the vertices of $Y$ with the projective lines
$\PG(1, q)=F(q)\cup \{\infty\}$ (see \cite[p.283,285]{DMW-1998}).
 We define $X_1(4, q)$ to be the 4-fold
cover $Cov(Y, f )$, where the voltage $f: Arc(Y)\mapsto \mathbb{Z}_4$ is defined with the following
rule:

\begin{center}
$f(x,y): = \left\{
\begin{array}{lll}
0, & \mbox{if $\infty \in\{x,y\} $};\\
1, & \mbox{if $ y-x \in S(q)$};\\
3, & \mbox{if $y-x \in N(q)$}.\end{array} \right.$
\end{center}

If $q=3$, then   $X_1(4, q)$ is the generalized Petersen graph $GP(8,3)$, and is also known as the M\"obius-Kantor graph.

\begin{lemma}\label{lem-cov-comp-1}
Let $q$ be a prime power such that $q\equiv 3\pmod{4}$. Then the graph  $X_1(4, q)$  is a $2$-arc-transitive bicirculant.
\end{lemma}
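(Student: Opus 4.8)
The plan is to verify the two assertions of the lemma separately: first that $X_1(4,q)$ is a bicirculant, and second that it is $2$-arc-transitive. For the bicirculant claim, I would exhibit an automorphism of $X_1(4,q)$ of order $2(q+1)$... no — rather, recall that $X_1(4,q)$ is by construction the derived voltage graph $Cov(Y,f)$ with $Y=\K_{q+1}$, whose vertices are identified with $\PG(1,q)$, and with voltage group $\mathbb{Z}_4$. Thus $|V(X_1(4,q))| = 4(q+1)$. To see it is a bicirculant I would look for a cyclic group of order $2(q+1)$ acting semiregularly on $V(X_1(4,q))$ with exactly two orbits. A natural candidate comes from combining the affine action $x\mapsto cx$ (for $c$ a generator of $F(q)^\times$, which permutes $F(q)\cup\{\infty\}$ with a fixed point at $0$ and at $\infty$) with the deck transformation group $\mathbb{Z}_4$; more robustly, one takes an element $z$ of $\PGL(2,q)$ of order $q+1$ (a Singer-type element acting as a single $(q+1)$-cycle on $\PG(1,q)$) which lifts to an automorphism $\tilde z$ of $Cov(Y,f)$, and then shows that an appropriate power or product of $\tilde z$ with a deck transformation has order $2(q+1)$ and two vertex-orbits. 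Since $4(q+1)/(2(q+1)) = 2$, two orbits of size $2(q+1)$ is exactly what a bicirculant requires. I would need Lemma~\ref{lem-cent-1} (or a direct check that $z$ preserves the voltages up to the relevant equivalence, using $q\equiv 3\pmod 4$ so that $-1\in N(q)$ and multiplication by squares fixes $S(q),N(q)$ setwise) to guarantee that $z$ indeed lifts.

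For the $2$-arc-transitive claim I would use the standard lifting machinery for regular covers. The base graph $Y=\K_{q+1}$ is $2$-arc-transitive (indeed $3$-arc-transitive) under $\PGL(2,q)$ acting on $\PG(1,q)$. The key point is that enough of this automorphism group lifts to $X_1(4,q)=Cov(Y,f)$. Concretely, I would verify: (i) the stabilizer in $\PGL(2,q)$ of the arc $(\infty,0)$ — which contains the maps $x\mapsto cx$ for $c\in F(q)^\times$ — preserves the voltage assignment $f$ up to the equivalence that permits lifting (here $x\mapsto cx$ sends an arc $(x,y)$ with $y-x\in S(q)$ to $(cx,cy)$ with $cy-cx = c(y-x)$, which lies in $S(q)$ iff $c\in S(q)$ and in $N(q)$ iff $c\in N(q)$; composing with the automorphism $t\mapsto -t$ of $\mathbb{Z}_4$ handles the case $c\in N(q)$ since $3 = -1$ in $\mathbb{Z}_4$ and $q\equiv 3\pmod 4$ ensures $-S(q)=N(q)$); (ii) the map $x\mapsto x+1$ (translation) obviously preserves $f$ since $f$ depends only on differences and on incidence with $\infty$; (iii) the inversion $x\mapsto 1/x$ (or $x\mapsto -1/x$), which together with the translations and scalings generates $\PGL(2,q)$ — this is the delicate one and requires checking that the voltage of each fundamental cycle is preserved, which is exactly where the choice of $f$ on $S(q)$ versus $N(q)$ (values $1$ and $3$, negatives of each other) and the congruence $q\equiv 3\pmod 4$ become essential. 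Having shown that a $2$-arc-transitive subgroup of $\Aut(Y)$ lifts, the lifted group is $2$-arc-transitive on $X_1(4,q)$: covering projections preserve arcs, local bijectivity lets us lift $2$-arcs from $Y$ to $X_1(4,q)$, and transitivity on $2$-arcs of $X_1(4,q)$ follows from transitivity of the lifted group on fibres (ensured by connectedness of $X_1(4,q)$, itself checked via Lemma~\ref{lem-vanish}: the voltages do not all vanish on a cycle basis) together with $2$-arc-transitivity on $Y$.

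I expect the main obstacle to be step (iii): showing the inversion lifts. This amounts to a finite-field identity — that the total voltage of the image under $x\mapsto 1/x$ of each cycle in a chosen spanning set of the cycle space of $\K_{q+1}$ equals (up to conjugation, but $\mathbb{Z}_4$ is abelian so equals exactly, by Lemma~\ref{lem-cent-1}) the total voltage of the original cycle. Using a reduced voltage assignment relative to a spanning tree (say the star at $\infty$), the fundamental cycles are triangles $\{\infty, x, y\}$ and triangles $\{x,y,z\}$ in $F(q)$, so the condition reduces to statements like: for $x,y\in F(q)^\times$, the quantity $f(\infty,x)+f(x,y)+f(y,\infty)$ is invariant under $(x,y)\mapsto(1/x,1/y)$, i.e.\ that $y-x$ and $1/x - 1/y = (y-x)/(xy)$ lie in the same square class, which holds precisely when $xy\in S(q)$ — and one must sum/account over all triangles, using that $-1\in N(q)$. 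These are routine but must be organized carefully; I would handle them via the square-class character (the quadratic residue symbol) rather than case analysis. Once this verification is complete, the lemma follows by assembling the bicirculant structure from the first paragraph with the $2$-arc-transitive lifted group from the second.
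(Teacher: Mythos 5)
Your route is genuinely different from the paper's. The paper disposes of the lemma in three lines by citing Du--Maru\v{s}i\v{c}--Waller \cite{DMW-1998}: their Proposition 3.2 already asserts that $X_1(4,q)$ is a $2$-arc-transitive \emph{Cayley} graph of the group $G=\langle r,t\mid r^4=1,\ t^{q+1}=r^2,\ t^{-1}rt=r^{-1}\rangle$, and then the bicirculant property is immediate, since $\langle t\rangle\cong\mathbb{Z}_{2(q+1)}$ has index $2$ in a group acting regularly on the $4(q+1)$ vertices, hence is semiregular with exactly two orbits. You instead propose to re-derive both ingredients from scratch with the voltage-lifting machinery: lift a regular cyclic subgroup of order $q+1$ of $\PGL(2,q)$ to get the bicirculant structure, and lift a full $2$-arc-transitive subgroup of $\Aut(\K_{q+1})$ (translations, scalings, inversion) to get $2$-arc-transitivity. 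That plan is viable -- it is essentially a reproduction of the computations in \cite{DMW-1998} -- and it buys self-containedness, at the cost of the nontrivial verifications you yourself flag (your step (iii), the lifting of $x\mapsto 1/x$, is exactly the substance of the cited proposition). One caution on the bicirculant half, where your sketch underestimates what must be proved: it is not enough that the Singer-type element $z$ lifts, because ``an appropriate power or product of $\tilde z$ with a deck transformation has order $2(q+1)$'' depends on the precise structure of the preimage of $\langle z\rangle$. Since $q\equiv 3\pmod 4$ forces $4\mid q+1$, a preimage of the form $\mathbb{Z}_4\times\mathbb{Z}_{q+1}$ would have exponent $q+1$ and contain \emph{no} element of order $2(q+1)$; so you must actually establish that some lift $\tilde z$ inverts the deck group and satisfies $\tilde z^{\,q+1}=r^2$ -- precisely the relations the paper imports wholesale from \cite{DMW-1998}. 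With that extension structure pinned down (and connectedness checked via the non-vanishing of voltages on a cycle basis, as you indicate), your argument closes and proves the same statement.
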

\proof  By \cite[p.283,285,288]{DMW-1998} and \cite[Proposition 3.2]{DMW-1998}, we know that $X_1(4, q)$ is a $2$-arc-transitive Cayley graph of the group
$G=\langle r,t|r^4=1,t^{q+1}=r^2,t^{-1}rt=r^{-1}\rangle$. Thus  $t$ is an order $2(q+1)$ element of $G$ that  has exactly two orbits of size $2(q+1)$
on the set of vertices of $X_1(4, q)$.
Hence $X_1(4, q)$ is a bicirculant of $T=\langle t\rangle\cong \mathbb{Z}_{2(q+1)}$.
\qed

\medskip

Let $\Gamma=\K_{5,5}-5\K_2$, where
$V(\Gamma)=\{1,2,3,4,5\}\cup\{1',2',3',4',5'\}$,
and $E(\Gamma)=\{ij'|i\neq j,i,j'\in V(\Gamma)\}$.

Define $X_2(3)=\Gamma \times_f \mathbb{Z}_3$, with the voltage assignment
$f:Arc(\Gamma)\mapsto \mathbb{Z}_3$ such that
$$f_{1,2'}=f_{1,3'}=f_{1,4'}=f_{1,5'}=f_{2,1'}=f_{2,3'}=f_{3,1'}$$
$$=f_{3,2'}=f_{4,1'}=f_{4,5'}=f_{5,1'}=f_{5,4'}=0,$$
$$f_{2,5'}=f_{3,4'}=f_{4,3'}=f_{5,2'}=1,$$
$$f_{2,4'}=f_{3,5'}=f_{4,2'}=f_{5,3'}=2.$$

\begin{lemma}\label{lem-cov-k55}
The graph  $X_2(3)$   is a $2$-arc-transitive bicirculant.

\end{lemma}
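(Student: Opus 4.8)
The plan is to follow the same two-step strategy used for $X_1(4,q)$ in Lemma \ref{lem-cov-comp-1}: first exhibit $X_2(3)$ as a $2$-arc-transitive graph, then locate inside its automorphism group a semiregular cyclic subgroup of order half the number of vertices. Here $X_2(3)=\Gamma\times_f\mathbb Z_3$ with $\Gamma=\K_{5,5}-5\K_2$, so $X_2(3)$ has $30$ vertices and valency $4$, and we must produce an element of order $15$ with exactly two orbits on the vertex set. I would first verify connectedness of the cover, using Lemma \ref{lem-vanish}: the $4$-cycles $(1,2',i,j',1)$ together with a spanning set of cycles of $\Gamma$ generate the cycle space, and the listed voltages do not all vanish on this set (for instance the $4$-cycle $1\to 2'\to 4\to 3'\to 1$ carries net voltage $0-1+2-0=1\neq 0$), so $X_2(3)$ is connected.

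Next I would establish $2$-arc-transitivity. The base graph $\Gamma=\K_{5,5}-5\K_2$ is itself $2$-arc-transitive with $\Aut(\Gamma)\cong S_5\times\mathbb Z_2$ (the $S_5$ permuting the index set $\{1,\dots,5\}$ simultaneously on both parts, the $\mathbb Z_2$ swapping primed and unprimed). The key point is that the voltage assignment $f$ has been rigged so that a large subgroup of $\Aut(\Gamma)$ lifts: by Lemma \ref{lem-cent-1}, since $\mathbb Z_3$ is abelian, an automorphism $\alpha$ of $\Gamma$ lifts provided $f(W^\alpha)=f(W)$ for every closed walk $W$, equivalently for a cycle-space basis. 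One checks directly that the $4$-cycle voltages are preserved by the subgroup generated by the $5$-cycle $\rho=(1\,2\,3\,4\,5)$ (which permutes the voltage pattern $\{0,0,0,0,1,2,\dots\}$ cyclically) and by a suitable involution; in fact the stabilizer-of-a-vertex part of the acting group should already be transitive on the $2$-arcs through that vertex, because $\Aut(\Gamma)_1$ acts as $S_4$ on the four neighbours $2',3',4',5'$ of vertex $1$ and a subgroup of this lifts faithfully, giving a $(G',2)$-arc-transitive group $G'\leq\Aut(X_2(3))$ where $G'$ is an extension of $\mathbb Z_3$ by the relevant subgroup of $S_5\times\mathbb Z_2$.

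Finally, for the bicirculant property I would exhibit the cyclic group explicitly. The element $\rho=(1\,2\,3\,4\,5)(1'\,2'\,3'\,4'\,5')$ of $\Aut(\Gamma)$ lifts to an automorphism $\tilde\rho$ of $X_2(3)$ of order $5$ or $15$; composing with a generator $\zeta$ of the covering group $\mathbb Z_3$ (which is semiregular of order $3$ with $10$ orbits) gives an element $g=\tilde\rho\,\zeta$ whose order is $15$. Since $\zeta$ acts within fibres and $\rho$ fixes the bipartition of $\Gamma$, the element $g$ preserves each of the two sets $\{1,\dots,5\}\times\mathbb Z_3$ and $\{1',\dots,5'\}\times\mathbb Z_3$ (each of size $15$), and a direct check that $g$ acts on each of these $15$-element sets as a single $15$-cycle — equivalently that $g$ is fixed-point-free there and its fifth power equals a nontrivial power of $\zeta$ — shows that $\langle g\rangle\cong\mathbb Z_{15}$ has exactly two orbits, both of size $15$, on $V(X_2(3))$. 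Hence $X_2(3)$ is a bicirculant over $\mathbb Z_{15}$, completing the proof. The main obstacle is the bookkeeping in the lifting step: one must confirm that $\rho$ (and enough of the vertex stabilizer) genuinely centralizes $\mathbb Z_3$ on the cycle space, and that the lift of $\rho$ can be chosen so that $g=\tilde\rho\zeta$ has order exactly $15$ rather than $5$; this is a finite verification on the explicit voltage data but is the one place where the specific numbers $f_{i,j'}\in\{0,1,2\}$ are used in an essential way.
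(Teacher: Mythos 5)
Your bicirculant step is essentially the paper's argument (lift the $5$-cycle $\rho$ that is regular on the two halves of $\K_{5,5}-5\K_2$ and combine it with the covering group $\mathbb{Z}_3$ to get a semiregular $\mathbb{Z}_{15}$ with two orbits of size $15$), but your $2$-arc-transitivity step has a genuine gap. You propose to prove $2$-arc-transitivity of $X_2(3)$ by verifying that a $2$-arc-transitive subgroup of $\Aut(\K_{5,5}-5\K_2)=S_5\times S_2$ lifts, yet the verification is only asserted (``one checks directly'', ``a suitable involution'', ``should already be transitive on the $2$-arcs''); this check is precisely the content of the claim, so as written nothing is proved. Moreover, the tool you invoke does not do what you need: Lemma \ref{lem-cent-1} says that \emph{if} the lifts of $\alpha$ centralize the covering group \emph{then} voltages of closed walks are preserved (up to conjugacy); it is not a lifting criterion. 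To run your argument you would have to import the standard lifting criterion (an automorphism of the base graph lifts if and only if it maps closed walks with trivial voltage to closed walks with trivial voltage), fix a generating set of, say, $A_5\times S_2$ and a cycle-space basis of $\K_{5,5}-5\K_2$, and actually carry out the computation. The paper avoids all of this: it takes from \cite{XD-2014} that $X_2(3)$ is a $2$-arc-transitive cyclic cover of $\K_{5,5}-5\K_2$ (that is where the graph and its lifted group come from) and only proves the bicirculant property, using that the lift $\widetilde W$ of the socle $W\cong A_5$ must centralize $N\cong\mathbb{Z}_3$ because $A_5$ is simple and $\Aut(\mathbb{Z}_3)$ is abelian.

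Two smaller points. First, your worry at the end --- whether the lift of $\rho$ can be chosen so that $g=\tilde\rho\zeta$ has order exactly $15$, and the ``direct check'' on the explicit voltages --- is unnecessary: once $\rho$ lifts, the full preimage $\widetilde M$ of $\langle\rho\rangle$ has order $15$, it is transitive on each $15$-vertex half (since $\mathbb{Z}_3$ is transitive on fibres and $\rho$ is transitive on the five fibres in each half), hence regular there, and every group of order $15$ is cyclic; so $\widetilde M\cong\mathbb{Z}_{15}$ is semiregular with two orbits of size $15$ without any computation with the voltage table (the paper's centralizer argument is the same point, phrased so that it also covers Lemma \ref{lem-cov-comp-minus1}). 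Second, the connectedness check is not part of the statement and in any case Lemma \ref{lem-vanish} only gives the implication ``all voltages vanish $\Rightarrow$ disconnected''; also your sample $4$-cycle voltage is miscomputed ($f_{4,2'}=2$, $f_{4,3'}=1$), though the nonvanishing conclusion still holds. The decisive issue remains the unproved lifting/2-arc-transitivity step.
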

\proof Let $\Gamma:=\K_{5,5}-5\K_2$. Then $G:=\Aut(\Gamma)=S_5\times S_2$.  Let
$G^+$ be the maximal subgroup of $\Aut(\Gamma)$ that fixes the two bipartite halves of $\Gamma$ setwisely. Then $|G:G^+|=2$.
By  \cite[Theorem 1.1]{XD-2014},  $G$ is transitive on the set of 2-arcs of $\Gamma$, and so $G^+$ is 2-transitive on both
the two bipartite halves. Thus we must have   $A_5\leq G^+\leq S_5$.
Let $W$ be the socle of $G^+$. Then $W\cong A_5$.

Let $N$ be the covering group and let   $\widetilde{W}$ be the lift of $W$. Then $N\cong \mathbb{Z}_3$ and $\widetilde{W}/N\cong W$, and so  $N\unlhd C_{\widetilde{W}}(N)$ as $N$ is cyclic.
It follows that  $C_{\widetilde{W}}(N)/N\lhd \widetilde{W}/N\cong A_5$, and hence $C_{\widetilde{W}}(N)/N$ is isomorphic to either  1 or  $A_5$.
Since $(\widetilde{W}/N)/(C_{\widetilde{W}}(N)/N)\cong \widetilde{W}/C_{\widetilde{W}}(N)$ is isomorphic to a subgroup of $\Aut(N)$
that is abelian, we know that
$C_{\widetilde{W}}(N)/N$ is not 1, and so
$C_{\widetilde{W}}(N)/N$ is isomorphic to  $W$. Thus
$C_{\widetilde{W}}(N)=\widetilde{W}$ and $N\leq Z(\widetilde{W})$.

Note that $A_5$  has an element $\alpha$ of order $5$ that is regular on
each bipartite half of $\Gamma$.
Let $M:=\langle \alpha \rangle\cong \mathbb{Z}_{5}$ and let  $\widetilde{M}$ be the lift of $M$. Then $M\leq W$, $\widetilde{M}\leq \widetilde{W}$ and $\widetilde{M}/N\cong M$.
Since   $(3,5)=1$,  we have $\widetilde{M}=N.M \cong \mathbb{Z}_{15}$, and so $\widetilde{M}$ is a cyclic group that has exactly two orbits of size 15 in the vertex set of $X_2(3)$, hence  $X_2(3)$  is a bicirculant over $\widetilde{M}$.
\qed

\medskip
Let $q = r^l$ for an odd prime $r$ and $GF(q)^*=\langle \theta \rangle$
the multiplicative group of the field $GF(q)$ of order $q$. Let $\K_{q+1,q+1}-(q + 1)\K_2$ be
a complete bipartite graph minus a matching whose vertex set is two copies of the
projective line $\PG(1, q)$, where the missing matching consists of all pairs $[i,i'], i\in \PG(1, q)$. For any $d | q-1$ and $d\geq 2$, define a voltage graph
$\K_{q+1}^{2d}= (\K_{q+1,q+1}-(q + 1)\K_2)\times_f \mathbb{Z}_d$, where

$$f_{\infty',i}=f_{\infty,j'}=\overline{0}, i,j\neq \infty;$$
$$f_{i,j'}=\overline{h}, j-i=\theta^h, i,j\neq \infty.$$

\begin{lemma}\label{lem-cov-comp-minus1}
Let $q\geq 7$ be an odd  prime power and let  $d|q-1$ and $d\geq 2$. Then the graph  $\K_{q+1}^{2d}$   is a $2$-arc-transitive bicirculant.


\end{lemma}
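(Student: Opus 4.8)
The plan is to argue as in Lemmas~\ref{lem-cov-comp-1} and \ref{lem-cov-k55}: take the $2$-arc-transitivity of $\K_{q+1}^{2d}$ from the literature and then produce a cyclic group of order $(q+1)d$ acting semiregularly with two orbits on the vertex set. Write $Y=\K_{q+1,q+1}-(q+1)\K_2$ for the base graph, identify its two parts $\Delta_1,\Delta_2$ with $\PG(1,q)=GF(q)\cup\{\infty\}$, so that $\Aut(Y)=S_{q+1}\times S_2$ with $S_{q+1}$ acting diagonally, and let $p\colon\K_{q+1}^{2d}\to Y$ be the covering projection, with covering group $N\cong\mathbb{Z}_d$. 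By \cite{DMM-2008}, $\K_{q+1}^{2d}$ is connected and $2$-arc-transitive; moreover, since for odd $q$ the group $\PSL(2,q)$ is not $2$-arc-transitive on $Y$ (a point stabiliser is not $2$-transitive on the remaining points) while $\PGL(2,q)$ is, the diagonal subgroup $\PGL(2,q)\le\Aut(Y)$ lifts along $p$ (see \cite{DMM-2008}; this can also be verified directly from the explicit voltage, since a fractional linear map multiplies a difference $y-x$ by $\det/((cx+e)(cy+e))$ and so changes $\overline{\log_\theta(y-x)}$ only by a coboundary).

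Next I would pin down the bicirculant group. Let $\xi$ generate $GF(q^2)^*$, let $M\in\GL(2,q)$ be the matrix of multiplication by $\xi$ on $GF(q^2)$ with respect to a fixed $GF(q)$-basis, and let $\rho\in\PGL(2,q)$ be its image; then $T:=\langle\rho\rangle\cong\mathbb{Z}_{q+1}$ is the non-split torus and $\rho$ has a single orbit on $\PG(1,q)$, so the diagonal group $T$ acts regularly on each of $\Delta_1$ and $\Delta_2$. Let $\widetilde T\le\Aut(\K_{q+1}^{2d})$ be the lift of $T$; then $N\unlhd\widetilde T$, $\widetilde T/N\cong T$, and $|\widetilde T|=(q+1)d$. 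Since $T$ is regular on each $\Delta_i$ and $N$ is regular on each fibre of $p$, the stabiliser $\widetilde T_v$ is trivial for every vertex $v$, so $\widetilde T$ is semiregular on $V(\K_{q+1}^{2d})$ with exactly two orbits, namely $p^{-1}(\Delta_1)$ and $p^{-1}(\Delta_2)$, each of size $(q+1)d$. Hence $\K_{q+1}^{2d}$ is a bi-Cayley graph over $\widetilde T$, and it remains only to prove that $\widetilde T$ is cyclic.

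For this, choose a lift $\tilde\rho\in\widetilde T$ of $\rho$ of the standard form $(u,g)\mapsto(\rho u,\,g+\tau(u))$, where $\tau$ is a potential with $\tau(v)-\tau(u)=f(\rho u,\rho v)-f(u,v)$ on arcs; on the bulk of $\Delta_1$ one may take $\tau(u)=\overline{\log_\theta(cu+e)}$, with $(c,e)$ the bottom row of $M$. Then $\tilde\rho^{\,q+1}$ is the covering transformation given by the $u$-independent constant $\sum_{k=0}^{q}\tau(\rho^{k}u)\in\mathbb{Z}_d$; since $c\rho^{k}(u)+e=(c_{k+1}u+e_{k+1})/(c_{k}u+e_{k})$ with $(c_k,e_k)$ the bottom row of $M^{k}$, this sum telescopes to $\overline{\log_\theta(\det M)}$, using that $M^{q+1}=\det(M)\,I$ (because $\xi^{q+1}=N_{q^2/q}(\xi)\in GF(q)^*$ acts as that scalar). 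Now $N_{q^2/q}(\xi)=\xi^{q+1}$ has order $q-1$ in $GF(q)^*$, i.e.\ it generates $GF(q)^*$, so $\log_\theta(\det M)$ is coprime to $q-1$ and hence to $d$; thus $\tilde\rho^{\,q+1}$ generates $N$, whence $N\le\langle\tilde\rho\rangle$ and $\widetilde T=\langle N,\tilde\rho\rangle=\langle\tilde\rho\rangle\cong\mathbb{Z}_{(q+1)d}$. Therefore $\K_{q+1}^{2d}$ is a bi-Cayley graph over a cyclic group of order $(q+1)d$, i.e.\ a bicirculant, and with \cite{DMM-2008} it is a $2$-arc-transitive bicirculant. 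The one genuinely delicate point is this identification of $\tilde\rho^{\,q+1}$ — in particular the bookkeeping for the two arcs of the $\rho$-orbit incident with $\infty$, where the formula for $\tau$ must be adjusted; alternatively, the order of $\tilde\rho$ (and indeed the whole cyclic/metacyclic cover structure of $\K_{q+1}^{2d}$) can simply be read off from the analysis in \cite{DMM-2008}, exactly as the structure of $X_1(4,q)$ was quoted from \cite{DMW-1998} in Lemma~\ref{lem-cov-comp-1}.
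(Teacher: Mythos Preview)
Your approach is correct but genuinely different from the paper's. The paper splits into cases: when $q\equiv 3\pmod 4$, or when $q\equiv 1\pmod 4$ and $d\mid(q-1)/2$, it simply quotes \cite[Theorem~1.2]{DMM-2008} to conclude that $\K_{q+1}^{2d}$ is a $2$-arc-transitive dihedrant, hence a bicirculant. Only the residual case $q\equiv 1\pmod 4$ with $d\nmid(q-1)/2$ is treated directly, and there the paper argues structurally rather than by voltage computation: it lifts $\PGL(2,q)\times\langle\sigma\rangle$ (citing \cite[Lemma~2.9]{XD-2014}), shows $N$ is central in the lift of $W=\PSL(2,q)$, takes the half-torus $M=\langle\alpha^2\rangle\cong\mathbb{Z}_{(q+1)/2}\le W$, uses $(d,(q+1)/2)=1$ to get $\widetilde M\cong\mathbb{Z}_{d(q+1)/2}$, and finally adjoins the involution $(\sigma')^d$ to obtain a cyclic group of order $d(q+1)$ with two orbits.

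Your argument is more uniform: you lift the full non-split torus $T\cong\mathbb{Z}_{q+1}\le\PGL(2,q)$ in one go and then pin down the isomorphism type of $\widetilde T$ by an explicit telescoping voltage computation showing $\tilde\rho^{\,q+1}=\overline{\log_\theta(\det M)}$ generates $N$. This avoids the case split and the auxiliary involution, at the cost of the concrete calculation and the $\infty$-bookkeeping you flag. Two small corrections: the lifting of $\PGL(2,q)$ and the $2$-arc-transitivity of $\K_{q+1}^{2d}$ for \emph{all} $d\mid q-1$ are established in \cite{XD-2014}, not \cite{DMM-2008} (the latter only covers the dihedrant cases, so your proposed fallback of ``reading it off from \cite{DMM-2008}'' does not handle the case $q\equiv 1\pmod 4$, $d\nmid(q-1)/2$); and for the $\infty$ issue, note that $\sum_{k=0}^{q}\tau(\rho^k u)$ is $u$-independent \emph{a priori}, so you cannot simply evaluate at a convenient $u$ --- the orbit always passes through $\infty$ --- but the telescoping still goes through once you write down the correct value of $\tau$ at the single point $\rho^{-1}(\infty)$ (equivalently, track the arc-voltages through $\infty$ using $f_{\infty',i}=f_{\infty,j'}=0$).
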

\proof Let $\overline{\Gamma}:=\K_{q+1}^{2d}$ where $d|q-1$ and $d\geq 2$.
If $q\equiv 3\pmod{4}$, or $q\equiv 1\pmod{4}$ and $d$ divides $\frac{q-1}{2}$, then by \cite[Theorem 1.2]{DMM-2008}, $\overline{\Gamma}$ is a
$2$-arc-transitive dihedrant,  and so $\overline{\Gamma}$ is a
$2$-arc-transitive bicirculant. From now on, we suppose that
$q\equiv 1\pmod{4}$, and $d$ divides $q-1$ but not $\frac{q-1}{2}$.

Let $\Gamma=\K_{q+1,q+1}-(q + 1)\K_2$ be the  complete bipartite graph minus a matching whose vertex set is two copies of the
projective line $\PG(1, q)$, where the missing matching consists of all pairs $[i,i'], i\in \PG(1, q)$.
 Let $$\sigma:i\mapsto i',i'\mapsto i.$$
Then $\sigma$ is an order 2 element of $\Aut(\Gamma)$.
By \cite[Lemma 2.9]{XD-2014}, $\PG L(2,q)\times \langle \sigma \rangle$ is a subgroup of
$\Aut(\Gamma)$ and can be lifted to a subgroup of $\Aut(\overline{\Gamma})$.
Let $\sigma'$ be the lift of $\sigma$.
Let $N=\langle k \rangle \cong \mathbb{Z}_{d}$ be the covering group.
Then $\langle \sigma' \rangle/N\cong \langle \sigma \rangle$.
Note that $N$ acts faithfully and permutation equivalently on each block. Consequently, $\langle \sigma' \rangle\cong \mathbb{Z}_{2d}$.

Let $\mathcal{B}=\{B_{1},B_{2},\ldots,B_{q+1};B_{1'},B_{2'},\ldots,B_{(q+1)'}\}$ be the block system of $V(\overline{\Gamma})$ such that $\overline{\Gamma}_{\mathcal{B}}\cong \Gamma$,
where $B_i$ corresponds to the vertex $i$ of $\Gamma$ and $B_i'$ corresponds to the vertex $i'$ of $\Gamma$.
Set $B_{i}=\{b_{i1},b_{i2},\ldots,b_{id}\}$ and $B_{i'}=\{b_{i'1},b_{i'2},\ldots,b_{i'd}\}$.
Then $(\sigma')^{d}$ is an order 2 element of $\Aut(\overline{\Gamma})$ such that $b_{ij}\mapsto b_{i'j}$ and $b_{i'j}\mapsto b_{ij}$.

Let $W=PSL(2,q)$ and let  $\widetilde{W}$ be the lift of $W$.
In light of the fact that  $q\geq 7$, $W$ is nonabelian and simple.
Since $C_{\widetilde{W}}(N)/N\lhd \widetilde{W}/N\cong PSL(2,q)$, it follows that $C_{\widetilde{W}}(N)/N$ is isomorphic to either  1 or  $PSL(2,q)$.
Since $(\widetilde{W}/N)/(C_{\widetilde{W}}(N)/N)\cong \widetilde{W}/C_{\widetilde{W}}(N)$ is isomorphic to a subgroup of $\Aut(N)$
that is abelian, we know that
$C_{\widetilde{W}}(N)/N$ is not 1, and so
$C_{\widetilde{W}}(N)/N$ is isomorphic to  $W=PSL(2,q)$. Thus
$C_{\widetilde{W}}(N)=\widetilde{W}$ and $N\leq Z(\widetilde{W})$.

Note that $PGL(2,q)$ is 3-transitive on $\PG(1,q)$ and also has an element $\alpha$ of order $q+1$ that is regular on
$\PG(1,q)$.
Let $M:=\langle \alpha^2 \rangle\cong \mathbb{Z}_{(q+1)/2}$ and let  $\widetilde{M}$ be the lift of $M$. Then $M\leq W$.
Since $q\equiv 1\pmod{4}$  is odd, it follows that  $(q-1,q+1)=2$, and so $(q-1,q+1/2)=1$, hence we get $(d,q+1/2)=1$.
Thus $\widetilde{M}=NM \cong \mathbb{Z}_{d(q+1)/2}$.
Moreover, by the definition of $(\sigma')^{d}$, we can see  that $\langle \widetilde{M},(\sigma')^{d} \rangle\cong \mathbb{Z}_{d(q+1)}$.
Therefore  $\overline{\Gamma}$ is a bicirculant over the cyclic group $\langle \widetilde{M},(\sigma')^{d} \rangle$.
\qed

\medskip
Alspach and his colleagues determined the family
of undirected 2-arc-transitive circulants in the following theorem.

\begin{theo}{\rm(\cite[Theorem 1.1]{ACMX-1996})}\label{circulant-2arctrans-1}
A connected, $2$-arc-transitive circulant of order $n$, $n\geq 3$,
is one of the following graphs:

 \begin{itemize}
\item[(1)]  the complete graph $\K_n$ which is exactly $2$-transitive;

\item[(2)]  the complete bipartite graph
$\K_{\frac{n}{2},\frac{n}{2}}$, $n\geq 6$, which is exactly
$3$-transitive;

\item[(3)] the complete bipartite graph $\K_{\frac{n}{2},\frac{n}{2}}$
minus a $1$-factor, $\frac{n}{2}\geq 5$ odd, which is exactly
$2$-transitive; and

\item[(4)]  the cycle $C_n$ of length $n$, which is $k$-arc transitive
for all $k\geq 0$.

\end{itemize}

\end{theo}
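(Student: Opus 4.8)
The plan is to combine the normal‑quotient reduction for $2$‑arc‑transitive graphs with the (classification‑dependent) list of quasiprimitive permutation groups containing a regular cyclic subgroup. Write $\Gamma=\Cay(\mathbb{Z}_n,S)$, $G=\Aut(\Gamma)$, and let $R\cong\mathbb{Z}_n$ be the regular cyclic subgroup. The first, easy, observation is that if $\Gamma$ contains a triangle then, since $G$ is transitive on $2$‑arcs and some $2$‑arc lies in a triangle, every $2$‑arc does, so $\diam(\Gamma)=1$ and $\Gamma=\K_n$; this is conclusion~(1), with $G=S_n$ acting $2$‑transitively. So from now on assume $\Gamma$ is triangle‑free, hence of girth at least $4$.

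Next I would run the standard reduction. If $G$ has a normal subgroup with at least three vertex‑orbits, choose $N$ maximal with this property; by the theory of normal quotients of $2$‑arc‑transitive graphs (Praeger), $\Gamma$ is a regular cover of $\Gamma_N$, the quotient $\Gamma_N$ is $(G/N,2)$‑arc‑transitive, $G/N$ is faithful on $V(\Gamma_N)$, and $G/N$ is quasiprimitive or bi‑quasiprimitive. The $N$‑orbits, being blocks for the regular cyclic group $R$, are cosets of a subgroup $R_0\le R$; faithfulness of $G/N$ forces $R_0=R\cap N$, so $RN/N\cong R/R_0$ is a regular cyclic subgroup of $G/N$. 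Thus $\Gamma_N$ is again a connected $2$‑arc‑transitive circulant of smaller order and $\Gamma\to\Gamma_N$ is a regular cyclic cover; by induction on $n$ it therefore suffices to (i) classify the \emph{basic} graphs — those for which $G$ itself is quasiprimitive or bi‑quasiprimitive — and then (ii) determine which $2$‑arc‑transitive circulants arise as proper cyclic covers of a basic graph or of a complete graph (the latter being needed because a triangle‑free graph can cover $\K_m$).

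For step (i) I would invoke the classification of quasiprimitive (equivalently, after the appropriate O'Nan--Scott reduction, primitive) permutation groups possessing a regular cyclic subgroup: Burnside and Schur handle prime degree, and the general case is completed using the classification of finite simple groups. Up to permutation isomorphism the possibilities are $\mathbb{Z}_p\trianglelefteq G\le\AGL(1,p)$ of degree $p$; $\Alt(m)$ ($m$ odd) and $\Sym(m)$ in the natural degree‑$m$ action; $\PSL(d,q)\le G\le\PGL(d,q)$ together with field automorphisms, acting on the points of $\PG(d-1,q)$; the $2$‑transitive examples of prime degree $\PSL(2,11)$, $M_{11}$ (degree $11$) and $M_{23}$ (degree $23$); wreath products of the above in product action; and, in the bi‑quasiprimitive case, $H\wr S_2$ acting on two copies of a set. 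Running through this list under the triangle‑free hypothesis: the $2$‑transitive cases can only give $\K_m$ or the disconnected empty graph; the affine line, once one verifies that the $2$‑arc condition forces the point stabilizer to act $2$‑transitively on the connection set, gives only a cycle $C_p$ (a case of conclusion~(4)); the product actions on sets of size $m^k$ with $m\ge 3$ give only Hamming‑type graphs, which contain triangles, while $m=2$ gives cubes $\H(k,2)$, which are circulants only for $k\le 2$; and the bipartite action of $S_m\wr S_2$ on $m+m$ points gives exactly $\K_{m,m}$ — conclusion~(2) (with $\K_{2,2}=C_4$ falling under (4)). Hence every basic $2$‑arc‑transitive circulant is $\K_m$, $\K_{m,m}$, or a cycle of prime length.

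Finally, step (ii): a $2$‑arc‑transitive circulant that properly covers $\K_m$, $\K_{m,m}$, or a cycle must again appear on the list, and here the voltage‑graph machinery of Lemmas~\ref{2at-dih-lem1}, \ref{lem-cent-1} and~\ref{lem-vanish} is the tool. By Lemma~\ref{2at-dih-lem1} one reduces to prime cyclic covers; since $\Aut(\mathbb{Z}_p)$ is abelian, any lifted $2$‑arc‑transitive subgroup whose image in $\Aut(\mathbb{Z}_p)$ is trivial centralizes the covering group, so by Lemma~\ref{lem-cent-1} the voltage of every closed walk is invariant under the corresponding base automorphisms. Spanning the cycle space by short cycles and using $2$‑arc‑transitivity then pins the voltage assignment down up to equivalence; with Lemma~\ref{lem-vanish} to detect disconnectedness, a case analysis shows that the only triangle‑free $2$‑arc‑transitive circulant cover of $\K_m$ is its bipartite double cover $\K_{m,m}-m\K_2$, which is a circulant exactly when $m$ is odd (the deleted matching being then the $R$‑orbit of the unique involution of $R$) and is a genuinely new graph precisely for $m\ge 5$ odd, since $\K_{3,3}-3\K_2=C_6$ — this is conclusion~(3); that proper cyclic covers of $\K_{m,m}$ with $m\ge 3$ either fail to be $2$‑arc‑transitive (the lifted point stabilizer drops to index $2$ and $2$‑transitivity on neighbours fails) or fail to be circulants; and that the cyclic covers of a cycle are exactly the cycles $C_n$ — conclusion~(4). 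Assembling steps (i) and (ii) yields precisely the four families.

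The main obstacle is step (i): one needs the classification of quasiprimitive groups with a regular cyclic subgroup, and then a delicate family‑by‑family inspection of their $2$‑arc‑transitive orbital graphs — in particular ruling out projective‑type graphs of large valency and pinning down the exact arithmetic under which $\K_{m,m}-m\K_2$ is a $2$‑arc‑transitive circulant. Step (ii) is technically fussy (one must take real care with the covers of $\K_{m,m}$, which can be arc‑transitive without being $2$‑arc‑transitive) but is essentially routine once the base graphs are in hand.
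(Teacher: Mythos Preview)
The paper does not prove this theorem: it is stated as Theorem~\ref{circulant-2arctrans-1} with a citation to \cite{ACMX-1996} and then used as a black-box input later on (in case~(ii) of the proof of Theorem~\ref{bic-reduction-th2}, and again in Lemma~\ref{metacycl-lem-index2-3}). There is therefore no proof in this paper to compare your proposal against; if you want a comparison you need to consult the original four-page note of Alspach, Conder, Maru\v{s}i\v{c} and Xu.

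That said, your outline is a legitimate route to the result, and it is essentially the same normal-quotient/voltage-graph machinery that the present paper deploys one level up, for bicirculants. Two places where your sketch would need real work before it counts as a proof: (a) the bi-quasiprimitive base case --- your assertion that one only obtains $\K_{m,m}$ amounts to classifying the $2$-transitive groups on each bipartite half that contain a regular cyclic subgroup, and you have not actually carried that out; and (b) in step~(ii), ruling out proper $2$-arc-transitive \emph{circulant} cyclic covers of $\K_{m,m}$ is precisely the content of Lemma~\ref{cyc-knn} here, but that lemma is proved for bicirculants and leans on \cite{XD-2018}, so in a self-contained circulant argument you would need to supply this step independently.
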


\medskip

The  result about primitive permutation groups  that contain an
element with exactly two equal cycles is due to M\"{u}ller \cite[Theorem 3.3]{Muller-2018}.
Using this  result, the following  proposition determines all the vertex primitive arc-transitive  bicirculants.

\begin{prop}{\rm(\cite[Proposition 4.2]{DGJ-2019})}\label{bicirculant-quasiprimitive-th1}
Let $\Gamma$ be a connected $G$-arc-transitive  bicirculant over  the cyclic subgroup $H$ of order $n$ such
that $G$ is quasiprimitive on $V(\Gamma)$. Then $\Gamma$ is one of the following graphs:

 \begin{itemize}
\item[(1)]  $\K_{2n}$ where $n\geq 2$;

\item[(2)]  Petersen graph or its complement;

\item[(3)] $\H(2,4)$ or its complement;
\item[(4)]  folded $5$-cube or its complement.

\end{itemize}

\end{prop}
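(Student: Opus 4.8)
The plan is to apply Müller's classification (cited as \cite[Theorem 3.3]{Muller-2018}) of primitive permutation groups containing an element with exactly two equal cycles, and then match each resulting group/graph pair against the hypothesis that $\Gamma$ is arc-transitive. Since the proposition is attributed to \cite[Proposition 4.2]{DGJ-2019}, the proof here is essentially a pointer, but I would still sketch the underlying argument. First I would note that if $G$ is quasiprimitive on $V(\Gamma)$, which has $2n$ vertices, and $H\leq G$ is cyclic of order $n$ acting semiregularly with two orbits, then the element $h$ generating $H$ is a permutation of the $2n$ points consisting of exactly two $n$-cycles (no fixed points). A quasiprimitive group need not be primitive, so the first real step is to reduce to the primitive case: either show directly that a quasiprimitive (but imprimitive) group cannot contain such an element while also acting arc-transitively on a connected graph, or invoke the relevant lemma from \cite{DGJ-2019} that rules this out. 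The cleanest route is: a normal subgroup $N$ of $G$ is transitive (by quasiprimitivity), hence $|N|$ is divisible by $2n$; combined with the structure forced by having an element that is a product of two $n$-cycles, one argues that any nontrivial block system would have to be one of the two $H$-orbits or refine it, and connectivity plus arc-transitivity then forces primitivity.

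Once we are in the primitive setting, I would quote Müller's theorem to get the explicit list of primitive groups $G$ of degree $2n$ containing an element with cycle type $n^2$. The candidates are (up to the standard actions): $S_{2n}$ and $A_{2n}$ in their natural action; $\mathrm{P\Gamma L}$-type actions of degree $2n = q+1$ or similar on projective/affine spaces; the action of $S_m$ on $2$-subsets when $\binom{m}{2}=2n$; and a short list of sporadic cases. For each such $(G, 2n)$, I would then ask: for which arc-transitive graphs $\Gamma$ on $2n$ vertices does $G\leq\Aut(\Gamma)$ act arc-transitively? The natural action of $S_{2n}$ or $A_{2n}$ forces $\Gamma=\K_{2n}$ (item (1)); the $2$-subset action of $S_m$ on $2n=\binom{m}{2}$ vertices, being arc-transitive on a graph, forces $\Gamma$ to be the triangular graph $T(m)$ or its complement (the Johnson/Kneser graphs), but these are arc-transitive only for small $m$: $T(5)$ is the Petersen complement and its complement is the Petersen graph, giving item (2), and $T(4)\cong \H(2,3)$-type yields nothing new — I would need to check which of these are genuinely bicirculants, i.e. admit the cyclic semiregular subgroup, which is exactly what Müller's hypothesis guarantees; the exceptional small-degree cases produce $\H(2,4)$ and its complement (item (3)) and the folded $5$-cube and its complement (item (4)), the latter arising from the primitive action of $\mathbb{Z}_2^4{:}S_5$ or $S_6$ on $16$ points.

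**The main obstacle** I expect is the bookkeeping in the last step: after Müller's list is in hand, one must for each primitive group of degree $2n$ enumerate all the $G$-arc-transitive graphs (equivalently, the self-paired suborbits / orbitals of $G$ on $V(\Gamma)$), discard the complete and empty graphs' trivial orbitals appropriately, and verify connectivity and the bicirculant property. For the infinite families ($S_{2n}$, $A_{2n}$, and the projective-type actions) this is quick — the only arc-transitive graphs are $\K_{2n}$ and complete multipartite-type graphs that turn out not to give new bicirculants beyond those listed — but the affine and sporadic entries of Müller's list each need to be handled by hand, checking the subdegrees and whether the putative cyclic group of order $n$ is actually realized inside the automorphism group of each candidate graph rather than merely inside $G$. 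Since all of this is carried out in \cite{DGJ-2019}, the proof I would write reduces to: invoke \cite[Theorem 3.3]{Muller-2018} to list the primitive groups, reduce quasiprimitive to primitive via the block-structure argument above, and then cite \cite[Proposition 4.2]{DGJ-2019} (or reproduce its case analysis) to read off the four possibilities.
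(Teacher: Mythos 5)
The paper itself offers no proof of this proposition: it is quoted verbatim from \cite[Proposition 4.2]{DGJ-2019}, with only the preceding remark that it rests on M\"{u}ller's classification \cite[Theorem 3.3]{Muller-2018} of primitive groups containing an element with exactly two equal cycles. So your overall plan --- invoke M\"{u}ller's theorem, do the orbital/bookkeeping analysis, or simply cite \cite{DGJ-2019} --- coincides with what the paper does, and as a pointer-proof it is acceptable.

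However, the one substantive step you sketch beyond the citation has a genuine gap. You propose to pass from quasiprimitive to primitive by arguing that ``any nontrivial block system would have to be one of the two $H$-orbits or refine it,'' so that connectivity and arc-transitivity force primitivity. That claim is not justified and is in fact contradicted by Lemma \ref{quo-1} (itself quoted from \cite{DGJ-2019}): a $G$-invariant partition may instead have every block meeting both $H$-orbits, and this possibility genuinely occurs in this paper --- it is exactly case (2) of Theorem \ref{bic-reduction-th2}, the source of the metacyclic covers. Note also that the $H$-orbits themselves need not be blocks of $G$, and that quasiprimitivity only controls orbits of normal subgroups, not arbitrary block systems, so the observation ``$N$ is transitive, hence $2n$ divides $|N|$'' does not feed into a primitivity argument. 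Since M\"{u}ller's theorem applies only to primitive groups, the quasiprimitive-to-primitive reduction is precisely the point that must be proved before your strategy can run, and your sketch does not prove it; if you do not simply defer to \cite{DGJ-2019} for this step (as the paper does), you need an actual argument there.
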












\begin{lemma}{\rm(\cite[Lemma 5.3]{DGLP-locdt-2012})}\label{dt-quotient12}
Let $\Gamma$ be a connected locally $(G, s)$-distance transitive
graph with $s\geq 2$. Let $1\neq N \lhd G$ be intransitive on
$V(\Gamma)$, and let $\mathcal{B}$ be the set of $N$-orbits on
$V(\Gamma)$. Then one of the following holds:

{\rm (i)}  $|\mathcal{B}| = 2$.

{\rm(ii)} $\Gamma$ is bipartite, $\Gamma_N\cong \K_{1,r}$ with
$r\geq 2$ and $G$ is intransitive on $V(\Gamma)$.

{\rm(iii)} $s=2$, $\Gamma\cong \K_{m[b]}$,   $\Gamma_N \cong \K_{m}$
with $m\geq 3$ and $b\geq 2$.

{\rm(iv)}   $N$ is semiregular on $V(\Gamma)$,  $\Gamma$ is a cover
of $\Gamma_N$, $|V(\Gamma_N)|<|V(\Gamma)|$ and $\Gamma_N$ is
$(G/N,s')$-distance transitive where $s'=\min\{s,\diam(\Gamma_N)\}$.
\end{lemma}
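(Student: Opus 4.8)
The plan is to analyze the induced action of $G/N$ on the set $\mathcal{B}$ of $N$-orbits, and to exploit the local $(G,s)$-distance transitivity to control how edges of $\Gamma$ lie across blocks. First I would record the basic structural facts: since $N$ is normal and intransitive, $\mathcal{B}$ is a $G$-invariant partition, $G/N$ acts on $\mathcal{B}$, and because $G$ is (at least) locally distance transitive, $G$ is either transitive or has exactly two orbits on $V(\Gamma)$ (the latter forcing $\Gamma$ bipartite). I would split immediately on this dichotomy; in the bipartite-intransitive case, a short argument using Lemma \ref{quo-1}-type reasoning on how the two halves interact with $\mathcal{B}$ yields that $\mathcal{B}$ refines the bipartition and $\Gamma_N$ is a complete bipartite graph with one part a singleton, i.e.\ $\Gamma_N\cong\K_{1,r}$, giving case (ii). So assume from now on that $G$ is vertex-transitive.

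Next, with $G$ vertex-transitive on $\Gamma$, the induced group $G/N$ is transitive on $\mathcal{B}$, so all blocks have the same size $b\geq 2$ (if $b=1$ then $N=1$, excluded). If $|\mathcal{B}|=2$ we are in case (i), so assume $|\mathcal{B}|\geq 3$. The key local observation is this: fix a vertex $\alpha$ in block $B$, and consider a neighbour $\beta\notin B$ in block $B'$. Local $(G,2)$-distance transitivity means $G_\alpha$ is transitive on $\Gamma(\alpha)$, hence on the blocks $\neq B$ that meet $\Gamma(\alpha)$; combined with $N\lhd G$ this forces a uniform picture of $|\Gamma(\alpha)\cap B'|$ as $B'$ ranges over blocks adjacent to $B$. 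There are then two possibilities: either $|\Gamma(\alpha)\cap B'|=|B'|=b$ for every adjacent block $B'$, or $|\Gamma(\alpha)\cap B'|=1$ for every such block. The first alternative says that whenever two blocks are adjacent in $\Gamma_N$, the bipartite graph between them is complete; tracing this through and using connectedness and the fact that $G$ acts on blocks, one shows $\Gamma$ is a "blown-up" graph $\K_{m[b]}$ (complete multipartite with all the between-block graphs complete, and in fact $\Gamma_N$ complete). Here the hypothesis $s\geq 2$ is used to rule out $\Gamma_N$ being something other than $\K_m$: if $\Gamma_N$ had non-adjacent vertices at distance $2$, a $2$-arc of $\Gamma$ passing through three distinct blocks and a $2$-arc staying inside two blocks could not lie in the same $G$-orbit unless $b$ and the structure collapse appropriately; pushing this shows $\Gamma_N\cong\K_m$ with $m\geq 3$, giving case (iii) (and one also checks $N$ is transitive on each block here). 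In the second alternative, $|\Gamma(\alpha)\cap B'|=1$ for all adjacent $B'$, which is exactly the covering condition, so $\Gamma$ is a cover of $\Gamma_N$; then $N$ is semiregular (an element of $N$ fixing $\alpha$ fixes $\alpha$'s block pointwise-locally via the covering property and, by connectedness, fixes everything), $|V(\Gamma_N)|<|V(\Gamma)|$ since $b\geq 2$, and the quotient inherits local distance transitivity: $G/N$ acts locally $(G/N,s')$-distance transitively on $\Gamma_N$ with $s'=\min\{s,\diam(\Gamma_N)\}$, because geodesics of length $\leq s'$ in $\Gamma_N$ lift to geodesics in $\Gamma$ (the covering property preserves distances up to the diameter of the quotient) and $G$'s transitivity on the corresponding objects in $\Gamma$ descends. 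This yields case (iv).

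The main obstacle I anticipate is the careful bookkeeping in the "complete between adjacent blocks" branch — specifically, showing that this forces $\Gamma\cong\K_{m[b]}$ and $\Gamma_N\cong\K_m$ rather than some intermediate configuration, and making precise exactly where $s\geq 2$ (as opposed to mere arc-transitivity) is needed. One has to argue that if two blocks $B,B'$ are \emph{non}-adjacent in $\Gamma_N$ while both adjacent to a third block $B''$, the existence of a $2$-arc through $B,B'',B'$ forces, via the orbit condition on $2$-arcs (or $2$-geodesics), every pair of blocks to be adjacent, collapsing $\Gamma_N$ to a complete graph. A secondary technical point is verifying semiregularity of $N$ cleanly in branch (iv): the cleanest route is to note that the covering condition plus connectedness implies any automorphism in $N$ fixing a vertex fixes a spanning set of vertices, hence is trivial. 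I would also double-check the degenerate small cases ($\Gamma$ a cycle, $\Gamma$ with small diameter) separately to make sure they land in (i) or (iv) as expected. Everything else is routine once the edge-distribution dichotomy is established.
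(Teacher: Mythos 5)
This lemma is not proved in the paper at all: it is quoted verbatim from \cite[Lemma 5.3]{DGLP-locdt-2012}, so the only comparison available is with that source, and your sketch would have to stand on its own. It does not, for two concrete reasons. First, your opening reduction is false: you claim that when $G$ is intransitive (two orbits, $\Gamma$ bipartite) one always lands in outcome (ii) with $\Gamma_N\cong \K_{1,r}$, and you then ``assume from now on that $G$ is vertex-transitive''. Outcome (ii) occurs only when $N$ happens to be transitive on one bipartite half, which you never establish. For a counterexample take $\Gamma=C_{12}$ with $G=\langle\, x\mapsto x+2,\ x\mapsto -x\,\rangle$ of order $12$: $G$ has the even and odd vertices as its two orbits, $\Gamma$ is locally $(G,2)$-distance transitive, and $N=\langle x\mapsto x+4\rangle\cong\mathbb{Z}_3$ is normal in $G$ with four orbits; here $\Gamma_N\cong C_4$, $N$ is semiregular and $\Gamma$ is a cover, i.e.\ outcome (iv) with $G$ intransitive. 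So the intransitive case cannot be disposed of as you do; it has to be carried through the whole analysis.

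Second, the ``edge-distribution dichotomy'' that drives your transitive case --- $|\Gamma(\alpha)\cap B'|$ equals either $1$ or $|B'|$ for every block $B'$ adjacent to the block of $\alpha$ --- is asserted, not proved, and it does not follow from what you invoke. Local $(G,2)$-distance transitivity only gives that $G_\alpha$ is \emph{transitive} on $\Gamma(\alpha)$ (unlike the locally $2$-arc-transitive setting, where $2$-transitivity of $G_\alpha^{\Gamma(\alpha)}$ makes the block partition of $\Gamma(\alpha)$ trivial); transitivity yields only a constant intersection size $k$, and nothing you say excludes $1<k<|B'|$. Indeed the dichotomy is genuinely false when $|\mathcal{B}|=2$ (for $C_6$ with $N\cong\mathbb{Z}_3$ the graph between the two blocks is a hexagon, neither a matching nor complete), which is exactly why case (i) appears in the statement; hence any correct argument must use both the transitivity of $G_\alpha$ on $\Gamma_2(\alpha)$ and the existence of a third $N$-orbit, neither of which enters your justification. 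Since this dichotomy is the heart of the lemma (everything after it --- the collapse to $\K_{m[b]}$ with $s=2$, and the cover/semiregular/quotient-transitivity analysis, which are fine in outline), the proposal as written does not constitute a proof.
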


Note that every $(G, 2)$-arc-transitive
graph is $(G, 2)$-distance-transitive. Thus the above lemma is also true for
$(G, 2)$-arc-transitive
graphs.

Define  $Q_{2n}=\langle u,v\rangle$, where
$u^n=1,v^2=u^{n/2},v^{-1}uv=u^{-1}$, the so-called \emph{generalized quaternion group} of order $2n$.

We define two covers of $\K_4$ with respective covering transformation groups
$N=\langle a,b\rangle \cong D_6$ and $Q_{12}$, where $V(\K_4)=\{1,2,3,4\}$.

(1) $AT_D(4,6)=\K_4\times_f D_6$, with the voltage assignment $f:Arc(\K_4)\mapsto D_6$ defined by
$$f_{1,2}=b,f_{1,3}=ba,f_{1,4}=ba^{-1},$$
$$f_{2,3}=ba^{-1},f_{2,4}=ba, f_{3,4}=b;$$

(2) $AT_Q(4,12)=\K_4\times_f Q_{12}$, with the voltage assignment $f:Arc(\K_4)\mapsto Q_{12}$ defined by
$$f_{1,2}=b,f_{1,3}=ba^2,f_{1,4}=ba^{4},$$
$$f_{2,3}=b,f_{2,4}=ba^3, f_{3,4}=b.$$

We define one cover of $\K_5$ with the covering transformation group
$N=\langle a,b\rangle \cong D_6$, where $V(\K_5)=\{1,2,3,4,5\}$.

(3) $AT_D(5,6)=\K_5\times_f D_6$, with the voltage assignment $f:Arc(\K_5)\mapsto D_6$ defined by
$$f_{1,2}=ab,f_{1,3}=b,f_{1,4}=ba,f_{1,5}=b,f_{2,3}=ba,$$
$$f_{2,4}=b, f_{2,5}=b,f_{3,4}=ab,f_{3,5}=b,f_{4,5}=b.$$

Let $GF(q)$ be the field of order $q$ where $q$ is an odd prime power, and let $GF(q)^*=\langle \theta \rangle$. We identify the vertex set of the complete graph $\K_{1+q}$ with the projective line $\PG(1, q) =GF(q) \cup \{\infty\}$. Then we define two families of arc-transitive covers of $\K_{1+q}$ with the respective covering transformation groups $N= \langle a,b\rangle=Q_{2d}$ and $D_{2d}$:

(4)  $ AT_Q(1+q,2d)=\K_{1+q}\times_f Q_{2d}$, where $d\mid q-1$ and $d\nmid \frac{1}{2}(q-1)$;

(5)  $ AT_D(1+q,2d)=\K_{1+1}\times_f D_{2d}$, where  $d\mid \frac{1}{2}(q-1)$ and $d\geq 2$.

For both covers, the voltage assignments $f:Arc(\K_{1+q})\mapsto N$ are given by:
$$f_{\infty,i}=b;f_{i,j}=ba^h \ \ if \ \  j-i=\theta^h \ for \  i,j\neq \infty.$$

\begin{theo}{\rm (\cite[Thoerem 1.1]{XDKX-2015})}\label{metacyclic-complete-th1}
Let $X$ be a connected regular cover of the complete graph $\K_n $ $(n \geq 4)$ whose covering transformation group $N$ is non-trivial metacyclic, and whose fibre-preserving automorphism group acts $2$-arc-transitively on $X$. Then $X$ is isomorphic to one of the following graphs:

\begin{itemize}
\item[(1)]   $\K_{n,n}-n\K_2$ with  $N\cong \mathbb{Z}_2$;

\item[(2)]  $n=4$, $ AT_D(4,6)$ with   $N\cong D_6$;

\item[(3)] $n=4$, $ AT_Q(4,12)$ with   $N\cong Q_{12}$;

\item[(4)] $n=5$, $ AT_D(5,6)$ with   $N\cong D_{6}$;

\item[(5)] $n=1+q\geq 4$, $ AT_Q(1+q,2d)$ with   $N\cong Q_{2d}$, where $d\mid q-1$ and $d\nmid \frac{1}{2}(q-1)$;

\item[(6)] $n=1+q\geq 6$, $ AT_D(1+q,2d)$ with   $N\cong D_{2d}$, where  $d\mid \frac{1}{2}(q-1)$ and $d\geq 2$.
\end{itemize}

\end{theo}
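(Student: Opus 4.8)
The plan is to determine the $2$-arc-transitive quotient group first and then to analyse the voltage assignment. Let $\tilde G$ be the fibre-preserving automorphism group of $X$, so $\tilde G$ is $2$-arc-transitive on $X$, normalises the covering group, and $N\lhd\tilde G$; put $G=\tilde G/N$, the group induced on $\K_n$. Since a covering projection is locally bijective, every $2$-arc of $\K_n$ lifts to a $2$-arc of $X$, so $\tilde G$ being $2$-arc-transitive on $X$ forces $G$ to be $2$-arc-transitive on $\K_n$; for $n\ge 4$ this is equivalent to $G$ being a $3$-transitive permutation group of degree $n$. By the CFSG-based classification of finite $3$-transitive groups, $G$ is therefore one of: $A_n$ or $S_n$; a $3$-transitive affine group of degree $2^d$, namely $\AGL(d,2)$ or (only when $d=4$) $\mathbb{Z}_2^4\rtimes A_7$; a group $G$ with $\PSL(2,q)\le G\le P\Gamma L(2,q)$ of degree $q+1$; or one of the Mathieu groups $M_{11}$ (of degree $11$ or $12$), $M_{12}$, $M_{22}$, $\Aut(M_{22})$, $M_{23}$, $M_{24}$.

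Next I would split on whether $N$ is abelian. If $N$ is abelian it is cyclic or a product of two cyclic groups, and a reduction to prime-power cyclic and elementary abelian covers --- via Lemma~\ref{2at-dih-lem1}, Lemmas~\ref{lem-cent-1} and~\ref{lem-vanish}, and the known treatment of elementary abelian covers of complete graphs --- leaves only $N\cong\mathbb{Z}_2$, giving item~(1), namely $X\cong\K_{n,n}-n\K_2$, the canonical double cover. If $N$ is non-abelian, then $N'$ is a non-trivial cyclic characteristic subgroup and $\Aut(N)$ has no non-abelian simple section (the metacyclic groups whose automorphism group involves a non-abelian simple group are abelian, essentially the $\mathbb{Z}_{p^k}\times\mathbb{Z}_{p^k}$, already covered by the previous case). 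Hence, when $\mathrm{soc}(G)=T$ is non-abelian simple, conjugation induces the trivial map $T\to\Out(N)$, so the preimage $\tilde T$ of $T$ in $\tilde G$ satisfies $\tilde T=N\,C_{\tilde T}(N)$; writing $C=C_{\tilde T}(N)$ one obtains $C\cap N=Z(N)\le Z(C)$ and $C/Z(N)\cong T$, so $Z(N)$ is a quotient of the Schur multiplier of $T$ and hence very small (at most $\mathbb{Z}_2$, apart from the exceptional $A_6$, $A_7$, $M_{22}$). The affine quotients are treated by the same scheme with the translation subgroup $\mathbb{Z}_2^d$ in the role of $T$, and the finitely many Mathieu cases are checked directly; none of these produces anything beyond item~(1).

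The core of the proof is the local analysis at a vertex $v$ of $\K_n$. Normalising $f$ to be trivial on the star at $v$ (a spanning tree of $\K_n$), the voltage of the triangle $(v,i,j)$ equals $f(i,j)$, with $f(j,i)=f(i,j)^{-1}$ and $N=\langle f(i,j):i\ne j\rangle$. Because $G_v$ lifts, its $2$-transitive action on the $n-1$ neighbours of $v$ must carry the family $\{f(i,j)\}$ to an $\Aut(N)$-twist of itself, and comparing the group law of $N$ with this symmetry pins $N$ down. When $G$ is a $3$-transitive subgroup of $P\Gamma L(2,q)$ of degree $q+1$, the two-point stabiliser $G_{vw}$ contains a cyclic group acting semiregularly on the other $q-1$ vertices, of order $q-1$ (or $\frac{1}{2}(q-1)$), which forces the voltages $\{f(w,\cdot)\}$ to run through a coset $\{ba^h\}$ of a cyclic subgroup $\langle a\rangle$ with $a$ of order $d\mid q-1$, while the involution of $P\Gamma L(2,q)$ interchanging $v$ and $w$ forces $b^{-1}ab=a^{-1}$; according to whether $b^2=1$ or $b^2=a^{d/2}$ --- the dichotomy being governed by whether $d\mid\frac{1}{2}(q-1)$ --- one obtains $N\cong D_{2d}$, giving $AT_D(1+q,2d)$, or $N\cong Q_{2d}$, giving $AT_Q(1+q,2d)$ (items~(5)--(6)), connectivity forcing $N=\langle a,b\rangle$. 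When $G=A_n$, $S_n$ or an affine group, $G_{vw}$ is far from cyclic, and the same compatibility collapses $N$ to $D_6$ or $Q_{12}$ when $n\in\{4,5\}$ --- yielding $AT_D(4,6)$, $AT_Q(4,12)$, $AT_D(5,6)$ (items~(2)--(4)) --- and to $\mathbb{Z}_2$ for all larger $n$. It then remains to verify that each surviving triple $(G,N,f)$ indeed produces a connected graph carrying a $2$-arc-transitive fibre-preserving group, and to identify it with the graph named in the statement. The main obstacle is precisely this local step: establishing, uniformly in $q$, that the abstract compatibility between the $2$-transitive action of $G_v$ and the multiplication in the metacyclic group $N$ forces $N$ to be dihedral or dicyclic of order $2d$, and getting the dihedral-versus-generalised-quaternion parity dichotomy exactly right.
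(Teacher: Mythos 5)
First, a point of comparison: the paper does not prove this statement at all --- it is quoted as \cite[Theorem 1.1]{XDKX-2015} and used as a black box in Lemma~\ref{metacycl-lem-index2-3} --- so there is no internal proof to measure your argument against, and your sketch has to stand on its own as a proof of the cited classification. Measured that way, it has a genuine gap: your abelian/non-abelian dichotomy is miscalibrated. You claim that when $N$ is abelian the classification ``leaves only $N\cong\mathbb{Z}_2$, giving item~(1)''. That is false, and it contradicts the very statement you are proving: items (5) and (6) with $d=2$ have \emph{abelian} covering groups, since $Q_4\cong\mathbb{Z}_4$ and $D_4\cong\mathbb{Z}_2\times\mathbb{Z}_2$. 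Concretely, $AT_Q(q+1,4)$ (for $q\equiv 3\pmod 4$) is the graph $X_1(4,q)$, which this paper itself exhibits as a connected $2$-arc-transitive regular $\mathbb{Z}_4$-cover of $\K_{q+1}$ (Lemma~\ref{lem-cov-comp-1}, and Lemma~\ref{cyc-kn} via \cite{DMW-1998}); likewise $AT_D(q+1,4)$ for $q\equiv 1\pmod 4$ is a $\mathbb{Z}_2\times\mathbb{Z}_2$-cover. Your non-abelian local analysis (dihedral versus quaternion relation for $b$ over a non-trivial $\langle a\rangle$) would never recover these families, so as written the proposal ``proves'' a strictly smaller list than the theorem asserts. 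The source of the error is the reduction you lean on: Lemma~\ref{2at-dih-lem1} only manufactures a prime-cyclic cover from a cyclic one, which is a tool for \emph{non-existence} arguments (as in Lemma~\ref{cyc-knn}), not a classification device; the cyclic case genuinely has to be settled by the Du--Maru\v{s}i\v{c}--Waller classification, which yields $\mathbb{Z}_2$ \emph{and} $\mathbb{Z}_4$, and the rank-two abelian metacyclic case ($\mathbb{Z}_2\times\mathbb{Z}_2$ among them) needs its own treatment rather than a pointer to ``elementary abelian covers''.

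Two further soft spots, less fatal but real. Your dispatch of the sporadic $3$-transitive socles is not an argument: for $M_{22}$ the Schur multiplier is $\mathbb{Z}_{12}$, so the central-extension bound ``$Z(N)$ at most $\mathbb{Z}_2$'' that you invoke does not apply there, and those cases have to be killed by the voltage/cycle-space method (Lemmas~\ref{lem-cent-1} and~\ref{lem-vanish}, in the style of the proof of Lemma~\ref{cyc-knn}), which you do not carry out. And you yourself concede that the uniform-in-$q$ local step --- showing that compatibility of the $2$-transitive vertex action with the metacyclic multiplication forces $N$ to be dihedral or generalized quaternion with exactly the stated parity dichotomy on $d$ --- is an unresolved obstacle; that step is the heart of \cite{XDKX-2015}, so the sketch stops short of a proof precisely where the work lies.
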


Let $d \geq  2$ be an integer and let $q$ be a prime power. Let $V$ be a vector space of dimension
$d$ over $GF(q)$, the finite field with $q$ elements. We define the set of non-zero vectors
in $V$ as the point set $\mathcal{P}$ and the set of affine hyperplanes in $V$ as the block set $\mathcal{B}$, i.e.,
$\mathcal{P}= \{x\in  V | x \neq 0\}$ and $ \mathcal{B}= \{x + H | H$ is a hyperplane in $V$ and  $x\in V \setminus  H\}$.

We make a partition $\mathcal{G}$ of $\mathcal{P}$ such that the collinear non-zero vectors in $V$ belong
to the same group in $\mathcal{G}$. Note that each group in $\mathcal{G}$ has size $q-1$. Then $(\mathcal{P}; \mathcal{G}; \mathcal{B})$ is
a $GDD(n,m; k; 0; \lambda_2)$, where $n = q-1, m = \frac{q^d-1}{q-1}$
 is the number of projective points
in $V$, $k = q^{d-1}$ is the number of affine hyperplanes containing a given non-zero vector,
and $\lambda_2 = q^{d-2}$ is the number of affine hyperplanes containing two given non-zero and
non-collinear vectors.

We look at the dual incidence structure $\mathcal{I}^*$ of $\mathcal{I} = (\mathcal{P}; \mathcal{B}; I)$. We
make a partition $\mathcal{G}'$ of $\mathcal{B}$ such that the parallel affine hyperplanes belong to the same
group in $\mathcal{G}'$. Then $(\mathcal{B}; \mathcal{G}';\mathcal{P})$ becomes a $GDD(n;m; k; 0; \lambda_2)$, where $n = q- 1$, $m = \frac{q^d-1}{q-1}$
is the number of $d-1$ dimensional subspaces in $V$, $k = q^{d-1}$ is the number of non-zero
vectors in an affine hyperplane, and $\lambda_2 = q^{d-2}$ is the number of non-zero vectors in the
intersection of two given non-parallel affine hyperplanes.

This shows that $\mathcal{D}(d; q):= (\mathcal{P};\mathcal{G}; \mathcal{B})$ is a $GDDDP(n;m; k; 0; \lambda_2)$, where $n; m; k; \lambda_2$
are given above. We denote $\Gamma(d, q) := \Gamma(\mathcal{D}(d; q))$ as the point-block incidence graph of $\mathcal{D}(d; q)$. It is clear that the general linear group $GL(d, q)$ acts as a group of automorphism of the graph $\Gamma(d, q)$.

By \cite[Proposition 4]{QDK-2016},  the point-block incidence graphs $\Gamma(d, q)$ of classical group divisible
designs with the dual property $\mathcal{D}(d; q)$ are 2-arc-transitive dihedrants.

For any $r|q-1$, let $N\leq Z=Z(GL(d,q))\cong \mathbb{Z}_{q-1}$ and $|N|=(q-1)/r$. Let
$\Gamma(d, q, r)$ be the quotient graph of $\Gamma(d,q)$ induced by $N$.

We use  $X(3, 2)$ to denote the graph with the vertex set
$\{i|i\in \mathbb{Z}_{14}\}\cup \{i'|i'\in \mathbb{Z}_{14}\}$ where $i$ and $j'$ are adjacent if
and only if $j'-i\in \{0,1,9,11\}$.

\bigskip

\bigskip

\section{Proof of Theorem \ref{th-quot-miniblock-1}}


In this section, we give a reduction result for the family of $(G,2)$-arc-transitive  bicirculants.

Let $\Gamma$ be a $(G,2)$-arc-transitive  bicirculant over a cyclic subgroup $H$ of $G$. Then the first lemma characterizes normal subgroups of $G$ which have  at least $3$ orbits and also presents a
relationship between the two orbits of  $H$  and the set of $N$-orbits.

\begin{lemma}\label{quot-orbit-01}
Let $\Gamma$ be a connected $(G,2)$-arc-transitive  bicirculant   over   the    cyclic subgroup $H$ of $G$.
Let $H_0$ and $H_1$ be the two orbits of $H$ on $V(\Gamma)$. Let $N$ be a  normal subgroup of $G$ with  at least $3$ orbits and let $B$ be an  $N$-orbit.   Then $N$ is regular on each orbit, and either

\begin{enumerate}[{\rm (1)}]
\item  $B\subset H_i$ for some $H_i$, and  $N\leq H$       is cyclic; or
\item  $|B\cap H_0|=|B\cap H_1|$, and  $N$       has a cyclic index $2$ normal subgroup $H\cap N$.

\end{enumerate}

\end{lemma}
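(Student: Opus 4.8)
The plan is to first force $N$ to be semiregular on $V(\Gamma)$, then to identify $N$ with the kernel of the $G$-action on the partition it defines; after that both alternatives come out of the subgroup arithmetic of the cyclic group $H$. We may assume $N\neq 1$, since otherwise conclusion $(1)$ holds trivially. Write $\mathcal{B}$ for the set of $N$-orbits; as $N\lhd G$ this is a $G$-invariant partition of $V(\Gamma)$ with $|\mathcal{B}|\geq 3$. I would first apply Lemma \ref{dt-quotient12}, which is available by the remark following it. Its alternatives $(i)$ and $(ii)$ are excluded since $|\mathcal{B}|\geq 3$ and $G$ is transitive on $V(\Gamma)$. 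Alternative $(iii)$ would give $\Gamma\cong\K_{m[b]}$ with $m\geq 3$ and $b\geq 2$; but such a graph carries a $2$-arc $(x,y,z)$ with $x,z$ in two distinct parts both different from the part of $y$ (possible as $m\geq 3$), so with $x\sim z$, and also a $2$-arc $(x,y,z)$ with $x,z$ in one common part different from that of $y$ (possible as $b\geq 2$), so with $x\not\sim z$; no automorphism sends the first to the second, so $\K_{m[b]}$ is not $2$-arc-transitive, a contradiction. Hence alternative $(iv)$ holds: $N$ is semiregular on $V(\Gamma)$ and $\Gamma$ is a cover of $\Gamma_N$. In particular $N$ is regular on each of its orbits, which is the first assertion.

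The key point is the equality $N=L$, where $L$ is the kernel of the action of $G$ on $\mathcal{B}$; clearly $N\leq L$. To see $L\leq N$, fix $v\in V(\Gamma)$ in the block $B_v\in\mathcal{B}$ and take $y\in L_v$. Then $y$ fixes $v$ and fixes every block setwise; since $\Gamma$ is a cover of $\Gamma_N$, each neighbour of $v$ is the unique neighbour of $v$ in its own block, and as $y$ fixes $v$ and that block it must fix that neighbour. Iterating this along paths out of $v$ and using connectivity of $\Gamma$, we get that $y$ fixes every vertex, so $y=1$. Thus $L$ is semiregular on $V(\Gamma)$, while its orbits are exactly the $N$-orbits (as $N\leq L$ and $L$ fixes each member of $\mathcal{B}$), on each of which $N$ is already transitive; hence $|L|=|B_v|=|N|$ and $L=N$. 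In particular, for any block $B$ the setwise stabiliser of $B$ in $H$ is contained in $H\cap L=H\cap N$.

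Finally I would run through Lemma \ref{quo-1}$(1)$. Suppose every block lies inside $H_0$ or inside $H_1$, and pick $B\subseteq H_0$. By Lemma \ref{quo-1}$(2)$, $B$ is a block for the regular cyclic action of $H$ on $H_0$, so its setwise stabiliser $K_B$ in $H$ is the unique subgroup of $H$ of order $|B|=|N|$; the analogous statement for the blocks inside $H_1$ produces the same subgroup (uniqueness of the subgroup of a given order in a cyclic group), so the setwise stabiliser in $H$ of every block equals $K_B$, whence $K_B$ fixes every block and $K_B\leq L=N$. Comparing orders gives $K_B=N$, so $N\leq H$ and $N$ is cyclic, which is conclusion $(1)$. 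Suppose instead every block $B$ meets both $H_0$ and $H_1$. By Lemma \ref{quo-1}$(2)$, $B\cap H_0$ and $B\cap H_1$ are blocks for $H$ on $H_0$ and on $H_1$; since $N$-orbits are pairwise disjoint, the setwise stabiliser in $H$ of each of $B$, $B\cap H_0$, $B\cap H_1$ is one common subgroup $K_B$, so $|B\cap H_0|=|K_B|=|B\cap H_1|$, giving the first half of conclusion $(2)$, and $|N|=|B|=2|K_B|$. Moreover $H$ is transitive on $\mathcal{B}$ here (it is transitive on $H_0$ and the traces $B'\cap H_0$ form an $H$-invariant partition of $H_0$), so $K_B$ fixes every block and $K_B\leq H\cap N$; conversely $H\cap N$ stabilises $B$, whence $H\cap N\leq K_B$. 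Therefore $H\cap N=K_B$ has index $2$ in $N$, is cyclic, and is normal in $N$, which is conclusion $(2)$. The one genuinely delicate point is the equality $L=N$; once that is in place, the remainder is bookkeeping inside the cyclic group $H$.
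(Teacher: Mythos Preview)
Your proof is correct and follows the same route as the paper: Lemma~\ref{dt-quotient12} forces $N$ to be semiregular (hence regular on each orbit), Lemma~\ref{quo-1} supplies the dichotomy on how the $N$-orbits sit relative to $H_0,H_1$, and the rest is subgroup arithmetic inside the cyclic group $H$. You are more explicit than the paper in two places---you argue directly that $\K_{m[b]}$ with $m\geq 3$, $b\geq 2$ is not $2$-arc-transitive (the paper instead notes that a non-complete $2$-arc-transitive graph has girth $\geq 4$), and you spell out why $N$ coincides with the kernel $L$ of the $G$-action on $\mathcal{B}$---while the paper phrases the final counting via the regularity of $HN/N\cong H/(H\cap N)$ on the appropriate set of blocks rather than via block stabilisers $K_B$ in $H$; these are equivalent computations. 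One small expository remark: the sentence ``In particular, for any block $B$ the setwise stabiliser of $B$ in $H$ is contained in $H\cap L=H\cap N$'' does not follow from $L=N$ alone, since stabilising one block is weaker than lying in the kernel; however, your subsequent case analysis establishes exactly this (via uniqueness of subgroups in a cyclic group in case~(1), and via transitivity of $H$ on $\mathcal{B}$ in case~(2)), so the proof stands.
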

\proof Since $\Gamma$ is a non-complete $(G,2)$-arc-transitive graph, it follows that $\Gamma$ is $(G,2)$-distance-transitive with girth at least 4. Particularly, $\Gamma\ncong \K_{m[b]}$ for any  $m\geq 3$ and $b\geq 2$.  Since the normal subgroup $N$ has at least three orbits on the vertex set, it follows from  Lemma \ref{dt-quotient12} that
$N$ is regular on each orbit, and $N$ is the kernel of the action of $G$ on the set of $N$-orbits.

Let $\mathcal{B}$ be the set of $N$-orbits.  Then by Lemma \ref{quo-1},
for each $B\in \mathcal{B}$, either $B\subset H_i$ for some $H_i$ or $|B\cap H_0|=|B\cap H_1|$, where $H_0$ and $H_1$ are the two orbits of $H$ on $V(\Gamma)$.
Suppose first that $B\subset H_i$ for some $H_i$. Then $HN/N\cong H/(H\cap N)$ is regular on $\mathcal{B}\cap H_i$. Consequently
$H\cap N$ is regular on $B$, and so   $|H\cap N|=|B|$. As $N$ is regular on each orbit, it follows that
$|H\cap N|=|B|=|N|$,
hence we have $H\cap N=N$. Thus  $N\leq H$ is cyclic, and (1) holds.
Next suppose that   $|B\cap H_0|=|B\cap H_1|$. Then $HN/N\cong H/(H\cap N)$ is regular on $\mathcal{B}$, and so
$H\cap N$ is semiregular on $B$ with two orbits. Thus  $H\cap N$ is a cyclic index 2 normal subgroup of $N$, so that (2) holds.
\qed


\medskip
We give a reduction result for the family of $(G,2)$-arc-transitive  bicirculants.

\begin{theo}\label{bic-reduction-th2}
Let $\Gamma$ be a connected $(G,2)$-arc-transitive  bicirculant   over   the    cyclic subgroup $H$ of $G$.
Let $H_0$ and $H_1$ be the two $H$-orbits on $V(\Gamma)$. Let $N$ be a  normal subgroup of $G$ maximal with respect to having at least $3$ orbits. Then
$\Gamma$ is a cover of $\Gamma_N$ which
 is $(G/N,2)$-arc-transitive, and $G/N$ is faithful and either quasiprimitive or bi-quasiprimitive on $V(\Gamma_N)$.
Moreover, one of the following holds.

\begin{enumerate}[$(1)$]
\item each $N$-orbit is a subset of  either $H_0$ or $H_1$,   $N\leq H$       is cyclic, and $\Gamma_N$ is a bicirculant of $H/N$;

 \begin{itemize}
\item[(1.1)]  $G/N$ is quasiprimitive on $V(\Gamma_N)$ and $\Gamma_N$ is one of the following graphs:

\begin{enumerate}[$(a)$]

\item  $\K_{2,2}$;

\item Petersen graph;

\item  folded $5$-cube;

\item  $\K_n$ where $n\geq 3$;

\item $ \K_{n,n}-n\K_2$ where $n\geqslant 3$.
\end{enumerate}

\item[(1.2)] $G/N$ is bi-quasiprimitive on $V(\Gamma_N)$ and $\Gamma_N$ is one of the following graphs:

\begin{enumerate}[$(a)$]
\item  $B(H(11))$ and $B'(H(11))$;

\item $\K_{n,n}$ where $n\geqslant 2$;

\item  $ \K_{n,n}-n\K_2$ where $n\geqslant 3$;


\item $B(\PG(d-1,q))$ and $B'(\PG(d-1,q))$, where $d\geq 3$, $q$ is a prime power.

\end{enumerate}

\end{itemize}

\item each $N$-orbit intersects both $H_0$ and $H_1$ non-trivially, and  $N$       has a cyclic index $2$ normal subgroup $H\cap N$, and $\Gamma_N$ is one of the following circulants:

\begin{enumerate}[$(a)$]
\item the cycle $C_n$ where $n\geq 3$;

\item   $\K_n$ where $n\geq 3$;

\item  $\K_{n,n}$ where $n\geq 3$;

\item  $\K_{n,n}-n\K_2$ where  $n\geq 5$ is odd.

\end{enumerate}

\end{enumerate}

\end{theo}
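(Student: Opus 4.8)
The plan is to split the argument into a structural reduction and a look-up against known classifications. First I would apply Lemma~\ref{quot-orbit-01} to $N$: it gives that $N$ is regular on each of its orbits and produces the dichotomy of cases~$(1)$ and $(2)$ in the statement (in case~$(1)$, $N\le H$ is cyclic; in case~$(2)$, $H\cap N$ is a cyclic index-$2$ normal subgroup of $N$). I would also note that $\Gamma$ is vertex-transitive (being arc-transitive) and that $N=1$ happens precisely when $G$ is already quasiprimitive or bi-quasiprimitive on $V(\Gamma)$, in which case $\Gamma_N=\Gamma$ and the reduction is immediate; so I may henceforth assume $N\neq1$, which forces $\Gamma$ non-complete (if $\Gamma\cong\K_m$ then $G$ is $2$-transitive, hence primitive, so every nontrivial normal subgroup is transitive and $N=1$).

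For the reduction, with $\mathcal{B}$ the set of $N$-orbits I would invoke Lemma~\ref{dt-quotient12}: its clause (i) is ruled out since $|\mathcal{B}|\ge3$, clause (ii) since $\Gamma$ is vertex-transitive, and clause (iii) since a non-complete $2$-arc-transitive graph is never $\K_{m[b]}$ with $m\ge3$, $b\ge2$ --- such a graph is complete multipartite with at least three parts of size at least two, hence carries both a $2$-arc with adjacent end-vertices and a $2$-arc with non-adjacent end-vertices, which no automorphism can interchange. Thus clause (iv) holds: $N$ is semiregular, $\Gamma$ is a cover of $\Gamma_N$, and $|V(\Gamma_N)|<|V(\Gamma)|$. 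Local bijectivity of the covering projection makes it carry $2$-arcs of $\Gamma$ onto $2$-arcs of $\Gamma_N$, and every $2$-arc of $\Gamma_N$ lifts through any chosen preimage of its middle vertex; hence transitivity of $G$ on $2$-arcs of $\Gamma$ descends to transitivity of $G/N$ on $2$-arcs of $\Gamma_N$. Faithfulness of $G/N$ follows from maximality of $N$: the kernel of $G$ on $\mathcal{B}$ fixes every block setwise, so its orbits refine $\mathcal{B}$, it has $\ge3$ orbits, and therefore equals $N$. Similarly, if a nontrivial $M/N\lhd G/N$ had $\ge3$ orbits on $\mathcal{B}$ then $M\lhd G$ would have $\ge3$ orbits on $V(\Gamma)$, contradicting maximality; so every nontrivial normal subgroup of $G/N$ has at most two orbits, whence $G/N$ (being transitive) is quasiprimitive or bi-quasiprimitive.

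It remains to identify $\Gamma_N$. In case~$(1)$, $HN/N=H/N$ is cyclic and semiregular on $\mathcal{B}$ (since $N$, the kernel, is regular on its orbits), with the two equal-sized orbits $\overline{H_0},\overline{H_1}$; so $\Gamma_N$ is a bi-Cayley graph over a cyclic group, i.e.\ a bicirculant of $H/N$. If $G/N$ is here quasiprimitive, I would apply Proposition~\ref{bicirculant-quasiprimitive-th1} to $\Gamma_N$ and delete the non-$2$-arc-transitive members of its list (the complements of the Petersen graph and of $\H(2,4)$, and $\H(2,4)$ itself, which contain triangles without being complete multipartite), leaving the graphs of (1.1); if $G/N$ is bi-quasiprimitive, a normal subgroup with exactly two orbits forces $\Gamma_N$ bipartite, and I would then carry out the bi-quasiprimitive bipartite analysis for arc-transitive bicirculants, again keeping only the $2$-arc-transitive outputs, to reach (1.2). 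In case~$(2)$, a count of orders ($|HN/N|=|H|/|H\cap N|=2|H|/|N|=|\mathcal{B}|$, using $|H\cap N|=|N|/2$ and $|B|=|N|$ for each $N$-orbit $B$) shows $HN/N$ is regular on $\mathcal{B}$, so $\Gamma_N$, being Cayley over a cyclic group, is a circulant; Theorem~\ref{circulant-2arctrans-1} then yields exactly the families of (2).

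The step I expect to be the real obstacle is the bi-quasiprimitive subcase of case~$(1)$: one must first establish that $\Gamma_N$ is bipartite, and then combine an O'Nan--Scott-type description of bi-quasiprimitive permutation groups with the presence of a semiregular cyclic subgroup of order $|V(\Gamma_N)|/2$ to pin down the bipartite $2$-arc-transitive bicirculants --- and throughout case~$(1)$ one must check carefully which graphs arising from the arc-transitive classifications survive the strengthening to $2$-arc-transitivity.
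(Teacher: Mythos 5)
Your structural reduction is the same as the paper's: Lemma \ref{quot-orbit-01} gives the dichotomy between cases (1) and (2) together with regularity of $N$ on its orbits; Lemma \ref{dt-quotient12} (with clauses (i)--(iii) excluded exactly as you argue) gives the cover property and the $(G/N,2)$-arc-transitivity of $\Gamma_N$; maximality of $N$ gives faithfulness and quasiprimitivity/bi-quasiprimitivity of $G/N$. Your identification of $\Gamma_N$ in subcase (1.1) (apply Proposition \ref{bicirculant-quasiprimitive-th1} and discard the girth-$3$ members) and in case (2) (your order count showing $HN/N$ is regular on the block set, so $\Gamma_N$ is a circulant, then Theorem \ref{circulant-2arctrans-1}) also matches the paper, the latter being the content the paper imports from \cite[Lemma 3.2]{DGJ-2019}.

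The genuine gap is subcase (1.2), which you yourself flag as ``the real obstacle'' and only outline: you propose to redo a bi-quasiprimitive O'Nan--Scott style analysis of groups containing a semiregular cyclic subgroup of half the degree, which is a substantial classification in its own right and is not carried out in your proposal. The paper does not perform this analysis; it quotes the existing classification of arc-transitive bicirculants admitting a bi-quasiprimitive group (\cite[Propositions 5.1 and 5.2]{DGJ-2019}), distinguishing whether or not the two $H/N$-orbits are the bipartite halves. Moreover, your phrase ``again keeping only the $2$-arc-transitive outputs'' hides the one step in (1.2) that is genuinely nontrivial: the arc-transitive list contains the infinite family $G(2p,r)$, and deciding which of these are $2$-arc-transitive is not a girth or complete-multipartite check. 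The paper handles it by exhibiting $G(2p,r)$ as a Cayley graph of the dihedral group of order $2p$ and invoking the classification of $2$-arc-transitive dihedrants \cite[Theorem 1.1]{DMM-2008}, which shows that only $p=11$, $r=5$, i.e.\ $B(H(11))$, survives. Without either citing \cite[Propositions 5.1, 5.2]{DGJ-2019} plus this dihedrant argument, or supplying an equivalent analysis, your proposal does not establish the list in (1.2).
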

\begin{proof} Since $\Gamma$ is a $(G,2)$-arc-transitive graph,  it follows that $\Gamma$ is  a $(G,2)$-distance-transitive graph and also  $\Gamma\ncong \K_{m[f]}$ for any  $m\geq 3$ and $f\geq 2$. Since $N$ is a  normal subgroup of $G$ maximal with respect to having at least $3$ orbits,   by Lemma \ref{dt-quotient12},   $N$ is regular on each orbit  and $\Gamma$ is a cover of $\Gamma_N$.
Moreover, all normal subgroups of the quotient group $G/N$ are transitive or have exactly two orbits on $V(\Gamma_N)$. Thus $G/N$ is quasiprimitive or bi-quasiprimitive on $V(\Gamma_N)$.

Let $\mathcal{B}=\{B_1,\ldots,B_t\}$ be the set of $N$-orbits where  $t\geqslant 3$. Since $\Gamma$ is $(G,2)$-arc-transitive, it is easy to show  that $\Gamma_N$ is a $(G/N,2)$-arc-transitive graph, and  each induced subgraph $[B_i]$ is  edge-less.
It follows from  Lemma \ref{quot-orbit-01} that either

\begin{enumerate}[{\rm (i)}]
\item  all elements of $\mathcal{B}$ are subsets of $H_0$ or $H_1$ and $N\leqslant H$ is a cyclic group; or
\item  $B\cap H_0\neq \varnothing$ and $B\cap H_1\neq \varnothing$ for each   $B\in \mathcal{B}$, and  $N$       has a cyclic index $2$ normal subgroup $H\cap N$.

\end{enumerate}

Suppose first that (i) holds. Then
for each $B\in \mathcal{B}$, either $B\subset H_0$ or $B\subset H_1$.
Moreover, by the proof of Lemma \ref{quot-orbit-01}, we know that $HN/N\cong H/(H\cap N)\cong H/N$ is regular on $\mathcal{B}\cap H_i$.  Thus
$\Gamma_N$ is a bicirculant over $ H/N$.

Assume  that $G/N$ is quasiprimitive on $V(\Gamma_N)$. Then as   $\Gamma_{N}$ is a $(G/N,2)$-arc-transitive bicirculant, it follows that $\Gamma_{N}$  is one of the  graphs in  \cite[Proposition 4.2]{DGJ-2019}.
Note that the following graphs: $\H(2,4)$ and its complement, $\K_{n[2]}$ with $n\geq 3$, complement of the folded $5$-cube  and  complement of the Petersen graph   have girth 3. Thus these graphs are  not 2-arc-transitive.
Therefore,  $\Gamma_{N}$  is one of the  graphs in Table \ref{table-at-quasi}.

\begin{table}[h]\caption{$(G/N,2)$-arc-transitive and $G/N$-quasiprimitive  bicirculants }\label{table-at-quasi}
\begin{tabular}{llll}
\hline
  & Graphs   \\
\hline\hline
 1  &  $\K_{2,2}$;\\
 \hline
 2  &   Petersen graph and $A_5\leqslant G/N\leqslant S_5$;          \\
\hline
3   &    folded $5$-cube and $G/N$ is a rank $3$ subgroup of $\AGL(4,2)$;  \\
\hline
4&  $\K_{2n}$ where $n\geq 2$;  \\
\hline
5& $ \K_{n,n}-n\K_2$ where $n\geq 3$.\\
\hline
\end{tabular}

\end{table}

Now suppose that  $G/N$ is bi-quasiprimitive on $V(\Gamma_N)$.
Then $G/N$ has a minimal normal subgroup $M/N$ that has exactly two orbits on $V(\Gamma_N)$.
Since  $\Gamma_N$ is $G/N$-arc-transitive and connected,
each $M/N$-orbit contains no edge of $\Gamma_N$.
Thus $\Gamma_N$ is a bipartite graph, and  the two $M/N$-orbits form the two bipartite halves of $\Gamma_N$.
In particular,   all intransitive  normal subgroups of $G/N$ have the same orbits.

Assume that  the two orbits of the cyclic subgroup $H/N$ are the two bipartite halves of $\Gamma_N$. Then $\Gamma_N$ is one of the  graphs
in  \cite[Proposition 5.1]{DGJ-2019}.

Let $\Gamma_N$ be the graph $G(2p,r)$ of \cite[Proposition 5.1]{DGJ-2019}.
Then $V(\Gamma_N)$ consists of the elements $i$ and $i'$ for $i\in\mathbb{Z}_p$. Let
\begin{align*}
\tau: & \  V(\Gamma_N)\mapsto V(\Gamma_N),i\mapsto i+1,i'\mapsto (i+1)',\\
\sigma: & \  V(\Gamma_N)\mapsto V(\Gamma_N),i\mapsto (-i)', i'\mapsto -i.
\end{align*}

Then $\tau$ is an automorphism of $\Gamma_N$ of order  $p$ consisting of two $p$-cycles,
and $\sigma$ is an automorphism of $\Gamma_N$ of order  $2$ swapping the two orbits of $\tau$.
Moreover,  $\sigma \tau \sigma =\tau^{-1}$, and $\langle \sigma,\rho\rangle \cong D_{2p}$ is a dihedral group of order $2p$ that acts regularly on the vertex set.
Thus $\Gamma_N$ is a Cayley graph of $D_{2p}$.
Since $\Gamma_N$ is $(G/N,2)$-arc-transitive, it follows that $\Gamma_N$ is a graph of \cite[Theorem 1.1]{DMM-2008}.
Moreover,  \cite[Theorem 1.1]{DMM-2008} indicates that  $\Gamma_N$ is 2-arc-transitive if and only if $p=11,r=5$ and $\Gamma_N=G(22,5)=B(H(11))$.

Thus  $\Gamma_N$ is one of the  graphs
in Table \ref{table-at-biquasi-1}.

\begin{table}[h]\caption{$(G/N,2)$-arc-transitive and $G/N$-bi-quasiprimitive  bicirculants (I)}\label{table-at-biquasi-1}
\begin{tabular}{llll}
\hline
  & Graphs   \\
\hline\hline
 1  &    $\K_{n,n}$ where $n\geqslant 2$;  \\
 \hline
 2  &   $ \K_{n,n}-n\K_2$ where $n\geqslant 3$;          \\
\hline
3   & $B(\PG(d-1,q))$ and $B'(\PG(d-1,q))$, where $d\geq 3$, $q$ is a prime power;\\
\hline
 4  &   $B(H(11))$ and $B'(H(11))$.          \\
\hline
\end{tabular}

\end{table}

Assume  that $H/N$ has two orbits of size $n$ and these are not the bipartite halves of $\Gamma_N$.
Then by  \cite[Proposition 5.2]{DGJ-2019}, $n$ is even and  $\Gamma_N$ is one of the graphs in Table \ref{table-at-biquasi-2}.

\begin{table}[h]\caption{$(G/N,2)$-arc-transitive and $G/N$-bi-quasiprimitive  bicirculants (II)}\label{table-at-biquasi-2}
\begin{tabular}{llll}
\hline
  & Graphs   \\
\hline\hline
 1  &    $\K_{n,n}$, where $n\geq 2$ is even;  \\
 \hline
 2  &    $\K_{n,n}-n\K_2$, where $n\geq 3$ is even;          \\
\hline
 3  &    $B(\PG(d-1,q)$ and $B'(\PG(d-1,q))$ for even $d\geqslant 3$ and odd $q$.          \\
\hline
\end{tabular}

\end{table}

Finally,  suppose that (ii) holds, that is,
$B\cap H_0\neq \varnothing$ and $B\cap H_1\neq \varnothing$ for some $B\in \mathcal{B}$.
Then   $B'\cap H_0\neq \varnothing$ and $B'\cap H_1\neq \varnothing$ for every $B'\in \mathcal{B}$.
Hence, by  \cite[Lemma 3.2]{DGJ-2019},  $\Gamma_N$  is isomorphic to a  circulant of $HN/N$, and also Lemma \ref{quot-orbit-01}
says that $N$       has a cyclic index $2$ normal subgroup $H\cap N$.
Moreover, the 2-arc-transitivity of  $\Gamma_{N}$ indicates that $\Gamma_{N}$ is a graph in Theorem \ref{circulant-2arctrans-1},
and so  $\Gamma_{N}$  is one of the following graphs:

\begin{itemize}
\item[(1)]  $\K_n$ where $n\geq 3$;

\item[(2)]   $\K_{n,n}$ where  $n\geq 3$;

\item[(3)] $\K_{n,n}-n\K_2$ where  $n\geq 5$ is odd;

\item[(4)] the cycle $C_n$ where $n\geq 3$.

\end{itemize}

We conclude the proof.
\end{proof}

\bigskip

\bigskip

\section{Cyclic Covers }

This subsection is devoted to finding the cyclic covers of the basic $2$-arc-transitive bicirculants in case (1) of  Theorem \ref{bic-reduction-th2}.

\begin{lemma}\label{cyc-kn}
Suppose that  $\Gamma $ is a  $2$-arc-transitive bicirculant arising as a regular cyclic cover of the complete graph  $\K_{n}$ where $n\geq 3$. Then $\Gamma$
is either
\begin{itemize}
\item[(1)]  $\K_{n,n}-n\K_2$; or

\item[(2)]   $ X_1(4,q)$ where $q\equiv 3\pmod{4}$ and  $q=n-1$.

\end{itemize}

\end{lemma}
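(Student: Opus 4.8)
The plan is to combine the known classification of metacyclic (in particular cyclic) covers of complete graphs admitting a 2-arc-transitive fibre-preserving group, Theorem \ref{metacyclic-complete-th1}, with Lemma \ref{2at-dih-lem1} on reduction to prime cyclic covers, and then to check which of the resulting covers is in fact a bicirculant. Since $\Gamma$ is a 2-arc-transitive bicirculant arising as a regular cyclic cover of $\K_n$, some 2-arc-transitive group $G\leq \Aut(\Gamma)$ projects along the covering $\Gamma\mapsto \K_n$. The fibre-preserving subgroup of $G$ then acts 2-arc-transitively on $\Gamma$ with cyclic (hence metacyclic) covering transformation group $N$, so Theorem \ref{metacyclic-complete-th1} applies and forces $N$ to be one of $\mathbb{Z}_2$, $D_6$, $Q_{12}$, $D_{2d}$ or $Q_{2d}$; among these the only cyclic one is $N\cong \mathbb{Z}_2$, which by part (1) of that theorem yields $\Gamma\cong \K_{n,n}-n\K_2$.

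The above, however, only handles covers whose full covering transformation group is cyclic, whereas a priori the 2-arc-transitive group $G$ need not contain the entire cyclic covering group; more to the point, we must also allow $G$ not to be fibre-preserving in the stronger sense used in Theorem \ref{metacyclic-complete-th1}. So the second route I would take uses Lemma \ref{2at-dih-lem1}: replacing the cyclic cover by a prime cyclic cover $\Gamma'\mapsto \K_n$ with $N'\cong \mathbb{Z}_p$ and some 2-arc-transitive $G'\leq \Aut(\Gamma')$ projecting along it. For $p=2$ we again land on $\K_{n,n}-n\K_2$. For odd $p$, I would invoke Lemma \ref{lem-cent-1}: since $\Aut(\mathbb{Z}_p)$ is cyclic of order $p-1$ and the relevant stabilizer acts on the fibre, one analyzes whether the 2-arc-transitive group centralizes $N'$ or not. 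This is exactly the situation governing the graphs $X_1(4,q)$, $B(\PG(d-1,q))$-type covers, etc.; here, since the base is $\K_n = \K_{q+1}$ and $PGL(2,q)$ (or a subgroup) is the relevant 2-transitive group, the cover is $X_1(4,q)$ precisely when $q=n-1\equiv 3\pmod 4$, as recorded in Lemma \ref{lem-cov-comp-1}. One must check in the remaining cases that no connected cover survives (using Lemma \ref{lem-vanish}, that a voltage assignment vanishing on a spanning set of cycles gives a disconnected cover) or that the cover is already accounted for.

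The key remaining point is to verify that every cover produced by this analysis is actually a \emph{bicirculant}, i.e.\ that the cyclic group $H$ of order $n$ acting semiregularly on $\K_n$ with... wait, $\K_n$ has $n$ vertices, so the lift of a regular cyclic subgroup of $\Aut(\K_n)$ — an $n$-cycle — to the $p$-fold cover has order $np$ and, using that the covering is connected and $(p,n)$ considerations, one shows it is cyclic of order $np$ with exactly two orbits on the $2$-element... no: the cover of $\K_n$ by $\mathbb{Z}_p$ has $np$ vertices. For it to be a bicirculant we need a cyclic semiregular group with two orbits, i.e.\ of order $np/2$, which forces $p=2$ unless $n$ is even; for $\K_{n,n}-n\K_2$ with $N\cong\mathbb{Z}_2$ this is the cyclic group $\mathbb{Z}_{n}$ acting with two orbits of size $n$, and for $X_1(4,q)$ it is the group $T\cong\mathbb{Z}_{2(q+1)}$ exhibited in the proof of Lemma \ref{lem-cov-comp-1} (note $|V(X_1(4,q))| = 4(q+1) = 2\cdot 2(q+1)$, so $\Gamma'$ must be the $4$-fold cover, obtained from the prime $\mathbb{Z}_2$-case by iterating). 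Hence the bicirculant condition is what cuts down the list from all 2-arc-transitive cyclic covers of $\K_n$ to just these two possibilities, and this verification is the main obstacle: one has to track the lift of the $n$-cycle carefully through the covering and confirm semiregularity with exactly two orbits, appealing to Lemma \ref{quo-1} and the explicit descriptions of $\K_{n,n}-n\K_2$ and $X_1(4,q)$ given above.
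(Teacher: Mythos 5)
Your proposal has a genuine gap: it never actually establishes the classification of connected $2$-arc-transitive regular cyclic covers of $\K_n$, which is the entire content of the lemma. The paper's proof is a direct appeal to Theorem~1.1 of Du--Maru\v{s}i\v{c}--Waller \cite{DMW-1998} (2-arc-transitive covers of complete graphs with cyclic covering transformation group are exactly $\K_{n,n}-n\K_2$ and $X_1(4,q)$ with $q=n-1\equiv 3\pmod 4$), followed by the easy verification that both graphs are bicirculants (Lemma~\ref{lem-cov-comp-1} for $X_1(4,q)$, and the semiregular cyclic $\mathbb{Z}_n\leq S_n\times S_2$ for $\K_{n,n}-n\K_2$). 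Your first route, via Theorem~\ref{metacyclic-complete-th1}, cannot substitute for this: if that theorem were applicable verbatim to cyclic covering groups it would leave only $N\cong\mathbb{Z}_2$ and hence exclude $X_1(4,q)$ altogether, contradicting Lemma~\ref{lem-cov-comp-1} and conclusion (2) of the very statement you are proving. That theorem is the tool for the nonabelian metacyclic case (as it is used in Lemma~\ref{metacycl-lem-index2-3}); for cyclic covering groups the required input is \cite{DMW-1998}, which your argument never supplies.

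Your second route (reduce to a prime cyclic cover by Lemma~\ref{2at-dih-lem1}, then argue with Lemma~\ref{lem-cent-1} and Lemma~\ref{lem-vanish}) is only a sketch of how one might re-prove the Du--Maru\v{s}i\v{c}--Waller classification from scratch: you do not carry out the analysis showing that no connected $2$-arc-transitive $\mathbb{Z}_p$-cover of $\K_n$ exists for odd $p$, nor do you explain how to recover the original cover from its prime quotient ($X_1(4,q)$ is a $\mathbb{Z}_4$-cover, so it is not itself reached by the prime reduction, and the ``iterate the $\mathbb{Z}_2$-case'' step is not developed). Finally, the closing logic is inverted: you assert that the bicirculant hypothesis is what cuts the list of $2$-arc-transitive cyclic covers of $\K_n$ down to the two families, but in fact the cyclic classification already consists of exactly these two graphs and both are bicirculants, so the bicirculant condition eliminates nothing here; relatedly, the parity argument ``a cyclic semiregular group of order $np/2$ forces $p=2$ unless $n$ is even'' is not a correct or needed step (for $X_1(4,q)$ the semiregular cyclic group of order $2(q+1)$ arises from the Cayley structure over $\langle r,t\rangle$, not as a lift of an $n$-cycle). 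What is missing, concretely, is the classification theorem itself or a complete proof of it.
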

\proof  Since $\Gamma $ is a  $2$-arc-transitive cyclic cover of the complete graph $\K_n$ where $n\geq 3$, it follows from
\cite[Theorem 1.1]{DMW-1998} that $\Gamma$
is either  $\K_{n,n}-n\K_2$ or $ X_1(4,q)$ where $q=n-1$.
Moreover, for the graph $ X_1(4,q)$,  Lemma \ref{lem-cov-comp-1} says that    it is a 2-arc-transitive bicirculant over a cyclic group of order  $2(q+1)$. For the graph $\K_{n,n}-n\K_2$, its automorphism group is  $ S_n\times S_2$ which has a cyclic subgroup of order $n$, and this cyclic subgroup  is semiregular on the vertex set, so
$\K_{n,n}-n\K_2$ is also a 2-arc-transitive bicirculant.
 \qed

\begin{lemma}\label{cyccov-petersen}

Suppose that  $\Gamma $ is a  $2$-arc-transitive bicirculant arising as a regular cyclic cover of the Petersen  graph. Then $\Gamma$
is either the  Desargues graph or the dodecahedron graph.
\end{lemma}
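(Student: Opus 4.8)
The plan is to mimic the structure of the preceding lemma (Lemma \ref{cyc-kn}) and reduce immediately to a known classification of $2$-arc-transitive cyclic covers of the Petersen graph. First I would invoke Lemma \ref{2at-dih-lem1}: since $\Gamma$ is a regular cyclic cover of the Petersen graph along which a $2$-arc-transitive group projects, there is a regular \emph{prime} cyclic cover $\Gamma'\to P$ (where $P$ denotes the Petersen graph) with a $2$-arc-transitive group $G'\leq \Aut(\Gamma')$ projecting along it. So it suffices to analyze prime cyclic covers, i.e.\ covering group $N\cong \mathbb{Z}_p$ for a prime $p$.

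Next I would determine which primes $p$ can occur and what the cover is. The Petersen graph has $10$ vertices, $15$ edges, and cycle rank $15-10+1=6$, so a reduced voltage assignment uses $6$ free voltages in $\mathbb{Z}_p$; up to the action of $\Aut(P)=S_5$ and automorphisms of $\mathbb{Z}_p$ one has finitely many cases to check, and the key constraint is that the lift of a $2$-arc-transitive subgroup of $\Aut(P)$ must act $2$-arc-transitively on $\Gamma'$. The relevant arc-transitive (indeed $3$-arc-transitive) action on $P$ is that of $S_5$ with point stabilizer $S_3\times S_2$. I would use Lemma \ref{lem-cent-1}: if the lift of $\alpha\in\Aut(P)$ centralizes $N$ (automatic when $N$ is abelian and the lift is chosen appropriately modulo the obstruction analyzed there), then $f(W^\alpha)=f(W)$ for every closed walk $W$, so the voltage assignment is constant on $\Aut(P)$-orbits of cycles in the cycle space. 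Running this through the cycle structure of $P$ (its girth-$5$ pentagons span the cycle space) forces $p\in\{2,3\}$: the case $p=2$ gives the bipartite double cover of $P$, which is the Desargues graph $GP(10,3)$, and the case $p=3$ gives a $\mathbb{Z}_3$-cover on $30$ vertices, which is the dodecahedron $GP(10,2)$. For any other $p$, Lemma \ref{lem-vanish} shows the putative cover is disconnected or the lifted group fails to be $2$-arc-transitive, completing the reduction. Alternatively — and this is probably the cleanest route — I would simply cite the classification of $2$-arc-transitive cyclic covers of the Petersen graph (e.g.\ from the literature on covers of small arc-transitive graphs, in the same spirit as \cite{DMW-1998} is used for $\K_n$), which directly yields Desargues and dodecahedron.

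Finally I would verify that both conclusions are genuinely $2$-arc-transitive bicirculants, which is needed for the lemma to assert an "either/or" among bicirculants and not merely among graphs. The Desargues graph is $GP(20,\cdot)$-like: it is $GP(10,3)$, vertex-transitive with $\Aut=S_5\times\mathbb{Z}_2$ of order $240$, is $3$-arc-transitive, and carries a cyclic semiregular automorphism of order $10$ with exactly two orbits of length $10$ (the standard bicirculant structure of a generalized Petersen graph), hence is a bicirculant; and it is well known to be $2$-arc-transitive. The dodecahedron $GP(10,2)$ likewise has $\Aut=A_5\times\mathbb{Z}_2$, is $2$-arc-transitive by the remark in Section 2 (intersection array $(3,2,1,1,1;1,1,1,2,3)$), and has the generalized-Petersen bicirculant structure given by a semiregular $\mathbb{Z}_{10}$. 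So both appear in the list and the lemma follows.

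\textbf{Main obstacle.} The hard part is the enumeration step: showing that no prime $p\geq 5$ yields a connected $2$-arc-transitive cover. This requires either a careful hands-on analysis of voltage assignments on the $6$-dimensional cycle space of $P$ modulo the $S_5\times\Aut(\mathbb{Z}_p)$ action (using Lemmas \ref{lem-cent-1} and \ref{lem-vanish} to kill most configurations), or a direct appeal to an existing classification theorem for cyclic covers of the Petersen graph. I expect the paper takes the latter, citation-based route, exactly as it did for $\K_n$ in Lemma \ref{cyc-kn}.
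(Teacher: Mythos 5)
Your fallback suggestion (cite an existing classification, as the paper does for $\K_n$ via \cite{DMW-1998}) is exactly the paper's route: it quotes \cite[Theorem 4.4]{FengKwak-10p} (cubic symmetric graphs of order $10p$ or $10p^2$) to get that the only $2$-arc-transitive cyclic covers of the Petersen graph are the Desargues graph and the dodecahedron, and then, just as you do, observes that these are $GP(10,3)$ and $GP(10,2)$ with automorphism groups $S_5\times S_2$ and $A_5\times S_2$, hence bicirculants. That closing verification in your write-up is correct and matches the paper.

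However, your primary, hands-on derivation has genuine problems. First, the enumeration is factually wrong: the dodecahedron has $20$ vertices and is the \emph{antipodal double cover} of the Petersen graph, i.e.\ a $\mathbb{Z}_2$-cover, not a $\mathbb{Z}_3$-cover on $30$ vertices as you claim; correspondingly, $p=2$ does not give only the bipartite double cover (Desargues) — both exceptional graphs arise from voltage assignments in $\mathbb{Z}_2$, and your asserted split ``$p=2$ gives Desargues, $p=3$ gives the dodecahedron'' cannot be made to work. Indeed, the centralizer argument you invoke (lifted simple group centralizes $N$, so voltages are constant on orbits of cycles, and cycle reversal gives $c=-c$) would force all pentagon voltages to vanish for every odd $p$, so the claim ``forces $p\in\{2,3\}$'' is not just unproved but inconsistent with your own mechanism. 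Second, the reduction via Lemma \ref{2at-dih-lem1} to prime cyclic covering groups only yields non-existence statements: when covers \emph{do} exist (as here), classifying the prime cases does not exclude composite cyclic covering groups such as $\mathbb{Z}_4$ or $\mathbb{Z}_6$, so your argument as structured cannot conclude that the two listed graphs exhaust all regular cyclic covers. Both gaps are bypassed by the citation the paper actually uses, which classifies all such covers at once.
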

\proof Since  $\Gamma $ is a  $2$-arc-transitive graph arising as a regular cyclic   cover of the Petersen  graph, it follows from
 \cite[Theorem 4.4]{FengKwak-10p} that
$\Gamma$ is either the dodecahedron graph   or the Desargues graph (the standard double cover of the Petersen  graph).

Moreover, the dodecahedron graph is the generalized Petersen graph $GP(10,2)$ which  has automorphism group $A_5\times S_2$,
and the Desargues graph
is the generalized Petersen graph $GP(10,3)$ which has automorphism group $S_5\times S_2$ (see for instance \cite[p.217]{FGW-1971}). Thus
both of these two graphs are  $2$-arc-transitive bicirculants.
 \qed

\begin{lemma}\label{cyc-knn}


Let  $\Gamma \cong \K_{n,n}$ where $n\geq 3$.
Then  $\Gamma$ does not have  $2$-arc-transitive  regular cyclic   covers as bicirculants.


\end{lemma}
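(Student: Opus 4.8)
The plan is to show that no connected $2$-arc-transitive regular cyclic cover $\Gamma$ of $\K_{n,n}$ ($n\geq 3$) can itself be a bicirculant. By Lemma~\ref{2at-dih-lem1} I may assume the covering group is $N\cong\mathbb{Z}_p$ for a prime $p$; a cover with composite cyclic covering group exists only if a prime cover does, and if the prime cover fails to be a bicirculant it is harmless to reduce there, but more importantly I want to use the rigidity of prime covers. So first I would fix $Y=\K_{n,n}$, write $V(Y)=\{a_1,\dots,a_n\}\cup\{b_1,\dots,b_n\}$, pick a spanning tree (a star at $a_1$ together with the edges $a_1b_j$, say a double star), and put the voltage assignment $f$ in reduced form with values in $\mathbb{Z}_p$, so that $f(a_1,b_j)=0$ for all $j$ and $f(a_i,b_1)=0$ for all $i$, with the remaining voltages $f(a_i,b_j)=:c_{ij}\in\mathbb{Z}_p$ for $i,j\geq 2$ the only free parameters.

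Next I would exploit the $2$-arc-transitivity. Let $G^+\leq\Aut(\K_{n,n})$ be the index-$2$ subgroup fixing the two parts; $2$-arc-transitivity of the lift forces $G^+$ to be $2$-transitive on each part, so $G^+$ contains $A_n$ on both sides and acts on the cycle space in a way compatible with the voltages. The relevant $4$-cycles $(a_i,b_j,a_k,b_\ell,a_i)$ span the cycle space of $\K_{n,n}$; their voltages are $c_{ij}-c_{kj}+c_{k\ell}-c_{i\ell}$ (with the convention $c_{1j}=c_{i1}=0$). Applying Lemma~\ref{lem-cent-1} to the lifts of $A_n\times A_n$ (or at least a large cyclic subgroup thereof), which centralize the abelian $N$, I get that these $4$-cycle voltages are invariant under the induced permutation action on $4$-cycles. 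Combined with connectivity (Lemma~\ref{lem-vanish}: not all generating cycles vanish), this pins down $f$ up to equivalence: the matrix $(c_{ij})$ must be, after normalization, of the form $c_{ij}=x_iy_j$ or a rank-one/constant-type pattern, and transitivity forces all nonzero entries equal. A short computation then shows the only connected possibility is $p=2$ with $c_{ij}=1$ for all $i,j\geq 2$, i.e. $\Gamma$ is the standard double cover $\K_{n,n}\times K_2$, which is $\K_{n,n}-n\K_2$ if one is careless but in fact is the bipartite double $\cong C_4\,\square\,\cdots$ — the point is that this graph has $4n$ vertices and its automorphism group is $(S_n\times S_n):C_2\times C_2$ or similar.

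The crux, and the step I expect to be the main obstacle, is the final counting/structure argument: having identified $\Gamma$ as (essentially) the unique connected $2$-arc-transitive $\mathbb{Z}_p$-cover of $\K_{n,n}$, I must verify that it is \emph{not} a bicirculant, i.e. that $\Aut(\Gamma)$ contains no element of order $2n$ (the number of vertices is $4n$) that is fixed-point-free with exactly two cycles. This I would do by analyzing $\Aut(\Gamma)$ directly: the group is $(S_n\wr S_2)\times$(something small) acting imprimitively with blocks the fibres of size $p$, and one checks that any semiregular element with two orbits would project to a semiregular element of $\Aut(\K_{n,n})$ with two orbits of size $n$ — but an order-$n$ element of $S_n\times S_2$ fixed-point-free on $2n$ points with two $n$-cycles must interchange the two parts (as in Theorem~\ref{circulant-2arctrans-1}(2)), forcing $n$ odd; meanwhile the cover structure then demands compatibility with the voltages that fails. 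The bookkeeping here is delicate because one has to track how the order-$2$ "swap" interacts with the lift of the covering transformation $N$, and rule out the possibility for every $n\geq 3$ simultaneously. I would organize it by cases on the parity of $n$ and on whether the putative semiregular element preserves or swaps the bipartition, deriving a contradiction with connectivity in each case.
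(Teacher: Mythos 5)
Your reduction to a prime cover and your use of Lemma \ref{lem-cent-1} together with Lemma \ref{lem-vanish} do echo part of the paper's argument, but the proposal has a genuine gap at its very first structural step: from the $2$-transitivity of $G^+$ on each part you conclude that $G^+$ contains $A_n$ on both sides. That is false; a $2$-transitive group of degree $n$ need not contain $A_n$ — it can be almost simple of many other types or, crucially, of affine type with elementary abelian socle. Everything downstream depends on this: the centralizing hypothesis needed to invoke Lemma \ref{lem-cent-1} (that the lift of a large subgroup centralizes $N$) is justified in the paper only because the relevant subgroup $T_U\times T_W$ is a product of nonabelian simple groups, hence has no nontrivial abelian quotient; and the transitivity on $4$-cycles also uses $2$-transitivity of the socles themselves. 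When the socle is elementary abelian this machinery breaks down, and in fact your intended conclusion is wrong there: connected $2$-arc-transitive $\mathbb{Z}_p$-covers of $\K_{n,n}$ with nontrivial voltage do exist (for example the graphs $X(1,p)$ covering $\K_{p,p}$ in the classification of Xu and Du cited as \cite{XD-2018} in the paper), so one cannot "pin down $f$" to a single $p=2$ cover and the lemma cannot be proved by showing all covers are disconnected or trivial. (Incidentally, the bipartite double cover $\K_{n,n}\times \K_2$ is disconnected, so the candidate you single out is not even the right object.)

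The paper's proof is organized precisely around this dichotomy: by Burnside's theorem the $2$-transitive groups $G^+/G^+_U$ and $G^+/G^+_W$ have socle either nonabelian simple or elementary abelian. In the nonabelian simple case the argument you sketch (lift centralizes $\mathbb{Z}_p$, all $4$-cycles get equal voltage, the relation $\psi(C_0)=\psi(C_1)+\psi(C_2)$ forces all voltages to vanish, contradiction with connectivity via Lemma \ref{lem-vanish}) goes through, modulo excluding $n=28$ via \cite{XD-2018}. In the elementary abelian case one must instead use the bicirculant hypothesis early: the cyclic group $H$ of order $n=q^t$ acting regularly on each part gives a regular cyclic subgroup inside an affine $2$-transitive group, which by \cite[Lemma 4.1(ii)]{DMM-2008} forces $n=q$ prime or $n=4$; for $n$ prime the cover is $X(1,p)$, which is then shown (via \cite{XD-2018}) not to be a bicirculant, and $n=4$ is excluded using \cite{SLZ-2014}. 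Your proposal is missing this second case entirely, and the case it does treat rests on the unjustified (and generally false) claim $A_n\times A_n\leq G^+$; as written it would "prove" the nonexistence of all connected $2$-arc-transitive cyclic covers of $\K_{n,n}$, which contradicts known examples.
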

\proof  Suppose to the contrary that  $\Gamma\cong \K_{n,n}$ where $n\geq 3$, has   a 2-arc-transitive regular cyclic cover $\overline{\Gamma}$ which is  a bicirculant.
Then   by Lemma \ref{2at-dih-lem1}, we can assume  that the covering transformation group
$N$ is a cyclic group $\mathbb{Z}_p$ for some prime $p$.

By Theorem \ref{bic-reduction-th2},
either $\Gamma$ is a  bicirculant over  a cyclic group of order $n$, and  the two bipartite halves of $\Gamma$ are orbits of this group or
$\Gamma$ is a circulant over a cyclic group $\mathbb{Z}$ of order $2n$. If the latter holds, then
$\Gamma$ is a bicirculant over the   index 2 normal cyclic subgroup $\mathbb{Z}'$ of  $\mathbb{Z}$ of order $n$, and
the two bipartite halves of $\Gamma$ are $\mathbb{Z}'$-orbits.
Thus the graph  $\Gamma$ always can be considered as  a  bicirculant over  a cyclic group of order $n$, and say this cyclic group $H$,
and
the two bipartite halves of $\Gamma$ are $H$-orbits. Let the two bipartite halves of  $\Gamma$
be $U$ and $W$.

Since $\overline{\Gamma}$ is a cover of  $\Gamma$ which  is a bipartite graph, and since each $N$-orbit does not contain any edge of $\overline{\Gamma}$, it follows that $\overline{\Gamma}$ is also a  bipartite graph.
And let $\overline{U}$ and $\overline{W}$ be the two bipartite halves of  $\overline{\Gamma}$ with respect   to
$U$ and $W$, respectively.

In light of the fact that  $\Gamma$ is $2$-arc-transitive, $\Aut(\Gamma)$ has a subgroup  $G$    that acts transitively on the set of 2-arcs  of $\Gamma$, and let $G^+\leq G$  be the corresponding index 2 normal subgroup of $G$ fixing $U$ and $W$ setwisely. Let $G^+_U$  be the kernel of the action of $G^+$ on $U$
and let  $G^+_W$ be the kernel of the action of $G^+$ on  $W$. Then $G^+_U$   fixes the set $U$   pointwisely and $G^+_W$  fixes the set $W$ pointwisely. Moreover, $G^+_U\cong G^+_W$ and $G^+_U\cap G^+_W=1$.

Since $\Gamma$ is $(G,2)$-arc-transitive, it follows that for each vertex $w\in  W$, the vertex stabilizer $G_w = G^+_w$ is 2-transitive on $\Gamma(w)=U$.
Note that  $G^+_w\cap G^+_U$  is the kernel of the action of $G^+_w$ on $U$. Hence
 $G^+_wG^+_U/G^+_U\cong G^+_w/(G^+_w\cap G^+_U)$ is 2-transitive on $U$. Thus $G^+/G^+_U$ acts 2-transitively on $U$.
By a similar argument,
we can show that $G^+/G^+_W$ acts 2-transitively on $W$.

A result of Burnside \cite{Burnside} shows that every 2-transitive group has a
unique minimal normal subgroup that is either elementary abelian or
nonabelian simple.
The fact  $G^+_U\cap G^+_W=1$ leads to  that
$G^+_U\cong G^+_U/(G^+_U\cap G^+_W)  \cong G^+_U G^+_W/G^+_W$  is a normal subgroup of $G^+/G^+_W$, and so $G^+_U$ contains a normal subgroup $T_U\cong  soc(G^+/G^+_W)$. Similarly, $G^+_W$ contains a normal subgroup $T_W\cong soc(G^+/G^+_U)$. Moreover, it is easy to see that $T_U\cong T_W$.

Suppose that $\overline{\Gamma} = \Gamma \times_\psi \mathbb{Z}_p$  and such that $G$ lifts. Denote the lifts of $G$ and $G^+$ by $\overline{G}$ and $\overline{G^+}$,  respectively. We argue the proof depending on whether $T_U$ is nonabelian simple or elementary abelian.

Assume that $T_U$ is a nonabelian simple group.
Then by \cite[Theorem 1.2]{XD-2018}, $n\neq 28$, and so   $T_U$ and $T_W$ act 2-transitively on $W$ and $U$, respectively. Let  $T = T_U \times T_W$.
Assume that  the group $T$ is lifted along the covering projection to the group $\overline{T}\leq   \Aut(\overline{\Gamma})$. Then $\overline{T}/N\cong T$. Since $N\cong \mathbb{Z}_p$ is abelian, it follows that $N\leq C_{\overline{T}}(N)$ and $C_{\overline{T}}(N)$ is a normal subgroup of $\overline{T}=N_{\overline{T}}(N)$. Thus
$(\overline{T}/N)/(C_{\overline{T}}(N)/N)\cong \overline{T}/C_{\overline{T}}(N)=N_{\overline{T}}(N)/C_{\overline{T}}(N)\leq \Aut(N)\cong \mathbb{Z}_{p-1}$,
and so  $\overline{T}/N=T$ has an abelian quotient.

However, as $T_U$ and $T_W$ are nonabelian simple groups, it follows that the only nontrivial normal  proper subgroups of $T$ are $T_U$ and $T_W$. Therefore each nontrivial quotient of $T$ must be nonabelian,
and hence $\overline{T}/N=C_{\overline{T}}(N)/N$,
that is, $\overline{T}=C_{\overline{T}}(N)$. Consequently,
the group $T$ lifts to the group $\overline{T}$ centralizing $N$. Then it follows from Lemma \ref{lem-cent-1} that in the action of $T$ on the set of closed walks of $\Gamma$, the voltages of closed
walks of $\Gamma$ are the same  in  each orbit of $T$. Moreover,
since   $T_U$ acts 2-transitively on $W$   and $T_W$ acts 2-transitively on  $U$,
it follows that   $T$ acts transitively on the set of all 4-cycles in $\Gamma$, and so all the  4-cycles
of $\Gamma$ have the same voltage. Pick three vertices $u_0,u_1,u_2\in  U$ and two vertices $w_0,w_1\in  W$. Then  in the subgraph induced by $\{u_0,u_1,u_2,w_0,w_1 \}$, we have three 4-cycles:
$C_0 = (u_0,w_0,u_1,w_1,u_0), C_1 = (u_0,w_0,u_2,w_1,u_0)$ and $	C_2 = (u_1,w_1,u_2,w_0,u_1)$.
Moreover, $C_1 $ has arcs $\{(u_0,w_0),(w_0,u_2),(u_2,w_1),$ $(w_1,u_0)\}$
and $C_2 $ has arcs $\{(u_1,w_1),(w_1,u_2),(u_2,w_0),$ $(w_0,u_1)\}$, and further
$\psi((w_0,u_2))+\psi((u_2,w_0))=0$ and $\psi((u_2,w_1))+\psi((w_1,u_2))=0$.
Thus $\psi(C_0)=\psi(C_1) + \psi(C_2) = \phi(C_1+C_2)$,
and so  $\psi(C_2) = \psi(C_0) - \psi(C_1) = 0$. Therefore the 4-cycles of $\Gamma$ have trivial voltage. Since 4-cycles span the cycle space of $\Gamma$, it follows from Lemma \ref{lem-vanish}  that the graph $\overline{\Gamma}$ is disconnected, a contradiction.

It remains to consider the case that  $T_U$ is an elementary abelian group, and say $T_U\cong \mathbb{Z}^t_q$, where $q$ is a prime and $n = q^t$ for some $t$.
Then $\Gamma$ has $2q^t$ vertices.
Since $G^+/G^+_W$ acts 2-transitively on $W$ and $T_U\cong  soc(G^+/G^+_W)$,  it follows that $T_U$ is regular on $W$, and hence by
 \cite[Proposition 4.4]{Wielandt-book},  $T_U=C_{G^+/G^+_W}(T_U)$.
Thus the quotient group $(G^+/G^+_U)/T_U=(G^+/G^+_U)/C_{G^+/G^+_W}(T_U)$ is a subgroup of $\Aut(T_U)\cong GL(t,q)$, and so
 $G^+/G^+_W$ is a subgroup of $AGL(t,q)=\mathbb{Z}^t_q:GL(t,q)$.

Since   $\Gamma$ is  a bicirculant over  a cyclic group $H$ of order $n=q^t$ and
the two bipartite halves are $H$-orbits,
the group $G^+/G^+_W$ contains an element of order $q^t$, and by \cite[Lemma 4.1 (ii)]{DMM-2008},  we have $n = q$ or $n = 4$.

If   $n = q$ is a prime, then by \cite[Theorem 1.2]{XD-2018}, $\Gamma$ is the graph $X(1,p)$ which is not a bicirculant, a contradiction.
Assume  that $n=4$ and $q=t = 2$. Then
$G^+/G^+_W$ is a subgroup of $AGL(2,2)=\mathbb{Z}^2_2:GL(2,2)$.
Note that  $G^+/G^+_U$ acts 2-transitively on $W$ and  contains an element of order $4$. However, by   \cite[Theorem 3.1]{SLZ-2014} (see also \cite{Jones-2002},\cite{LCH-abelianregular-2003}),  such a
2-transitive group does not exist.
We conclude the proof.
 \qed

\medskip
The following lemma investigates  $2$-arc-transitive bicirculants arising as regular cyclic covers of the graph $B'(H_{11})$, and
the argument  of    $2$-arc-transitive bicirculants arising as regular cyclic covers of the graph  $B(H_{11})$ is similar.

\begin{lemma}\label{cyc-H11}


There are no $2$-arc-transitive bicirculants arising as regular cyclic covers of the graphs  $B'(H_{11})$ or $B(H_{11})$.
\end{lemma}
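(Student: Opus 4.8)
Throughout write $\Gamma$ for $B'(H_{11})$; the case $\Gamma=B(H_{11})$ is handled word for word, the only difference being that the valency is $5$ rather than $6$ and valency plays no role in what follows. Suppose, for a contradiction, that $\overline\Gamma$ is a connected $2$-arc-transitive bicirculant arising as a regular cyclic cover of $\Gamma$. As in the proof of Lemma~\ref{cyc-knn}, by Lemma~\ref{2at-dih-lem1} we may assume that the covering transformation group is $N\cong\mathbb{Z}_p$ for a prime $p$, that $\overline\Gamma=\Gamma\times_\psi\mathbb{Z}_p$, and that a $2$-arc-transitive group $G\le\Aut(\Gamma)$ lifts to a $2$-arc-transitive $\overline G\le\Aut(\overline\Gamma)$ with $N\lhd\overline G$.

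The first step is to pin down $G$. Since $\Gamma$ is bipartite with parts $\Delta_1,\Delta_2$ of size $11$ and the index-$2$ subgroup $G^+$ preserves $\Delta_1$ and $\Delta_2$, arc-transitivity forces $G^+$ to be $2$-transitive on each $\Delta_i$ and $G^+_v$ to be $2$-transitive on $\Gamma(v)$; hence $|G^+_v|\ge 60$ and $|G^+|\ge 660$. Among the $2$-transitive groups of degree $11$, the affine ones have point stabiliser of order at most $10$ (so cannot be arc-transitive on a graph of valency $5$ or $6$), while the orbital bipartite graphs of $M_{11}$ on $\Delta_1\sqcup\Delta_2$ have valency $1$ or $10$; therefore $G^+\cong\PSL(2,11)$, acting on each $\Delta_i$ with point stabiliser $\cong A_5$, and $G^+$ is transitive on $E(\Gamma)$. (One also checks $\Aut(\Gamma)=\PSL(2,11).2$, so $G^+\cong\PSL(2,11)$ in any case.)

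Since $\PSL(2,11)$ is perfect it has no non-trivial abelian quotient, so the central-extension argument already used in Lemmas~\ref{lem-cov-k55} and~\ref{lem-cov-comp-minus1} shows that the lift $\overline{G^+}$ of $G^+$ equals $C_{\overline{G^+}}(N)$; that is, $G^+$ lifts centralising $N$. By Lemma~\ref{lem-cent-1}, $\psi(W^\alpha)=\psi(W)$ for every closed walk $W$ of $\Gamma$ and every $\alpha\in G^+$, so the homomorphism $\overline\psi\colon H_1(\Gamma;\mathbb{Z})\to\mathbb{Z}_p$ recording the voltage of each cycle is $G^+$-invariant and therefore factors through the coinvariants $H_1(\Gamma;\mathbb{Z})_{G^+}$. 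As $\overline\Gamma$ is connected, $\overline\psi$ is onto, so it suffices to prove
$$H_1(\Gamma;\mathbb{Z})_{G^+}=0.$$

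I would deduce this from the $G^+$-equivariant cellular chain complex $0\to C_1\xrightarrow{\partial}C_0\to 0$ of $\Gamma$. Since $G^+$ preserves the bipartition it fixes an orientation of each edge, so $C_1\cong\mathbb{Z}[E(\Gamma)]$ is a transitive permutation module and $C_0\cong\mathbb{Z}[\Delta_1]\oplus\mathbb{Z}[\Delta_2]$ has exactly the two orbits $\Delta_1,\Delta_2$, both with stabiliser $\cong A_5$. Splitting the exact sequences $0\to H_1(\Gamma;\mathbb{Z})\to C_1\to B\to 0$ and $0\to B\to C_0\to\mathbb{Z}\to 0$ with $B=\partial C_1$, and passing to $G^+$-homology: because $\PSL(2,11)$ and $A_5$ are perfect, $H_1(G^+;\mathbb{Z})=0$ and, by Shapiro's lemma, $H_1(G^+;C_0)=H_1(A_5;\mathbb{Z})^{\oplus 2}=0$; a short diagram chase then gives $B_{G^+}\cong\mathbb{Z}$, shows that $(C_1)_{G^+}\cong\mathbb{Z}\to B_{G^+}\cong\mathbb{Z}$ is an isomorphism, and hence that $H_1(\Gamma;\mathbb{Z})_{G^+}$ is a quotient of $H_1(G^+;B)$, which in turn is the cokernel of the corestriction $H_2(A_5;\mathbb{Z})^{\oplus 2}\to H_2(G^+;\mathbb{Z})$. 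Finally $H_2(G^+;\mathbb{Z})\cong\mathbb{Z}_2$ and $H_2(A_5;\mathbb{Z})\cong\mathbb{Z}_2$, and the corestriction is onto because, composed with the transfer, it is multiplication by $[G^+:A_5]=11$, which is invertible modulo $2$. Thus $H_1(\Gamma;\mathbb{Z})_{G^+}=0$, so $\overline\psi=0$, every cycle of $\Gamma$ has trivial voltage, and Lemma~\ref{lem-vanish} forces $\overline\Gamma$ to be disconnected --- a contradiction. The main obstacle is exactly this homological step; everything before it is routine. (For $\Gamma=B'(H_{11})$ one can avoid it: two points of the underlying $2$-design lie on $\lambda=3$ common blocks, so $\Gamma$ contains a $\K_{2,3}$, $G^+$ is transitive on the three resulting $4$-cycles, and the cancellation trick of Lemma~\ref{cyc-knn} forces their common voltage to $0$, provided one knows that $4$-cycles span the cycle space of $\Gamma$; but for $B(H_{11})$ the biplane has $\lambda=2$, there is no $\K_{2,3}$ and no relation among $4$-cycles alone, and this shortcut breaks down, which is why I rely on the coinvariants computation above.)
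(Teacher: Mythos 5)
Your proposal is correct, and its first half coincides with the paper's: reduce to a $\mathbb{Z}_p$-cover via Lemma \ref{2at-dih-lem1}, identify $G^+\cong \PSL(2,11)$ with vertex stabilizers $A_5$, and use simplicity of $\PSL(2,11)$ versus $\Aut(\mathbb{Z}_p)\cong\mathbb{Z}_{p-1}$ to force the lift $\overline{G^+}$ to centralize $N$. After that the two arguments genuinely diverge. The paper stays group-theoretic: since $\PSL(2,11)$ is perfect, either $N$ sits in the Schur multiplier, giving $\overline{G^+}\cong \SL(2,11)$, which is excluded because its unique involution leaves no room for an $A_5$ vertex stabilizer, or $\overline{G^+}\cong \PSL(2,11)\times N$, which is excluded by analysing the orbits of the $\PSL(2,11)$ direct factor (a fixed-point argument when it is transitive on a bipartite half, a bi-coset-graph connectivity argument when it is not). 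You instead stay in the voltage picture: the central lift plus Lemma \ref{lem-cent-1} makes the voltage homomorphism $G^+$-invariant on the cycle space, and your coinvariants computation $H_1(\Gamma;\mathbb{Z})_{G^+}=0$ (Shapiro's lemma, perfectness of $A_5$ and $\PSL(2,11)$, $H_2(A_5)\cong H_2(\PSL(2,11))\cong\mathbb{Z}_2$, and surjectivity of corestriction since the index $11$ is odd) forces all cycle voltages to vanish, so Lemma \ref{lem-vanish} gives disconnectedness; I checked the two long exact sequences and the transfer step, and they are sound, with $(C_1)_{G^+}\to B_{G^+}$ indeed an isomorphism because $G^+$ is edge-transitive and preserves the bipartition. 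It is notable that the same Schur-multiplier data drives both proofs, but in different roles. Your route buys uniformity (it treats $B(H(11))$ and $B'(H(11))$ identically, with no case analysis, and actually proves the stronger statement that no connected $2$-arc-transitive $\mathbb{Z}_p$-cover exists at all, the bicirculant hypothesis being unused --- as it also is in the paper), at the price of group-homology machinery the paper avoids. One cosmetic slip: for the valency-$5$ graph $B(H(11))$ a $2$-transitive stabilizer action only forces $|G^+_v|\geq 20$, not $60$ (the Frobenius group of order $20$ is $2$-transitive of degree $5$); this is immaterial, since $|G^+|\geq 220$ together with $G^+\leq \PSL(2,11)$ already forces $G^+=\PSL(2,11)$, as your appeal to $\Aut(\Gamma)=\PSL(2,11).2$ records.
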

\proof By Lemma \ref{2at-dih-lem1}, it suffices to show that for any   prime $p$, there are  no such regular $\mathbb{Z}_p$-covers.
Suppose to the contrary that  $\Gamma$ is a
regular $\mathbb{Z}_p$-cover of $B'(H_{11})$ for some prime $p$. Let $\Sigma=B'(H_{11})$ and $N=\mathbb{Z}_p$.
Then  $\Sigma$ is a bipartite graph. Let the two bipartite halves of  $\Sigma$
be $U$ and $W$, and let $G^+$ be the setwise stabilizer of the bipartite halves in $G:=\Aut(\Sigma)$. Then $G^+\cong PSL(2,11)$ is an  normal subgroup of
$G$ such that $|G:G^+|=2$. Let $\overline{G^+}$ be the lift of $G^+$ in $\Aut(\Gamma)$. Then $\overline{G^+}/N\cong G^+$.

In light of the fact that  $N=\mathbb{Z}_p$, we have $N\unlhd C_{\overline{G^+}}(N)$. Moreover,
$C_{\overline{G^+}}(N)/N$ is a normal subgroup of $N_{\overline{G^+}}(N)/N=\overline{G^+}/N=G^+$. As $G^+\cong PSL(2,11)$ is a nonabelian simple group, it follows that
$C_{\overline{G^+}}(N)/N=N_{\overline{G^+}}(N)/N$ or 1. Consequently
$C_{\overline{G^+}}(N)=N_{\overline{G^+}}(N)$ or $N$. Suppose that $C_{\overline{G^+}}(N)=N$. Then
$\overline{G^+}/N=\overline{G^+}/C_{\overline{G^+}}(N)\leq \Aut(N)\cong \mathbb{Z}_{p-1}$, that is,
$G^+=\overline{G^+}/N\leq  \mathbb{Z}_{p-1}$, which is impossible. Thus $C_{\overline{G^+}}(N)=N_{\overline{G^+}}(N)$.

Since $G^+$ is a nonabelian simple group, we have $G^+=(G^+)'$. Thus  $\overline{G^+}'N/N=(\overline{G^+}/N)'=(G^+)'=G^+=\overline{G^+}/N$, and so
$\overline{G^+}'N=\overline{G^+}$. Since $N=\mathbb{Z}_p$ where $p$ is a prime, we have $\overline{G^+}'\cap N=1$ or $N$.

Suppose that $\overline{G^+}'\cap N=N$.  Then since $C_{\overline{G^+}}(N)=N_{\overline{G^+}}(N)=\overline{G^+}$, we have $N\leq \overline{G^+}'\cap Z(\overline{G^+})$, and so $N$ is contained in the Schur multiplier of $G^+$.
The Schur multiplier of $G^+=PSL(2,11)$ is $\mathbb{Z}_2$, and so $N=\mathbb{Z}_2$. It follows that $\overline{G^+}=SL(2,11)$. Since
$SL(2,11)$ has only one involution, it follows that $A_5$  is not a subgroup of $\overline{G^+}$, contradicts that
the vertex stabilizer of $\overline{G^+}$ is isomorphic to $A_5\cong PSL(2,5)$. Hence $\overline{G^+}'\cap N\neq N$.

Thus  $\overline{G^+}'\cap N=1$. Then $\overline{G^+}=\overline{G^+}'\times N$ and $\overline{G^+}'\cong \overline{G^+}'/(\overline{G^+}'\cap N) \cong \overline{G^+}'N/N\cong \overline{G^+}/N\cong G^+\cong PSL(2,11)$.
In light of the fact that  $\Gamma$ is a cover of  $\Sigma$ which  is a bipartite graph, it follows that $\Gamma$ is also a  bipartite graph.
Let $\overline{U}$ and $\overline{W}$ be the two bipartite halves of  $\Gamma$ with respect  to
$U$ and $W$, respectively.
Suppose first that $\overline{G^+}'$ is transitive on only one bipartite half, say $\overline{U}$.  Take a fibre $\overline{B}$
in $\overline{U}$ and let $\overline{u}\in \overline{B}$.
Note that the vertex stabilizer $(\overline{G^+}')_{\overline{u}}$
is a subgroup of $(\overline{G^+}')_{\overline{B}}\cong PSL(2,5)$.
Since  $|\overline{B}|=p$  and  $PSL(2,5)$ does not contain subgroups of index 2 or 3, it follows that
$p=5$, this implies that $(\overline{G^+}')_{\overline{u}}\cong A_4$. As
$N_{\overline{G^+}'}((\overline{G^+}')_{\overline{u}})=(\overline{G^+}')_{\overline{u}}$, $\overline{u}$
is the only fixed vertex of $(\overline{G^+}')_{\overline{u}}$ in $\overline{U}$. Since
$(\overline{G^+}')_{\overline{B}}=(\overline{G^+}')_{\overline{u}}\times N$, it follows that
$(\overline{G^+}')_{\overline{u}}\leq (\overline{G^+}')_{\overline{B}}$ centralizes $N$. Since $N$ is transitive on $\overline{B}$,
$(\overline{G^+}')_{\overline{u}}$ fixes all 5 vertices in $\overline{B}$, which is a contradiction.

Suppose next that $\overline{G^+}'$ is transitive on both  the two bipartite halves  $\overline{U}$ and $\overline{W}$.
Then $\overline{G^+}'$ has $p$ orbits of size 11 on both $\overline{U}$ and $\overline{W}$. It follows that for each
vertex $\overline{u}$ of $\Gamma$, the vertex stabilizer $(\overline{G^+}')_{\overline{u}}$ and $(\overline{G^+})_{\overline{u}}$
are the same group. Thus $(\overline{G^+})_{\overline{u}}=(\overline{G^+}')_{\overline{u}}\leq \overline{G^+}'$.
Let $\{\overline{u},\overline{w}\}$ be an edge of $\Gamma$. Then by Lemma 2.4 of  \cite{DX-2000}, $\Gamma$ is isomorphic to a bi-coset graph
$Cos(\overline{G^+},(\overline{G^+})_{\overline{u}},(\overline{G^+})_{\overline{w}},D)$ where $D=(\overline{G^+})_{\overline{u}}(\overline{G^+})_{\overline{w}}$.
Since $(\overline{G^+})_{\overline{u}}$ and $(\overline{G^+})_{\overline{w}}$
are contained in $\overline{G^+}'$, it follows that $\langle D^{-1}D\rangle=\langle (\overline{G^+})_{\overline{u}},(\overline{G^+})_{\overline{w}}\rangle$
is not equal to $\overline{G^+}$. Thus by Lemma 2.3  of  \cite{DX-2000}, $\Gamma$ is disconnected, a contradiction. \qed

\begin{lemma}\label{cyccov-folded5cube}
There are no $2$-arc-transitive bicirculants arising as regular cyclic covers of the folded $5$-cube.

\end{lemma}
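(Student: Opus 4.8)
The plan is to mimic the structure of the preceding lemmas on cyclic covers of quasiprimitive base graphs, in particular the non-abelian simple part of the argument used for $B'(H_{11})$ and for $\K_{n,n}$. By Lemma \ref{2at-dih-lem1} it suffices to rule out regular $\mathbb{Z}_p$-covers of the folded $5$-cube for every prime $p$. So suppose $\Gamma = Y \times_\psi \mathbb{Z}_p$ is such a cover, where $Y$ denotes the folded $5$-cube, $N = \mathbb{Z}_p$ is the covering transformation group, and some $2$-arc-transitive group $G \leq \Aut(\Gamma)$ lifts $G/N \leq \Aut(Y)$. Recall from Section 2 that $\Aut(Y) = \mathbb{Z}_2^4 : S_5$, that $Y$ is strongly regular with parameters $(16,5,0,2)$ and hence has girth $4$, and that by Proposition \ref{bicirculant-quasiprimitive-th1} the relevant $2$-arc-transitive groups on $Y$ are the rank $3$ subgroups of $\AGL(4,2)$, with socle the regular elementary abelian subgroup $V = \mathbb{Z}_2^4$.

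First I would analyze the lift $\overline{G}$ of such a group $G$ (I will write $G$ again for $G/N$ to ease notation). Let $\overline{V}$ be the lift of the socle $V \cong \mathbb{Z}_2^4$ of $G$, so $\overline{V}/N \cong \mathbb{Z}_2^4$ and $\overline{V}$ has order $2^4 p$. The first case is $p = 2$: then $\overline{V}$ is a $2$-group of order $32$ covering $\mathbb{Z}_2^4$, and I would argue using Lemma \ref{lem-cent-1} together with the fact that $\overline{V}$ need not centralise $N$; here one should instead invoke the classification of $2$-arc-transitive abelian covers of the folded $5$-cube, or directly show that any connected $\mathbb{Z}_2$-cover of $Y$ on which a $2$-arc-transitive group lifts would have to be one of the known graphs on $32$ vertices, none of which is a bicirculant over $\mathbb{Z}_{16}$ — this can be checked because a bicirculant over $\mathbb{Z}_{16}$ would force $G$ (and hence $G/N$ acting on $Y$) to contain an element with two $16$-cycles, i.e. a regular element on each bipartite half, which is incompatible with $Y$ being non-bipartite of order $16$. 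The second case is $p$ odd: then $N = \mathbb{Z}_p$ is a normal cyclic subgroup of odd prime order in $\overline{V}$, so $N \unlhd C_{\overline{V}}(N)$, and since $\overline{V}/N$ is a $2$-group while $\Aut(N) \cong \mathbb{Z}_{p-1}$, the image of $\overline{V}/N$ in $\Aut(N)$ could still be nontrivial only if $p - 1$ is even, which it is; so the key point is that the commutator structure forces $\overline{V}/C_{\overline{V}}(N)$ to be an abelian $2$-group quotient of $\mathbb{Z}_2^4$, hence $C_{\overline{V}}(N)$ has index at most a small $2$-power, and then one examines whether such a $\overline{V}$ can lift from $V$ acting regularly on $Y$.

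The cleanest route in the odd-$p$ case, and the one I would actually carry out, is the voltage-computation argument: I would show that (a suitable large subgroup of) $G$ lifts to a group centralising $N$, apply Lemma \ref{lem-cent-1} to conclude that $\psi$ is constant on each orbit of that group acting on closed walks, and then use that the girth-$4$ cycles (the $4$-cycles) of $Y$ span the cycle space of $Y$ and form a single orbit under the $2$-arc-transitive action; a computation exactly parallel to the one in the proof of Lemma \ref{cyc-knn} with three points and two points of a $4$-cycle configuration then shows every $4$-cycle of $Y$ has trivial voltage, so by Lemma \ref{lem-vanish} the cover $\Gamma$ is disconnected, a contradiction. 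The main obstacle I anticipate is the $p=2$ case: unlike the odd case one cannot immediately pass to a centralising lift, and one must instead rule it out either by the bicirculant/regular-element obstruction described above (a $\mathbb{Z}_2$-cover of the non-bipartite graph $Y$ on $16$ vertices would have $32$ vertices and, to be a bicirculant, would need a fixed-point-free automorphism of order $16$ with two $16$-cycles, forcing a cyclic group of order $16$ in $\Aut(Y)$ acting with two orbits of size $8$ on the $16$ vertices — but $\Aut(Y) = \mathbb{Z}_2^4 : S_5$ contains no element of order $16$), or by citing the known list of $2$-arc-transitive covers of distance-regular graphs of small order. Either way, the argument terminates with a contradiction, proving that no such $2$-arc-transitive bicirculant cover exists.
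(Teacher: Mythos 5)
Your strategy is genuinely different from the paper's (which simply invokes the classification of pentavalent arc-transitive bicirculants, notes that $16$ must divide the order of the cover, and kills the surviving candidates via \cite[Theorem 1.1]{DMM-2008} and a block-counting argument), but as proposed it does not go through. The decisive problem is the odd-$p$ voltage computation. First, in the folded $5$-cube every $2$-arc lies in a \emph{unique} $4$-cycle (the graph is strongly regular with $\lambda=0$, $\mu=2$), so there is no configuration of three $4$-cycles satisfying $C_0=C_1+C_2$ as in the proofs of Lemmas \ref{cyc-knn} and \ref{cyc-pg}; transitivity on $4$-cycles together with a centralising lift only gives that all $4$-cycles carry one common voltage $c$, and no relation is available to force $c=0$. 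Second, and more fundamentally, the $4$-cycles do \emph{not} span the cycle space of the folded $5$-cube: the graph is non-bipartite (it contains $5$-cycles), and over $GF(2)$ an odd cycle is never a sum of even cycles, so Lemma \ref{lem-vanish} cannot be applied with $4$-cycles alone. The $5$-cube $Q_5$ is a concrete witness to both points: it is a connected regular $\mathbb{Z}_2$-cover of the folded $5$-cube on which a $2$-arc-transitive group lifts, and in it every $4$-cycle of the base lifts with trivial voltage. This example also shows that any correct proof must actually use the bicirculant hypothesis, which your odd-$p$ argument never does.

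The $p=2$ branch has a separate gap: you pass from a semiregular order-$16$ automorphism of the $32$-vertex cover to an order-$16$ element of $\Aut(Y)=\mathbb{Z}_2^4:S_5$, but this requires the order-$16$ automorphism to be fibre-preserving, i.e.\ to normalise the covering group $N$; $N$ is normal only in the lifted group, not in the full automorphism group of the cover, so the bicirculant subgroup need not project to $Y$. The paper sidesteps both difficulties by working on the cover itself: a $2$-arc-transitive bicirculant cover of the folded $5$-cube is a pentavalent arc-transitive bicirculant of order divisible by $16$, and inspection of the classification in \cite{AHK-2015,AHKOS-2015} leaves only a dihedrant family (excluded by \cite[Theorem 1.1]{DMM-2008}) and one graph on $48$ vertices (excluded because its blocks would have to meet both bipartite halves equally, forcing $|B\cap U|\cdot 16=24$). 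If you wish to rescue a voltage-theoretic argument, you would at minimum have to control the voltages of the $5$-cycles as well, and the bicirculant hypothesis would have to enter at that stage.
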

\proof Let $\Gamma$ be the folded $5$-cube. Then $\Gamma$ has valency 5. Suppose that
$\overline{\Gamma}$ is a $2$-arc-transitive bicirculant arising as a regular cyclic cover of $\Gamma$. Then
$\overline{\Gamma}$ is a symmetric valency 5 bicirculant. Note that the family of arc-transitive bicirculants of valency 5 has been classified in  \cite{AHK-2015,AHKOS-2015}.

Let    $H:=\mathbb{Z}_{24}$ be an additive group. Define the bicirculant  $BC_{24}[\varnothing,\{0,1,3,11,20\},\varnothing]$
to have vertex set the union of the left part $H_0=\{h_0|h\in H\}$ and the right part $H_1=\{h_1|h\in H\}$, and edge set   $\{\{h_0,(h+m)_1\}|m \in \{0,1,3,11,20\}\}$.

Since $|V(\Gamma)|=16$ and $\overline{\Gamma}$ is a  cover of $\Gamma$,
it follows that $16$ divides $|V(\overline{\Gamma})|$. By inspecting the candidates in \cite{AHK-2015,AHKOS-2015},    $\overline{\Gamma}$ is one of the following graphs:

 \begin{itemize}
\item[(1)] folded $5$-cube;

\item[(2)]  $BC_{24}[\varnothing,\{0,1,3,11,20\},\varnothing]$;

\item[(3)] $\Cay(D_{2n},\{b,ba,ba^{r+1},ba^{r^2+r+1},ba^{r^3+r^2+r+1}\})$, where   $D_{2n}=\langle a,b|a^n=b^2=(ba)^2=1 \rangle$,   $r \in \mathbb{Z}_n^*$ such that  $r^4+r^3+r^2+r+1 \equiv 0 \pmod n$ and $8|n$.
\end{itemize}

If  $\overline{\Gamma}=\Cay(D_{2n},\{b,ba,ba^{r+1},ba^{r^2+r+1},ba^{r^3+r^2+r+1}\})$, then $\overline{\Gamma}$ is a dihedrant, and so it is listed in \cite[Theorem 1.1]{DMM-2008}. However, by inspecting the graphs in \cite[Theorem 1.1]{DMM-2008}, $\overline{\Gamma}$ does not exist.

Now suppose that  $\overline{\Gamma}=BC_{24}[\varnothing,\{0,1,3,11,20\},\varnothing]$.  Then $\overline{\Gamma}$ is bipartite, and say the two bipartite halves
being $U$ and $W$.  Let $\mathcal{B}$ be the block system of $V(\overline{\Gamma})$
such that $\overline{\Gamma}_{\mathcal{B}}\cong \Gamma$. Let $B\in \mathcal{B}$. If $B$ is contained in $U$ or $W$, then
$\overline{\Gamma}_{\mathcal{B}}$ is bipartite. However,  $\Gamma$  is not a bipartite graph as it has girth 5,  a contradiction. Thus $|B\cap U|=|B\cap W|$, and so
$|C\cap U|=|C\cap W|$ for every $C\in \mathcal{B}$. Therefore
$|B\cap U|$ divides 24 and also  $|B\cap U|\times 16=24$, which is impossible. \qed

\begin{lemma}\label{cyc-pg}

\begin{itemize}
\item[(1)]  The graph  $B(\PG(d-1,q))$ does not have  $2$-arc-transitive  regular cyclic   covers as bicirculants  where $d\geq 3$ and $q$ is a prime power.

\item[(2)]   Suppose that $\overline{\Gamma}$ is a   $2$-arc-transitive  regular cyclic   cover of   $B'(\PG(d-1,q))$ as bicirculants  where $d\geq 3$ and $q$ is a prime power.
Then $\overline{\Gamma}$ is either $X(3,2)$ or the graph $\Gamma(d, q, r)$ where $r|q-1$.

\end{itemize}

\end{lemma}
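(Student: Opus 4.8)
The plan is to handle the two bipartite halves $B(\PG(d-1,q))$ and $B'(\PG(d-1,q))$ separately, in both cases invoking Lemma \ref{2at-dih-lem1} to reduce to the case where the covering transformation group $N$ is $\mathbb{Z}_p$ for a prime $p$. For part (1), I would let $\Sigma = B(\PG(d-1,q))$, whose setwise-stabilizer subgroup $G^+$ of the two bipartite halves contains $\PSL(d,q)$ (acting $2$-transitively on each half via the action on $1$-subspaces and on $(d-1)$-subspaces of the $d$-dimensional $GF(q)$-space). Lifting $G^+$ to $\overline{G^+}$ and arguing exactly as in Lemma \ref{cyc-H11} (using that $N \cong \mathbb{Z}_p$ is cyclic, so $N \unlhd C_{\overline{G^+}}(N)$, and then that the relevant simple section forces either $\overline{G^+} = C_{\overline{G^+}}(N)$ or a contradiction with $\Aut(N)$ abelian), one concludes that a large $2$-transitive-on-each-half group lifts while centralizing $N$. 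Then by Lemma \ref{lem-cent-1} all closed walks in an orbit have equal voltage; since the group is transitive on $4$-cycles (or on a generating set of the cycle space coming from incident/non-incident flags), the cancellation trick used in Lemma \ref{cyc-knn} (write one $4$-cycle as a sum of two others and use that reversed arcs have opposite voltage) shows every cycle in a spanning set has trivial voltage, so by Lemma \ref{lem-vanish} $\overline{\Gamma}$ is disconnected — a contradiction. One has to be slightly careful that the cycle space of $B(\PG(d-1,q))$ is spanned by short cycles on which $G^+$ (or its socle) acts transitively; the incidence graph has girth $6$, so the spanning cycles are $6$-cycles through chains $p_1 \subset H_1 \supset p_2 \subset H_2 \supset p_3 \subset H_1$, and $\PSL(d,q)$ is transitive on these, which suffices.

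For part (2), with $\Sigma = B'(\PG(d-1,q))$, the same group $G^+ \geq \PSL(d,q)$ acts, but now adjacency is "intersection is zero", and crucially this graph \emph{does} admit nontrivial $2$-arc-transitive cyclic covers — indeed $\Gamma(d,q,r)$ is by construction the quotient of the point-block incidence graph $\Gamma(d,q)$ by a subgroup $N \leq Z(GL(d,q))$ of index $r$, and by \cite[Proposition 4]{QDK-2016} this incidence graph is a $2$-arc-transitive dihedrant; so $\Gamma(d,q)$ itself is a cyclic cover of $B'(\PG(d-1,q))$ of order $q-1$, and its further $\mathbb{Z}_r$-quotients/covers give the family $\Gamma(d,q,r)$. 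The strategy is then: by Lemma \ref{2at-dih-lem1} reduce to $N = \mathbb{Z}_p$; the above "lifting centralizes $N$" argument still shows the derived subgroup $\overline{G^+}'$ either equals $\overline{G^+}$ modulo $N$ forcing disconnectedness, \emph{unless} the voltage assignment is cohomologically nontrivial in a way that survives — and the only way that happens is when $p \mid q-1$ and the cover is the one coming from the center of $GL(d,q)$, i.e. $\overline{\Gamma} \cong \Gamma(d,q,r)$ for the appropriate $r$ (prime case: $r = p$). The small sporadic exception $X(3,2)$ arises exactly when $(d,q) = (3,2)$ — here $B'(\PG(2,2))$ is the incidence graph of the Fano plane complement, a cubic graph, and one must separately identify its $\mathbb{Z}_2$-cover among the known cubic $2$-arc-transitive bicirculants; explicitly $X(3,2)$ is the bicirculant on $\mathbb{Z}_{14} \cup \mathbb{Z}_{14}'$ with connection set $\{0,1,9,11\}$, so one checks directly that it is a $2$-fold cover of $B'(\PG(2,2))$ and $2$-arc-transitive, while no larger prime $p$ works because $p \nmid q-1 = 1$.

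For the main classification step inside part (2) — identifying precisely which $\mathbb{Z}_p$-covers are connected and $2$-arc-transitive — I expect to rely on the structure of the lift $\overline{G^+}$ as a central extension of $G^+$. When $p \mid q-1$, the Schur multiplier considerations allow $N$ to be absorbed into a cover of $\PSL(d,q)$ sitting inside $GL(d,q)$ (or $SL(d,q)$), which is exactly the mechanism producing $\Gamma(d,q)$ and hence $\Gamma(d,q,r)$. When $p \nmid q-1$, the extension splits, $N$ is a direct factor, the voltages vanish on a spanning set of cycles, and Lemma \ref{lem-vanish} kills the cover. I would phrase this via the bi-coset graph description (Lemma 2.3 and Lemma 2.4 of \cite{DX-2000}, as already used in Lemma \ref{cyc-H11}): connectedness forces $\langle D^{-1}D\rangle = \overline{G^+}$, which fails precisely when $\overline{G^+}'$ is proper, and that proper case is ruled out except in the $\Gamma(d,q,r)$ (and $X(3,2)$) situations.

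The main obstacle will be the bookkeeping in part (2): cleanly separating "the cover is cohomologically trivial, hence disconnected" from "the cover is one of the central-type covers $\Gamma(d,q,r)$", and making sure the reduction to prime $p$ via Lemma \ref{2at-dih-lem1} is compatible with later reassembling all $r \mid q-1$ (not just prime $r$) — for this one notes that $\Gamma(d,q,r)$ for composite $r$ is a tower of prime cyclic covers, so no connected $2$-arc-transitive cyclic cover outside this tower can appear. A secondary technical point is verifying the $4$-cycle (resp. $6$-cycle) transitivity and the voltage-cancellation identity in the $B'$ incidence geometry, which is where I would expect to spend the most care, though it is analogous to the $\K_{n,n}$ computation in Lemma \ref{cyc-knn}.
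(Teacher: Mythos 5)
Your treatment of part (1) follows essentially the same route as the paper (reduce to $N\cong\mathbb{Z}_p$ via Lemma \ref{2at-dih-lem1}, show the lift of $G^+\cong \PSL(d,q)$ centralizes $N$, then use Lemma \ref{lem-cent-1}, transitivity on a spanning family of short cycles and the cancellation identity, and finish with Lemma \ref{lem-vanish}), but it contains a factual slip: $B(\PG(d-1,q))$ has girth $6$ only for $d=3$; for $d\geq 4$ two hyperplanes meet in $\frac{q^{d-2}-1}{q-1}\geq 3$ points, so the graph has $4$-cycles, and your claim that ``the spanning cycles are $6$-cycles \ldots and $\PSL(d,q)$ is transitive on these, which suffices'' is not justified in that range (transitivity on $6$-cycles and the spanning property are both unclear there). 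The paper handles this by splitting into $d=3$ (using $6$-cycles and \cite[Lemma 5.1(iii)]{DMM-2008}) and $d\geq 4$ (using $4$-cycles and \cite[Lemma 5.1(iv)]{DMM-2008}, then deducing triviality on $6$-cycles); your parenthetical mention of $4$-cycles shows you had the right tool, but as written the $d\geq4$ case is a gap that needs this case division.

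The genuine gap is in part (2). The key classification step --- that when the lift centralizes $N\cong\mathbb{Z}_p$ the only connected $2$-arc-transitive covers of $B'(\PG(d-1,q))$ that survive are $\Gamma(d,q,p)$ and the sporadic $X(3,2)$ --- is asserted (``the only way that happens is when $p\mid q-1$ and the cover is the one coming from the center of $GL(d,q)$'') but never argued: you do not compute or constrain the possible voltage assignments on the cycle space of $B'(\PG(d-1,q))$, you do not prove uniqueness of the central cover for each admissible $p$, and the claim that for $p\nmid q-1$ ``the extension splits, $N$ is a direct factor, the voltages vanish'' is not a proof (splitting of the extension does not by itself force trivial voltages). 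Worse, your final paragraph says the $4$-/$6$-cycle voltage-cancellation in the $B'$ geometry is ``analogous to the $\K_{n,n}$ computation'' of Lemma \ref{cyc-knn}; if it really were analogous it would force all short-cycle voltages to vanish and hence, by Lemma \ref{lem-vanish}, kill \emph{every} cover --- contradicting the fact that the nontrivial covers $\Gamma(d,q,r)$ do exist. So the heart of part (2) is precisely to identify where that cancellation argument breaks down for $B'$ and to classify the surviving cohomology classes, and this is missing from your proposal. For what it is worth, the paper itself does not carry out this step either: it disposes of part (2) in one line by appealing to ``a similar argument as in \cite{DMMX-2021}'' (a paper in preparation), so your sketch and the paper leave the same core argument undone, but your proposal cannot be regarded as a proof of part (2). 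Also a small point: $B'(\PG(2,2))$ is $4$-valent (the bipartite complement of the Heawood graph), not cubic, consistent with $X(3,2)$ having valency $4$.
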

\proof Let $\Gamma$ be a graph that is either $B(\PG(d-1,q))$ or $B'(\PG(d-1,q))$ where $d\geq 3$ and $q$ is a prime power.
Then $\Gamma$ is a bipartite graph, and we can assume   the two bipartite halves of  $\Gamma$
are $U$, the set of 1-dimensional subspaces, and $W$, the set of $(d-1)$-dimensional subspaces.


Suppose that  $\Gamma$ has   a 2-arc-transitive regular cyclic cover $\overline{\Gamma}$ which is  a bicirculant. Then by Lemma \ref{2at-dih-lem1}, we can assume  that the covering transformation group
$N$ is a cyclic group $\mathbb{Z}_p$ for some prime $p$.
Moreover, suppose that $\overline{\Gamma}:= \Gamma \times_\psi N$.

Since $\overline{\Gamma}$ is a cover of  $\Gamma$ which  is a bipartite graph, it follows that $\overline{\Gamma}$ is also a  bipartite graph.
And let $\overline{U}$ and $\overline{W}$ be the two bipartite halves of  $\overline{\Gamma}$ related  to
$U$ and $W$, respectively.

Let $G$  be a 2-arc-transitive subgroup of automorphisms of $\Gamma$ which can be lifted, and let $G^+\leq G$  be the corresponding index 2 normal subgroup of $G$ fixing $U$ and $W$ setwisely. Then $G^+\cong PSL(d, q)$.
Denote by
$\overline{G}$ and $\overline{G^+}$ the lifts of $G$ and $G^+$, respectively.
Then  $\overline{G^+}/N\cong  G^+$.

Since $N\cong \mathbb{Z}_p$ is an abelian normal subgroup of $\overline{G^+}$, it follows that  $C_{\overline{G^+}}(N)$ is a normal subgroup of $\overline{G^+}$ and $N\leq C_{\overline{G^+}}(N)$.
Since $C_{\overline{G^+}}(N)$ is a normal subgroup of $N_{\overline{G^+}}(N)$,
$C_{\overline{G^+}}(N)/N$ is a normal subgroup of $N_{\overline{G^+}}(N)/N=\overline{G^+}/N=G^+$ and as $G^+$ is a nonabelian simple group, it follows that
either $C_{\overline{G^+}}(N)/N=N_{\overline{G^+}}(N)/N$ or $C_{\overline{G^+}}(N)/N=1$. Consequently
either $C_{\overline{G^+}}(N)=N_{\overline{G^+}}(N)$ or $C_{\overline{G^+}}(N)=N$.

Suppose that $C_{\overline{G^+}}(N)=N$. Then
$$\overline{G^+}/N=\overline{G^+}/C_{\overline{G^+}}(N)\leq \Aut(N)\cong \mathbb{Z}_{p-1},$$
that is,
$$G^+=\overline{G^+}/N\leq  \mathbb{Z}_{p-1},$$
which is impossible. Thus
$$C_{\overline{G^+}}(N)=N_{\overline{G^+}}(N)=\overline{G^+}.$$

If $\Gamma$ is the graph $B'(\PG(d-1,q))$ where $d\geq 3$ and $q$ is a prime power, then
by a similar argument as in  \cite{DMMX-2021},   $\overline{\Gamma}$ is either $X(3,2)$ or the graph $\Gamma(d, q, r)$ where $r|q-1$, and so that (2) holds.

In the remaining, we assume that  $\Gamma$ is the graph $B(\PG(d-1,q))$ where $d\geq 3$ and $q$ is a prime power.


Suppose first that  $d = 3$. Then in the projective plane $\PG(2,q)$,  every pair of $1$-dimensional subspaces are contained
in exactly one $2$-dimensional subspace and every pair of $2$-dimensional subspaces intersect at exactly one $1$-dimensional subspace.
Hence  $\Gamma$ has girth 6.
Furthermore, any $1$-dimensional subspace $u\in  U$ is contained in  three $2$-dimensional subspaces, and say $w_i\in  W, i = 1, 2, 3$. Let $w$ be  a $2$-dimensional subspace  which does  not contain $u$, and consider the following 6-cycles: $C_{jk} = (u, w_j , w_j \cap w,w,w \cap w_k, w_k, u)$, where $j\neq k\in  \{1, 2, 3\}$.
By  \cite[Lemma 5.1 (iii)]{DMM-2008}, $G^+$ is transitive on the set of  6-cycles in $\Gamma$, it follows from
Lemma \ref{lem-cent-1} and the fact $C_{\overline{G^+}}(N)=\overline{G^+}$ that all 6-cycles have the same voltages. By calculation,
$\psi (C_{12}) = \psi (C_{13}) + \psi (C_{32})$, and so we have  $\psi (C_{jk}) = 0$. Hence all 6-cycles of $\Gamma$ have trivial voltages. However, since all the 6-cycles span the cycle space of $\Gamma$, it follows from Lemma \ref{lem-vanish} that  $\overline{\Gamma}$ is
disconnected, a contradiction.

Suppose next that $d\geq 4$.
Then in  the projective space $\PG(d-1,q)$,  every pair of $1$-dimensional subspaces are contained
in $\frac{q^{d-2}-1}{q-1}$  subspaces of dimension $(d-1)$ and every pair of  $(d-1)$-dimensional subspaces intersect at $\frac{q^{d-2}-1}{q-1}$ subspaces of dimension $1$. Thus  for any two
$(d-1)$-dimensional subspaces  $w_1, w_2\in  W$ we have $|w_1 \cap  w_2| =\frac{q^{d-2}-1}{q-1} \geq 3$, as $d\geq 4$.  Moreover, $\Gamma$ contains both 4-cycles and 6-cycles.

By \cite[Lemma 5.1 (iv)]{DMM-2008},  $G^+$ is  transitive on the
set of 4-cycles. Since
$C_{\overline{G^+}}(N)=\overline{G^+}$ and by Lemma \ref{lem-cent-1},  all 4-cycles have the same voltages. Since $|w_1 \cap  w_2| \geq 3$, it follows that $w_1 \cap  w_2$ has at least  three 1-dimensional subspaces, and say $u_i \in w_1 \cap  w_2$, where $i = 1, 2, 3$. For   the following 4-cycles: $C_{jk} = (w_1, u_j, w_2, u_k, w_1)$, where $j\neq k\in  \{1, 2, 3\}$, we have
$\psi(C_{12}) = \psi (C_{13}) + \psi (C_{32})$, and so $\psi (C_{jk}) = 0$. Hence the voltages of all 4-cycles
are trivial. Since any three $(d-1)$-dimensional subspaces intersect at some 1-dimensional subspaces, it follows  that
the voltages on all 6-cycles are trivial too. As 4-cycles and 6-cycles span the cycle space of $\Gamma$,
it follows from Lemma  \ref{lem-vanish} that the graph $\overline{\Gamma}$ is disconnected, a contradiction.

 This concludes
the proof.  \qed

\begin{lemma}\label{cyc-knn-nk2}
Let $\Gamma$ be a  $2$-arc-transitive bicirculant  as a regular cyclic cover of   $\K_{n,n}-n\K_2$ where  $n\geq 3$, with a non-trivial cyclic covering transformation group of order $d$.
Then  one of the following holds.

\begin{itemize}
\item[(1)] $n=4$,  $d=2$ and    $\Gamma=\K_{4}^{4}$;

\item[(2)] $n=4$, $d=3$ and  $\Gamma=GP(12,5)$;

\item[(3)] $n=4$,  $d=6$ and   $\Gamma=GP(24,5)$;

\item[(4)]  $n=5$,   $d=3$ and  $ \Gamma=X_2(3)$;

\item[(5)] $n=q+1\geq 6$ for some odd prime power $q$, and  $\Gamma=\K_{q+1}^{2d}$ for some $d\geq 2$   dividing $q-1$.
\end{itemize}

\end{lemma}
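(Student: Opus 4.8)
The plan is to run the voltage-graph analysis of the base graph $\K_{n,n}-n\K_2$ in the same spirit as the proofs of Lemmas~\ref{cyc-knn}, \ref{cyc-pg} and \ref{cyc-H11}; the new feature is that for a few shapes of the base the analysis does not collapse to a contradiction but instead pins down the genuine covers in (1)--(5). First I would clear away $n=3$: then $\K_{3,3}-3\K_2\cong C_6$, whose connected regular cyclic covers are cycles, already recorded under case~(1) of Theorem~\ref{th-quot-miniblock-1}; so from now on $n\geq 4$, in which case $Y:=\K_{n,n}-n\K_2$ has girth $4$ and $\Aut(Y)=S_n\times\mathbb{Z}_2$ with colour-preserving subgroup the diagonal copy of $S_n$. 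Write $\Gamma=Y\times_f N$ with $N\cong\mathbb{Z}_d$ the covering group and let $G\leq\Aut(Y)$ be a $2$-arc-transitive group lifting along the projection, with lift $\overline G$ acting $2$-arc-transitively on $\Gamma$. By Lemma~\ref{2at-dih-lem1} it suffices first to settle the case $d=p$ prime, the composite values $d\in\{3,6\}$ and $d\mid q-1$ in the conclusion being recovered afterwards from the explicit constructions of Lemmas~\ref{lem-cov-comp-minus1} and \ref{lem-cov-k55} together with the classification of cubic arc-transitive bicirculants. Let $G^+$ be the index-$2$ colour-preserving subgroup; since $Y$ is $2$-arc-transitive and its colour-preserving automorphisms act the same way on the two halves $U$ and $W$, the group $G^+$ is faithful and $3$-transitive on each of $U$ and $W$, and by Burnside's theorem \cite{Burnside} its socle $T$ is elementary abelian or nonabelian simple. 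Let $\overline{G^+},\overline T$ be the lifts of $G^+,T$.

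\emph{The affine case.} If $T\cong\mathbb{Z}_p^{\,t}$, then $n=p^t$ and $G^+\leq\AGL(t,p)$. The bicirculant structure of $\Gamma$, reduced modulo $N$ as in the proof of Lemma~\ref{cyc-knn}, forces $G^+$ to contain a regular cyclic subgroup of order $n=p^t$ on each of $U,W$, so by \cite[Lemma~4.1(ii)]{DMM-2008} either $p^t=p$ or $p^t=4$. If $p^t=p\geq 5$, a point stabiliser in $G^+\leq\AGL(1,p)$ is cyclic of order dividing $p-1$ and cannot be $2$-transitive on the $p-1$ neighbours of a vertex, contradicting $2$-arc-transitivity of $Y$; and $p^t=p$ with $p\leq 3$ forces $n=3$ (treated above) since $n\geq 3$. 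Hence $n=4$, $Y=Q_3$, and $\Gamma$ is a connected cubic $2$-arc-transitive bicirculant genuinely covering $Q_3$; going through the classification of cubic arc-transitive bicirculants from \cite{FGW-1971,MP-2000,P-2007} leaves exactly $\K_{4}^{4}$, $GP(12,5)$ and $GP(24,5)$, i.e. cases (1)--(3).

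\emph{The simple case.} Suppose $T$ is nonabelian simple. As in the proof of Lemma~\ref{cyc-knn}, $T$ has no non-trivial abelian quotient, so the normaliser/centraliser argument gives $\overline T=C_{\overline T}(N)$, i.e. $T$ lifts centralising $N$; hence by Lemma~\ref{lem-cent-1} the voltage of each closed walk of $Y$ is constant along its $T$-orbit. Picking $u_0,u_1,u_2\in U$ and $w_0,w_1\in W$ with $\{u_0,u_1,u_2\}\cap\{w_0,w_1\}=\varnothing$ gives three $4$-cycles $C_0,C_1,C_2$ with $C_0=C_1+C_2$ in the cycle space, so any $4$-cycle whose $T$-orbit meets $\{C_0,C_1,C_2\}$ carries a voltage $v$ with $v=2v$, hence trivial. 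When $T$ is $4$-transitive on $U$ — that is, $T=A_n$ in its natural action with $n\geq 6$, or a $4$-transitive Mathieu group — this makes all $4$-cycles, and the $6$-cycles they force, vanish; for these $n$ such cycles span the cycle space of $Y$, so $\Gamma$ is disconnected by Lemma~\ref{lem-vanish}, a contradiction. Combined with the requirement that $G^+$ contain a regular cyclic subgroup on $U$ whose point stabiliser is $2$-transitive on the $n-1$ neighbours, the only survivors are $\PSL(2,q)\leq G^+\leq\PGL(2,q)$ acting on $U\cong W\cong\PG(1,q)$ with $n=q+1$, and the coincidence $n=5$, $A_5\cong\PSL(2,4)\leq G^+\leq S_5$. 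In the first case $\PGL(2,q)$ is only sharply $3$-transitive, so it has several orbits on the $4$-cycles of $Y$ and the voltages need not be trivial; carrying out the analysis as in the construction of $\K_{q+1}^{2d}$ in Lemma~\ref{lem-cov-comp-minus1} — normalising $f$ using the $\PGL(2,q)$-action and the constancy of voltages on its orbits, and reading off the residual $\mathbb{Z}_d$-parameter — yields $\Gamma\cong\K_{q+1}^{2d}$ with $d\mid q-1$, $d\geq 2$, $q+1\geq 6$ (the dihedrant sub-range being already in \cite[Theorem~1.2]{DMM-2008}), i.e. case~(5). For $n=5$ the same bookkeeping with $A_5\leq G^+\leq S_5$ and $N\cong\mathbb{Z}_3$ reproduces the voltage assignment of Lemma~\ref{lem-cov-k55}, giving $\Gamma\cong X_2(3)$, i.e. case~(4). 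Finally one checks, via \cite[Lemma~4.1]{DMM-2008} and via the cubic classification for $q\leq 4$, that no further small $q$ contributes, and that passing from the prime covers back to arbitrary $d$ via Lemma~\ref{2at-dih-lem1} adds only the composite parameters already listed.

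\emph{Main obstacle.} The delicate step is the cycle-space bookkeeping in the simple case: determining precisely which cycles of $\K_{n,n}-n\K_2$ are forced to have trivial voltage once every $T$-orbit of $4$-cycles does, proving that for $T=A_n$ ($n\geq 6$) and the $4$-transitive Mathieu groups these span the cycle space (so Lemma~\ref{lem-vanish} bites) whereas for $T=\PSL(2,q)$ (hence $n=q+1$) and for $n=5$ they do not, and then squeezing the surviving voltage assignments — up to equivalence and the $\mathbb{Z}_d$-parameter — down to the explicit ones defining $\K_{q+1}^{2d}$ and $X_2(3)$.
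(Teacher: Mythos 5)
The paper does not re-derive this classification at all: it simply quotes Xu--Du's classification of $2$-arc-transitive regular cyclic covers of $\K_{n,n}-n\K_2$ (\cite[Theorem 1.1]{XD-2014}), which already yields exactly the candidates $\K_4^4$, $X_1(3)=GP(12,5)$, $X(6)=GP(24,5)$, $X_2(3)$ and $\K_{q+1}^{2d}$ with $d\mid q-1$, and then only verifies that each of these is in fact a $2$-arc-transitive bicirculant (via Lemmas~\ref{lem-cov-k55} and \ref{lem-cov-comp-minus1} and the identification with generalized Petersen graphs). Your proposal instead tries to reprove that classification from scratch, and as it stands it has genuine gaps. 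The decisive one is the step you yourself flag as the ``main obstacle'': for $T=\PSL(2,q)$ (and for $n=5$) you assert that ``normalising $f$ using the $\PGL(2,q)$-action and the constancy of voltages on its orbits'' yields $\K_{q+1}^{2d}$, but this is precisely the substantive content of \cite{XD-2014} (determining the orbit structure on $4$- and $6$-cycles, which cycles span the cycle space, and squeezing the surviving voltage assignments); asserting it is not proving it, so the heart of the lemma is missing.

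Two further gaps would remain even if that analysis were supplied. First, your reduction via Lemma~\ref{2at-dih-lem1} only goes one way: non-existence of prime cyclic covers rules out all cyclic covers, but classifying the prime-order covers does not classify the composite-order ones, and your plan to ``recover'' $d=6$ and general $d\mid q-1$ afterwards from the explicit constructions of Lemmas~\ref{lem-cov-comp-minus1} and \ref{lem-cov-k55} only shows those graphs exist, not that no other composite-order covers do (the cubic classification rescues $n=4$ only, since there is no analogous valency classification for $n=q+1\geq 6$). Second, your elimination of the remaining $3$-transitive candidates leans on the claim that the bicirculant structure of $\Gamma$ forces $G^+$ to contain a regular cyclic subgroup of order $n$ on each half; the lemma's hypotheses give no relation between the semiregular cyclic group $H$ of the bicirculant and the covering group $N$ (and even in the context of Theorem~\ref{bic-reduction-th2} the $H/N$-orbits need not be the bipartition halves), so this is unjustified as stated. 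Without it, the $3$-transitive but not $4$-transitive groups $M_{11}$ on $12$ points and $M_{22}$, as well as the affine groups $\AGL(t,2)$ with $t\geq 3$ and $\mathbb{Z}_2^4{:}A_7$ (note that in the affine case $3$-transitivity forces $p=2$, so the case $p^t=p\geq 5$ you discuss does not arise, while $t\geq 3$ does), are not eliminated by your argument.
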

\proof  Since $\Gamma$ is a  $2$-arc-transitive graph  as a regular cyclic cover of  the graph $\K_{n,n}-n\K_2$ where  $n\geq 3$, it follows from   \cite[Theorem 1.1]{XD-2014}
that  $\Gamma$ is  one of the following graphs:

\begin{itemize}
\item[(1)] $n=4$ and  $\Gamma=\K_{4}^{4}$ for $d=2$; $X_1(3)$  for $d=3$;
or $X(6)$  for $d=6$;

\item[(2)]  $n=5$, $ \Gamma=X_2(3)$ for  $d=3$;

\item[(3)] $n=q+1\geq 6$ for some odd prime power $q=p^l$, and  $\Gamma=\K_{q+1}^{2d}$ for some $d\geq 2$   dividing $q-1$.
\end{itemize}

For the graph $X_2(3)$  in case (2) and the graph   $\K_{q+1}^{2d}$ in case (3), Lemmas \ref{lem-cov-k55} and \ref{lem-cov-comp-minus1} show that  both of these two graphs  are  $2$-arc-transitive bicirculants.

Now we argue  the graphs in case (1). Note  that $\K_{4}^{4}=X_1(4, 3)$ is the generalized Petersen graph $GP(8,3)$ (also the M\"obius-Kantor graph), see for instance  \cite[p.1364]{DMM-2008}. Consequently
$\K_{4}^{4}$ is a bicirculant.


The graph
$X_1(3)$ is the graph $CQ(1,3)(=GP(12,5))$ of \cite[p.721]{FW-2003} and $X(6)$ is the graph $CQ(1,6)(=GP(24,5))$ of \cite[p.721]{FW-2003}, see also for instance   \cite[Lemma 3.1]{XD-2014}. Thus these two graphs are bicirculants.

Moreover,  \cite{FW-2003} and \cite[Lemma 3.1]{XD-2014} also show that  $\K_{4}^{4}$,  $X_1(3)$ and  $X(6)$ are 2-arc-transitive.
 \qed

\bigskip

\bigskip

\section{Metacyclic Covers }

This subsection is devoted to finding the metacyclic covers of the basic $2$-arc-transitive bicirculants in case (2) of  Theorem \ref{bic-reduction-th2}.

\begin{lemma}\label{lem-index2-norm1}
Let $\Gamma$ be a  $2$-arc-transitive   cover of   $\Sigma$, with a non-trivial regular covering transformation group $N$.
Let  $M$  be  a characteristic subgroup of $N$. Then $M$ is a normal subgroup of $\Aut(\Gamma)$ and the following statements hold.

 \begin{itemize}
\item[(i)]  $\Gamma$ is a     cover of  $\Gamma_{M}$ with a regular covering transformation group $M$.

\item[(ii)]   $\Gamma_{M}$ is a $2$-arc-transitive cover of
 $\Sigma$, with a regular covering transformation group $N/M$.
\end{itemize}

\end{lemma}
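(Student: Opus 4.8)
The plan is to reduce the statement to three elementary facts: a characteristic subgroup of a normal subgroup is normal; a subgroup of a semiregular permutation group is semiregular; and a composite of regular covering projections is again a cover, through which $2$-arcs can be both lifted and projected. Throughout I use the data carried by the hypothesis: $N$ is a semiregular normal subgroup of $\Aut(\Gamma)$, its orbits are the fibres of the projection $\Gamma\to\Sigma=\Gamma_N$, and $\Gamma$ is a cover of $\Gamma_N$ in the sense of Section~2, so in particular each $N$-orbit is an independent set. Since $M$ is characteristic in $N$ and $N\unlhd\Aut(\Gamma)$, we at once obtain $M\unlhd\Aut(\Gamma)$; hence the partition $\mathcal{B}_M=\{\,v^M:v\in V(\Gamma)\,\}$ into $M$-orbits is $\Aut(\Gamma)$-invariant, $\Gamma_M=\Gamma_{\mathcal{B}_M}$ is well defined, and $M$, being a subgroup of the semiregular group $N$, is semiregular on $V(\Gamma)$ with the $M$-orbits as its orbits and as the fibres of $\Gamma\to\Gamma_M$.

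For (i) I would verify the covering condition for $\Gamma\to\Gamma_M$ by transporting an edge. Let $B,B'$ be adjacent blocks of $\mathcal{B}_M$, fix $v\in B$, and let $F,F'$ be the $N$-orbits containing $B,B'$. Since $N$-orbits carry no edges we have $F\neq F'$, and the edge witnessing $B\sim B'$ shows $F\sim F'$ in $\Gamma_N$; as $\Gamma$ is a cover of $\Gamma_N$, the vertex $v$ then has a unique neighbour $w$ in $F'$, so $|\Gamma(v)\cap B'|\leq 1$. Conversely, choose $x\in B$ and $y\in B'$ with $x\sim y$ and write $x=v^m$ with $m\in M$ ($M$ being regular on the $M$-orbit $B$). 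Then $y^{m^{-1}}$ is a neighbour of $v$ that lies in $B'$ (since $m^{-1}\in M$) and in $F'$ (since $m^{-1}\in N$), so $y^{m^{-1}}=w\in B'$ and $|\Gamma(v)\cap B'|=1$. Hence $\Gamma$ is a cover of $\Gamma_M$ with covering transformation group $M$, which gives (i).

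For (ii) I would argue in three short steps. First, identify $(\Gamma_M)_{N/M}$ with $\Gamma_N=\Sigma$: the assignment taking the $N/M$-orbit of an $M$-orbit $v^M$ to the $N$-orbit $v^N$ is a well-defined adjacency-preserving bijection, so $(\Gamma_M)_{N/M}\cong\Sigma$ as graphs. Second, $N/M$ is semiregular on $V(\Gamma_M)$: if $n\in N$ fixes $v^M$ setwise then $v^n=v^m$ for some $m\in M$, whence $nm^{-1}\in N_v=1$ and $n\in M$, so the stabiliser in $N$ of each vertex of $\Gamma_M$ is exactly $M$. Composing the projection $\Gamma\to\Gamma_M$ of (i) with $\Gamma\to\Gamma_N$ and repeating the edge-transport argument one level up then shows $\Gamma_M\to\Sigma$ is a cover with covering transformation group $N/M$. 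Third, since $\Gamma$ is a cover of $\Gamma_M$, every $2$-arc $(B_0,B_1,B_2)$ of $\Gamma_M$ lifts to a $2$-arc of $\Gamma$: pick $v_0\in B_0$, then the unique neighbour $v_1\in B_1$ of $v_0$, then the unique neighbour $v_2\in B_2$ of $v_1$, and note that $v_0\neq v_2$ because $B_0\neq B_2$. Conversely, every $2$-arc of $\Gamma$ projects to a $2$-arc of $\Gamma_M$, the cover condition forcing distinct projected endpoints. Since $M$ fixes $V(\Gamma_M)$ pointwise and $M\unlhd\Aut(\Gamma)$, transitivity of $\Aut(\Gamma)$ on the $2$-arcs of $\Gamma$ now yields transitivity of $\Aut(\Gamma)/M$ on the $2$-arcs of $\Gamma_M$, so $\Gamma_M$ is $2$-arc-transitive, proving (ii).

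The routine content here is the semiregularity and orbit-stabiliser bookkeeping. The step that needs genuine care, and which I expect to be the only real obstacle, is the last one: one must argue precisely that a covering projection both lifts and descends $2$-arcs, in particular that $B_0\neq B_2$ in the quotient forces the lifted endpoints $v_0,v_2$ to be distinct, so that a $2$-arc of $\Gamma_M$ really lifts to a $2$-arc of $\Gamma$ and not to a degenerate closed walk; the accompanying point is that the identification $(\Gamma_M)_{N/M}\cong\Gamma_N$ should be recorded as an isomorphism of graphs, not merely a bijection of vertex sets, since this is exactly what makes $\Gamma_M$ a cover of $\Sigma$.
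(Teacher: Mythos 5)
Your proposal is correct and follows essentially the same route as the paper: $M$ char $N$ normal gives $M\unlhd\Aut(\Gamma)$, the covering condition for $\Gamma\to\Gamma_M$ is verified by transporting edges with elements of $M$ against the cover property of $\Gamma\to\Gamma_N$, and $\Gamma_M\to\Sigma$ then inherits the cover and $2$-arc-transitivity properties. The only cosmetic difference is that the paper deduces $2$-arc-transitivity of $\Gamma_M$ from $2$-transitivity of the block stabiliser $(\Aut(\Gamma)/M)_C$ on $\Gamma_M(C)$, whereas you lift and project $2$-arcs directly; both are sound.
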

\proof Let $A:=\Aut(\Gamma)$.  Since $N$ is a normal subgroup of $A$,
it follows that each $N$-orbit  is a block of $A$. Let  $\mathcal{B}=\{B_1,\ldots,B_t\}$ be the set of $N$-orbits.

Since  $N$ is regular on each orbit $B_i$,
it follows that $M$ is semiregular on $B_i$, and so $M$ is regular on each of its own orbit.
Thus each $B_i$ is the union of $f=|N/M|$ disjoint $M$-orbits, and say $B_i=C_{i1}\cup C_{i2}\cup \ldots \cup C_{if}$ where $C_{ij}$ is an $M$-orbit and $C_{ir}\cap C_{is}=\emptyset$.
Hence  $\mathcal{C}=\{C_{11}, C_{12},\ldots,C_{1f},C_{21},\ldots,C_{tf}\}$ is the set of $M$-orbits.

Since $M$  is  a  characteristic subgroup of $N$, it follows that $M$ is a normal subgroup of the group $A$. Thus  each element of $ \mathcal{C}$ is a block of $A$ and the set $\mathcal{C}$ is an $A$-invariant partition of $V(\Gamma)$.

We denote by  $\Gamma_{M}$  the quotient graph that induced by the set $\mathcal{C}$.
Suppose that $(C_{11},C_{21})$ is an arc of $\Gamma_{M}$ and  $c_{11}\in C_{11}$ is adjacent to a vertex $c_{21}\in C_{21}$. Assume that
there is another vertex $c_{12}\in C_{11}$ that is adjacent to $c_{2e}\in C_{2e}$ where $e\neq 1$. Then since each $C_{ij}$ is an $M$-orbit, there exists
$g\in M$ such that $c_{12}^g=c_{11}$. Hence $(c_{12},c_{2e})^g=(c_{11},c_{2e}^{g})$, and so
$c_{2e}^{g}\in C_{2e}$ and $c_{2e}^{g}\neq c_{21}$, this implies that  $\{c_{2e}^{g},c_{21}\}\subseteq \Gamma(c_{11})\cap B_2$,
and so $|\Gamma(c_{11})\cap B_2|\geq 2$. However, since $\Gamma$ is  a     cover of   $\Sigma\cong \Gamma_N$, we must have
$|\Gamma(c_{11})\cap B_2|=1$,  a contradiction.
Thus every vertex of $ C_{11}$ is  adjacent to non  vertex of $ C_{2e}$ where $e\neq 1$.
Since  $\Gamma$ is a cover of $\Gamma_{N}$ and $B_1,B_2$ are adjacent, it follows that  $[B_1\cup B_2]\cong |N|\K_2$,
and so  $[C_{11}\cup C_{21}]\cong |M|\K_2$.
Thus for any two sets $C_{ir}$ and $ C_{js}$,
the induced subgraph $[C_{ir}\cup C_{js}]$ is either a perfect matching  or edgeless.
Thus $\Gamma$ is a cover of $\Gamma_{M}$ with the regular covering transformation group $M$, and $\Gamma_{M}$  is a cover of $\Gamma_N\cong \Sigma$ with the regular covering transformation group $N/M$.

Let $C\in \mathcal{C}$ and $u$ be a vertex  in $ C$. Since $\Gamma$ is 2-arc-transitive, it follows  that  $A_u$ is 2-transitive on $\Gamma(u)$.
Then by the fact that $\Gamma$ is a cover of $\Gamma_{M}$ with the regular covering transformation group $M$, we can easily show that  $(A/M)_{C}$ is 2-transitive on $\Gamma_{M}(C)$, and so $\Gamma_{M}$ is $(A/M,2)$-arc-transitive.
\qed

\begin{lemma}\label{metacycl-lem-index2-3}
Let $\Gamma$ be a connected $(G,2)$-arc-transitive  bicirculant  of valency at least $3$ over   the    cyclic subgroup $H$ of $G$.
Let $H_0$ and $H_1$ be the two $H$-orbits on $V(\Gamma)$. Let $N$ be a  normal subgroup of $G$ maximal with respect to having at least $3$ orbits.
Suppose that each $N$-orbit intersects both $H_0$ and $H_1$ non-trivially.
Then $\Gamma$ and $N$ are in one of the following cases:

 \begin{itemize}

\item[(1)]    $ AT_D(4,6)$ with   $N\cong D_6$;

\item[(2)]  $ AT_D(5,6)$ with   $N\cong D_{6}$;

\item[(3)]  $ AT_Q(4,12)$ with   $N\cong Q_{12}$;

\item[(4)]  $ AT_Q(1+q,2d)$ with   $N\cong Q_{2d}$, where $d\mid q-1$ and $d\nmid \frac{1}{2}(q-1)$;

\item[(5)]  $ AT_D(1+q,2d)$ with   $N\cong D_{2d}$, where  $d\mid \frac{1}{2}(q-1)$ and $d\geq 2$.

\end{itemize}

\end{lemma}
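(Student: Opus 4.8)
The plan is to push the reduction of Theorem~\ref{bic-reduction-th2} one further step, combining the classification of metacyclic $2$-arc-transitive covers of complete graphs with the cyclic-cover results of Section~4. \emph{Set-up.} We are in case~(2) of Theorem~\ref{bic-reduction-th2}: $\Gamma$ is a cover of $\Gamma_N$, $\Gamma_N$ is a $(G/N,2)$-arc-transitive circulant, $N$ is regular on each of its orbits, and $N$ has the cyclic normal subgroup $K:=H\cap N$ of index~$2$, so $N$ is metacyclic and $G$ is a fibre-preserving $2$-arc-transitive group of the cover $\Gamma\to\Gamma_N$. A covering projection preserves valency, so the hypothesis that $\Gamma$ has valency at least $3$ rules out $\Gamma_N\cong C_n$; hence $\Gamma_N$ is $\K_n$, $\K_{n,n}$ with $n\geq 3$, or $\K_{n,n}-n\K_2$ with $n\geq 5$ odd (by Theorem~\ref{bic-reduction-th2}). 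If $N$ is cyclic then $\Gamma\to\Gamma_N$ is a cyclic cover and $\Gamma$ has already been determined in Section~4 (Lemmas~\ref{cyc-kn}, \ref{cyc-knn}, \ref{cyc-knn-nk2}); so I may and will assume $N$ is a non-cyclic metacyclic group, in particular $N\not\cong\mathbb{Z}_2$.

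\emph{The base graph $\K_n$.} Here $G$ is a fibre-preserving $2$-arc-transitive group of a regular cover $\Gamma\to\K_n$ with non-trivial metacyclic covering group $N$, so Theorem~\ref{metacyclic-complete-th1} applies. Its first conclusion, $\Gamma\cong\K_{n,n}-n\K_2$ with $N\cong\mathbb{Z}_2$, is excluded because $N$ is non-cyclic; the remaining conclusions are precisely $AT_D(4,6)$ with $N\cong D_6$, $AT_D(5,6)$ with $N\cong D_6$, $AT_Q(4,12)$ with $N\cong Q_{12}$, $AT_Q(1+q,2d)$ with $N\cong Q_{2d}$, and $AT_D(1+q,2d)$ with $N\cong D_{2d}$ under the stated divisibility conditions, i.e.\ cases (1)--(5) of the statement.

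\emph{The bipartite base graphs.} It remains to eliminate $\Gamma_N\cong\K_{n,n}$ and $\Gamma_N\cong\K_{n,n}-n\K_2$. Suppose first that $K=H\cap N$ is characteristic in $N$. Then Lemma~\ref{lem-index2-norm1} with $M=K$ gives $K\lhd\Aut(\Gamma)$, exhibits $\Gamma$ as a cover of $\Gamma_K$ with covering group $K$, and shows $\Gamma_K$ is a $2$-arc-transitive regular cyclic cover of $\Gamma_N$ with covering group $N/K\cong\mathbb{Z}_2$; since $K\leq H$ and $K\lhd\Aut(\Gamma)$, the group $H/K$ acts semiregularly on $V(\Gamma_K)$ with exactly two orbits, so $\Gamma_K$ is again a bicirculant. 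Thus $\Gamma_K$ is a connected $2$-arc-transitive bicirculant arising as a regular cyclic ($\mathbb{Z}_2$-)cover of $\Gamma_N$: for $\Gamma_N\cong\K_{n,n}$ this contradicts Lemma~\ref{cyc-knn}, and for $\Gamma_N\cong\K_{n,n}-n\K_2$ with $n\geq 5$ odd it contradicts Lemma~\ref{cyc-knn-nk2}, since there every base $\K_{m,m}-m\K_2$ admitting a cyclic cover of order $2$ has $m=4$, the only base with $m$ odd ($X_2(3)$, $m=5$) occurs with covering order $3$, and the family $\K_{q+1}^{2d}$ has $q$ odd and hence $m=q+1$ even. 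The remaining non-cyclic metacyclic $N$ for which $K$ is not characteristic are disposed of by first peeling off the Frattini subgroup $\Phi(N)\le K$ (characteristic in $N$, hence normal in $\Aut(\Gamma)$) via Lemma~\ref{lem-index2-norm1} and inducting on $|N|$, reducing to $\Phi(N)=1$, i.e.\ $N\cong\mathbb{Z}_2^2$, which is then handled directly as in the proof of Lemma~\ref{cyc-knn}: the $2$-transitive groups induced on the two bipartite halves of $\Gamma_N$ supply socle factors whose direct product $T$ lifts to a group centralizing $N$ --- a simple-by-simple group has no non-trivial homomorphism into $\Aut(\mathbb{Z}_2^2)\cong S_3$, and the elementary-abelian-socle subcase runs exactly as in Lemma~\ref{cyc-knn} --- so by Lemma~\ref{lem-cent-1} all $4$-cycles of $\Gamma_N$ receive the same voltage, which a three-term relation among $4$-cycles forces to be trivial; as $4$-cycles span the cycle space, Lemma~\ref{lem-vanish} gives disconnectedness, a contradiction. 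Collecting the three cases $\K_n$, $\K_{n,n}$, $\K_{n,n}-n\K_2$ yields exactly (1)--(5).

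I expect the real difficulty to lie in this last point: eliminating the bipartite base graphs for the exceptional covering group $\mathbb{Z}_2^2$, where one cannot peel off a characteristic cyclic subgroup and must re-run a voltage-vanishing argument over a non-cyclic group, keeping the nonabelian-simple and elementary-abelian cases for the two $2$-transitive halves apart (and, for the $\K_{n,n}-n\K_2$ base, adapting the relevant parts of the proof of Lemma~\ref{cyc-knn-nk2}). Everything else is bookkeeping built on Theorems~\ref{bic-reduction-th2} and~\ref{metacyclic-complete-th1} and the Section~4 lemmas.
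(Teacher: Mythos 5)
Your overall architecture (complete-graph base handled by Theorem~\ref{metacyclic-complete-th1}, bipartite bases eliminated by reducing to cyclic covers and quoting the Section~4 lemmas) is the same as the paper's, but the step you yourself flag as the crux is where your argument breaks. When $H\cap N$ is not characteristic in $N$, your reduction ``peel off $\Phi(N)$ and induct, reducing to $\Phi(N)=1$, i.e.\ $N\cong\mathbb{Z}_2^2$'' is false: for example $N\cong\mathbb{Z}_2\times\mathbb{Z}_{2p}$ ($p$ an odd prime) is non-cyclic metacyclic, has three cyclic index-$2$ subgroups permuted by $\Aut(N)$ (so none is characteristic), and has $\Phi(N)=1$, so your induction terminates at groups other than $\mathbb{Z}_2^2$. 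Even for $N\cong\mathbb{Z}_2^2$ the sketch is incomplete: the elementary-abelian-socle subcase of Lemma~\ref{cyc-knn} does not ``run exactly as'' before, since that proof invokes results specific to \emph{cyclic} covers of $\K_{n,n}$ and uses that the bipartition halves are $H$-orbits, which is not available in case~(2), where each $N$-orbit meets both $H_0$ and $H_1$. The paper avoids this entire dichotomy: it quotients by the derived subgroup $N'=[N,N]$, which is \emph{always} characteristic and contained in $H\cap N$ (so $\Gamma_{N'}$ is still a bicirculant), and then chooses inside the abelian group $K=N/N'$ a characteristic subgroup $K_x$ (containing the image of $b$) with $K/K_x$ cyclic, via Lemma~\ref{lem-index2-norm1}. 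Hence $\Gamma_{K_x}\to\Gamma_N$ is always a cyclic cover, the $\K_{n,n}$ base dies by Lemma~\ref{cyc-knn}, and the $\K_{n,n}-n\K_2$ base ($n\geq 5$ odd) dies by Lemma~\ref{cyc-knn-nk2} together with the observation that a putative $X_2(3)$ quotient would be a circulant, contradicting the $2$-arc-transitive circulant classification; no voltage computation over a non-cyclic covering group is ever needed.

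The second gap is your opening move ``I may assume $N$ is non-cyclic because the cyclic case was determined in Section~4.'' The statement to be proved asserts that $\Gamma$ and $N$ lie in (1)--(5), and the hypotheses do not force $N$ to be non-cyclic: take $\Gamma=\K_{n,n}-n\K_2$ with $G=S_n\times S_2$, $H$ generated by an $n$-cycle acting on each bipartition half, and $N$ the central $\mathbb{Z}_2$; then $N$ is maximal with at least three orbits, each $N$-orbit meets both $H$-orbits, yet $\Gamma$ is not among (1)--(5). So setting the cyclic case aside with a pointer to Lemmas~\ref{cyc-kn}, \ref{cyc-knn}, \ref{cyc-knn-nk2} proves only a weaker disjunction, not the lemma as stated. (To be fair, the paper's own proof handles the $\K_n$ base by asserting that $N$ is non-abelian, also without justification, which is what excludes case~(1) of Theorem~\ref{metacyclic-complete-th1}; but in your write-up the mismatch with the stated conclusion is explicit, and it needs either an argument ruling such $N$ out or an enlargement of the conclusion, not a deferral.)
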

\proof Let  $\mathcal{B}=\{B_1,\ldots,B_f\}$ be the set of $N$-orbits, so that $f\geqslant 3$. Since each $N$-orbit intersects both $H_0$ and $H_1$ non-trivially, it follows that $|B_i\cap H_0|=|B_i\cap H_1|$.
Further, the group $HN/N\cong H/(H\cap N)$ is regular on $\mathcal{B}$, and so
$H\cap N$ is semiregular on each $B\in \mathcal{B}$ with two orbits. Since $N$ is regular on each orbit, it follows that   $H\cap N$ is a cyclic index 2 normal subgroup of $N$.




Hence we can assume that $H\cap N=\langle a\rangle$, $N=\langle a,b\rangle$, where
$a^n=1,b^2=a^t,b^{-1}ab=a^r$,  $r^2\equiv 1\pmod{n}$ and $t(r-1)\equiv 0\pmod{n}$.
It follows that  $|b|=\frac{2n}{(t,n)}$.
Moreover, the element $b$ swops the intersection of $N$ with $H_0$ and the intersection of $N$ with $H_1$.
The derived subgroup of $N$ is $\langle a^{r-1}\rangle$, and this cyclic group   is a characteristic subgroup of $N$ and so it is a normal subgroup of $G:=\Aut(\Gamma)$.

Let $N':=\langle a^{r-1}\rangle$. Then $N'\leq H\cap N$ and so each orbit  of $N'$ is a  subset of $H_0$ or $H_1$, hence  $\Gamma_{N'}$ is a bicirculant over $H/H\cap N$.
Moreover,  by Lemma \ref{lem-index2-norm1}, for the  group $N'$, the projection $\Gamma \mapsto \Gamma_N$
can be factorized as
$$\Gamma \mapsto \Gamma_{N'} \mapsto \Gamma_N,$$
where
$\Gamma \mapsto \Gamma_{N'}$ means that $\Gamma$ is a $2$-arc-transitive regular cover of $\Gamma_{N'}$ with the covering transformation group  $N'$, and
$\Gamma_{N'} \mapsto \Gamma_N$ means that $\Gamma_{N'}$ is a $(G/N',2)$-arc-transitive regular cover of $\Gamma_{N}$ with the covering transformation group $N/N'$.

Let $K:=N/N'$. Then  $K$ is an abelian metacyclic group.
Since $N'=\langle a^{r-1}\rangle$, we can suppose  that   $K$ is a group with the  form $\langle e,b\rangle$, and  $\langle e,b\rangle/\langle e\rangle\cong \mathbb{Z}_2$.
If  $(|e|,2)=1$, then as $K$ is  abelian and $K/\langle e\rangle\cong \mathbb{Z}_2$, we have $K=\langle e\rangle\times \mathbb{Z}_{2}\cong \mathbb{Z}_{|e|}\times \mathbb{Z}_{2}$.
Further,  $K_1:=\langle e\rangle$ is a characteristic subgroup of $K$ and $K/K_1\cong \mathbb{Z}_2$.
Suppose that $|e|$ is an even integer.
Assume first that  $|e|=2^x$  is a 2-power. Then $K$ is a 2-group.
Let $K_2:=\langle e^{2^{x-1}},b\rangle$. Then $K_2\cong \mathbb{Z}_{2}\times \mathbb{Z}_2$ is a characteristic subgroup of $K$ and $K/K_2\cong \mathbb{Z}_{2^{x-1}}$.
Assume next that $|e|$ is an even integer but  not  a 2-power. Then $|K|$ has an  odd prime divisor  $p$. Set $K_3:=\langle e^p,b\rangle$.  Then $K_3$ is a characteristic subgroup of $K$,
and
$K/K_3\cong \mathbb{Z}_p$.

Thus $K$ always has  a characteristic subgroup $K_x$ such that $K/K_x$ is a cyclic group.
Since
$K$ is a normal subgroup of $G/N'$, it follows that
$K_x$ is a normal subgroup of $G/N'$.

Again, using Lemma \ref{lem-index2-norm1}, for the  group $K_x$, the projection $\Gamma_{N'} \mapsto \Gamma_N$
can be  factorized as
$$\Gamma_{N'} \mapsto \Gamma_{K_x} \mapsto \Gamma_N,$$
where
$\Gamma_{N'} \mapsto \Gamma_{K_x}$ means that $\Gamma_{N'} $ is a $2$-arc-transitive regular cover of $\Gamma_{K_x}$  with the covering transformation group  $K_x$, and $\Gamma_{K_x} \mapsto \Gamma_N$ means that $\Gamma_{K_x} $ is a $2$-arc-transitive regular cover of $\Gamma_{N} $ with the cyclic covering transformation group $K/K_x$.




By Theorem \ref{bic-reduction-th2}, we know that the quotient graph $\Gamma_{N}$ is one of the following three families of  graphs (circulants):

 \begin{itemize}

\item[(a)]  complete graph $\K_n$ with $n\geq 3$;

\item[(b)] complete bipartite graph $\K_{n,n}$, $n\geq 3$;

\item[(c)]  $\K_{n,n}-n\K_2$, and $n\geq 5$ is odd.

\end{itemize}



Suppose that  (a) holds, that is,  $\Gamma_N $ is  a complete graph $\K_{n}$ where  $n\geq 3$.
Then since  $\Gamma$ is  a  $2$-arc-transitive bicirculant of valency at least $3$  as a regular  cover of   $\K_{n}$, with a nonabelian metacyclic covering transformation group $N$, we have $n\geq 4$, and   it follows from   Theorem \ref{metacyclic-complete-th1} that
$\Gamma$ is isomorphic to one of the following graphs:

\begin{itemize}
\item[(1)]    $n=4$, $ AT_D(4,6)$ with   $N\cong D_6$;

\item[(2)] $n=4$, $ AT_Q(4,12)$ with   $N\cong Q_{12}$;

\item[(3)] $n=5$, $ AT_D(5,6)$ with   $N\cong D_{6}$;

\item[(4)] $n=1+q\geq 4$, $ AT_Q(1+q,2d)$ with   $N\cong Q_{2d}$, where $d\mid q-1$ and $d\nmid \frac{1}{2}(q-1)$;

\item[(5)] $n=1+q\geq 6$, $ AT_D(1+q,2d)$ with   $N\cong D_{2d}$, where  $d\mid \frac{1}{2}(q-1)$ and $d\geq 2$.
\end{itemize}



Suppose that case (b) occurs. Then $\Gamma_{N}$ is bipartite, and so $\Gamma_{K_x}$ is bipartite.  Since $\Gamma_{N'}$ is a 2-arc-transitive bicirculant cover of $\Gamma_{K_x}$,  it follows from Theorem \ref{bic-reduction-th2}   that $\Gamma_{K_x}$ is either a bicirculant or a circulant. If $\Gamma_{K_x}$ is  a circulant, then since
$\Gamma_{K_x}$ has even number of vertices, it follows that
$\Gamma_{K_x}$ is also  a bicirculant. Thus $\Gamma_{K_x}$ is always isomorphic to  a bicirculant.
However, by Lemma \ref{cyc-knn},  there are no $2$-arc-transitive bicirculants arising as regular cyclic covers of the graph  $\K_{n,n}$ where $n\geq 3$.
Thus $\Gamma_N$ is not isomorphic to $\K_{n,n}$ for any $n\geq 3$, and (b) does not occur.

Now  suppose that (c) holds, that is, $\Gamma_N=\K_{n,n}-n\K_2$ where  $n\geq 5$ is odd. Then as  $\Gamma_{K_x}$ is a $2$-arc-transitive cover of $\Gamma_{N}$ with cyclic covering transformation group $K/K_x$,  of prime order,
it follows from  Lemma \ref{cyc-knn-nk2} that  $n$, $|K/K_x|$ and the graph  $\Gamma_{K_x}$ are  one of the following cases:

\begin{itemize}
\item[(1)] $n=4$,  $d=|K/K_x|=2$ and    $\K_{4}^{4}$;

\item[(2)] $n=4$, $d=|K/K_x|=3$ and  $GP(12,5)$;

\item[(3)]  $n=5$,   $d=|K/K_x|=3$ and  $ X_2(3)$ ;

\item[(4)] $n=q+1\geq 6$ for some odd prime power $q=p^l$, and  $\K_{q+1}^{2d}$ for some $d=|K/K_x|\geq 2$   dividing $q-1$.
\end{itemize}

Since $n\geq 5$ is an odd integer, it follows that cases (1) and (2) do not occur.

If case (4) occurs, then   $\Gamma_{K_x}\cong \K_{q+1}^{2d}$ where $q+1\geq 6$ for some odd prime power $q=p^l$
However,  in this case  $n=d(q+1)$ is an even integer,
contradicts the fact that  $n\geq 5$ is odd.
Thus $\Gamma_{K_x}\ncong \K_{q+1}^{2d}$.

Finally assume that  case (3) occurs. Then  $\Gamma_{K_x}\cong X_2(3)$,  $n=15$  and
$\Gamma_N=\K_{5,5}-5\K_2$.
Further,   $\Gamma_{K_x}\cong X_2(3)$ implies that the covering transformation group  $K/K_x\cong \mathbb{Z}_3$.
Hence by the previous argument, we have  $K_x=\langle e^3,b\rangle$.
Recall that   $\Gamma_{N'}$ is a bicirculant over a subgroup $N'$ of $H$.
Let $(N')_0$ and $(N')_1$ be the two orbits of $N'$ on $V(\Gamma_N')$.
Then the element $b$ swops the intersection of $K_x$ with $(N')_0$ and the intersection of $K_x$ with $(N')_1$. Thus
each orbit of $K_x$ intersects both $(N')_0$ and $(N')_1$ non-trivially.
It follows that
$\Gamma_{K_x}$ is a circulant. However,  by \cite[Theorem 1.1]{ACMX-1996},
the graph $X_2(3)$ is a not a 2-arc-transitive circulant, a contradiction.
Thus $\Gamma_{K_x}\ncong X_2(3)$, and this leads to that  $\Gamma_N\neq \K_{n,n}-n\K_2$ for any odd integer $n\geq 5$. This completes the proof.
\qed


\bigskip

\bigskip

\section{Proof of Main Theorem }

\medskip
We are ready to prove the main theorem.

\medskip

\noindent {\bf Proof of Theorem \ref{th-quot-miniblock-1}.}
Let $\Gamma$ be a connected $2$-arc-transitive  bicirculant   over   the    cyclic subgroup $H$ of $G:=\Aut(\Gamma)$.
Then $\Gamma$ has at least 4 vertices.
If $\Gamma$ has valency 2, then $\Gamma$ is a cycle graph.  Since $\Gamma$ is a bicirculant with  at least 4 vertices, it follows that $\Gamma\cong C_{2n}$ for some $n\geq 2$.
In the remainder of this proof, we assume that  $\Gamma$ has valency at least 3.

Suppose that  $G$ is quasiprimitive on $V(\Gamma)$. Then it follows from
Proposition \ref{bicirculant-quasiprimitive-th1} that
$\Gamma$ is one of the following graphs:

\begin{center}
   $\K_{2n}$ with $n\geq 2$,  Petersen graph or its complement,

 $\H(2,4)$ or its complement,  folded $5$-cube or its complement.
\end{center}

Note that the following   graphs from the above list  are non-complete and have girth 3:  $\H(2,4)$ and its complement,  the complement of the folded $5$-cube and  the complement the Petersen graph. Thus these graphs  are not 2-arc-transitive. Hence  $\Gamma$ is one of the following three graphs:

\begin{center}
  $\K_{2n}$ with $n\geq 2$,   Petersen graph and    folded $5$-cube.
\end{center}

It remains to consider the case that   $G$ is not quasiprimitive on $V(\Gamma)$.
In this case $G$ has at least one intransitive normal subgroup, and this subgroup has at least 2 orbits on $V(\Gamma)$.

If  every non-trivial normal subgroup of $G$ has at most two orbits on $V(\Gamma)$ and there exists one which has exactly two orbits on $V(\Gamma)$, then $G$ is bi-quasiprimitive on $V(\Gamma)$ and  $\Gamma$ is a bipartite graph.
If the two bipartite halves are $H$-orbits, then by \cite[Proposition 5.1]{DGJ-2019},
  $\Gamma$ is one of the following graphs:
\begin{enumerate}[{\rm (a)}]
\item $\K_{n,n}$ where $n\geqslant 3$;
\item $ \K_{n,n}-n\K_2$ where $n\geqslant 4$;
\item  $B(H(11))$ and $B'(H(11))$;
\item $B(\PG(d-1,q))$ and $B'(\PG(d-1,q))$, where $d\geq 3$, $q$ is a prime power.
\end{enumerate}

If the two bipartite halves are not the  $H$-orbits, then by \cite[Proposition 5.2]{DGJ-2019},
$n$ is even and  $\Gamma$ is one of $\K_{n,n}$, $\K_{n,n}-n\K_2$, $B(\PG(d-1,q)$ or $B'(\PG(d-1,q))$ where $n=\frac{q^d-1}{q-1}$,   $d\geqslant 3$ is even and  $q$ is an odd prime power.

Now let $N$ be a  normal subgroup of $G$ maximal with respect to having at least $3$ orbits.
Let $H_0$ and $H_1$ be the two $H$-orbits on $V(\Gamma)$. Then by Theorem \ref{bic-reduction-th2},
$\Gamma$ is a cover of $\Gamma_N$,
$\Gamma_N$ is $(G/N,2)$-arc-transitive, and $G/N$ is faithful and either quasiprimitive or bi-quasiprimitive on $V(\Gamma_N)$.
 Moreover, one of the following holds.

\begin{enumerate}[$(1)$]
\item each $N$-orbit is a subset of  either $H_0$ or $H_1$,   $N$       is cyclic, and
 $\Gamma_N$ is one of the following graphs:

\begin{enumerate}[$(a)$]
\item  $\K_n$ where $n\geq 4$;

\item Petersen graph;

\item  folded $5$-cube;

\item  $\K_{n,n}$, $n\geq 3$;

\item $ \K_{n,n}-n\K_2$, $n\geq 4$;

\item  $B(H(11))$ and $B'(H(11))$;

\item $B(\PG(d-1,q))$ and $B'(\PG(d-1,q))$, where $d\geq 3$, $q$ is a prime power.

\end{enumerate}

\item each $N$-orbit intersects both $H_0$ and $H_1$ non-trivially, and  $N$       has a cyclic index $2$ normal subgroup, and $\Gamma_N$ is one of the following circulants:

\begin{enumerate}[$(a)$]
\item   $\K_n$ with $n\geq 4$;

\item  $\K_{n,n}$, $n\geq 3$;

\item  $\K_{n,n}-n\K_2$, and $n\geq 5$ is odd.

\end{enumerate}

\end{enumerate}


If (1)(a)  occurs, that is,   $\Gamma $ is a  cyclic cover of the complete graph  $\K_{n}$ where $n\geq 4$, then
by Lemma \ref{cyc-kn},  $\Gamma$ is either  $\K_{n,n}-n\K_2$ or
$ X_1(4,q)$ where $q\equiv 3\pmod{4}$ and $q=n-1$.

If (1)(b)  occurs, that is, $\Gamma $ is a  cyclic cover of the Petersen graph, then it follows from  Lemma \ref{cyccov-petersen} that  $\Gamma$
is either the  Desargues graph or the dodecahedron graph.

By Lemma \ref{cyccov-folded5cube},    $\Gamma $ is not a  cyclic cover of  the folded $5$-cube,
and so  (1)(c) does not occur.

Suppose that (1)(d) holds. Then Theorem \ref{bic-reduction-th2} further indicates that $\Gamma_N $ is a bicirculant over the cyclic group $H/N$ and the
two bipart halves are the orbits of $H/N$. Using Lemma \ref{cyc-knn},   $\Gamma $ is not a  cyclic cover of  such a  $\Gamma_N$, a contradiction.

By Lemma \ref{cyc-knn-nk2},
if  (1)(e)  occurs, that is, $\Gamma$ is a   cyclic cover of   $\K_{n,n}-n\K_2$ where  $n\geq 4$,
then  one of the following holds:

\begin{itemize}
\item[(i)]  $\Gamma=\K_{4}^{4}$, $GP(12,5)$,  $GP(24,5)$;

\item[(ii)]  $ \Gamma=X_2(3)$;

\item[(iii)]  $\Gamma=\K_{q+1}^{2d}$ for some $d\geq 2$   dividing $q-1$, and $n=q+1\geq 6$ for some odd prime power $q$.
\end{itemize}

Lemma \ref{cyc-H11} says that    $\Gamma $ is neither  a  cyclic cover of $B(H(11))$ nor $B'(H(11))$, and so (1)(f) does not occur.

Suppose that (1)(g) holds.
Then by Lemma \ref{cyc-pg} (1),   $\Gamma $ is not  a  cyclic cover of $B(\PG(d-1,q))$; and
by Lemma \ref{cyc-pg} (2),   if $\Gamma $ is   a  cyclic cover of  $B'(\PG(d-1,q))$, where $d\geq 3$, $q$ is a prime power, then $\Gamma$ is either $X(3,2)$ or the graph $\Gamma(d, q, r)$ where $r|q-1$.

Finally,  suppose that case (2) occurs. Then it follows from
Lemma \ref{metacycl-lem-index2-3} that
$\Gamma$ is isomorphic to one of the following graphs:

 \begin{itemize}

\item[(i)]    $ AT_D(4,6)$ with   $N\cong D_6$;

\item[(ii)]  $ AT_D(5,6)$ with   $N\cong D_{6}$;

\item[(iii)]  $ AT_Q(4,12)$ with   $N\cong Q_{12}$;

\item[(iv)]  $ AT_Q(1+q,2d)$ with   $N\cong Q_{2d}$, where $d\mid q-1$ and $d\nmid \frac{1}{2}(q-1)$;

\item[(v)]  $ AT_D(1+q,2d)$ with   $N\cong D_{2d}$, where  $d\mid \frac{1}{2}(q-1)$ and $d\geq 2$.

\end{itemize}

We conclude the proof. \qed

\bigskip
\bigskip


\begin{thebibliography}{hhhh}

\bibitem{ACMX-1996}
B. Alspach, M. Conder, D. Maru$\check{{\rm s}}$i$\check{{\rm c} }$ and M. Y. Xu, A
classification of 2-arc-transitive circulants, {\it J. Algebraic
Combin.} {\bf 5} (1996), 83--86.

\bibitem{AHK-2015}
I. Anton\v{c}i\v{c}, A. Hujdurovi\v{c} and K. Kutnar,  A classification of
pentavalent arc-transitive bicirculants, {\it J. Algebraic Combin.} {\bf 41}
(2015), 643--668.

\bibitem{AHKOS-2015}
A. Arroyo, I. Hubard,  K. Kutnar,  E. O'Reilly and P. \v{S}parl,   Classification of
symmetric Taba\v{c}jn graphs, {\it Graphs  Combin.} {\bf 31}
(2015), 1137--1153.




\bibitem{BCN}
A. E. Brouwer, A. M. Cohen and A. Neumaier, {\it Distance-Regular Graphs},
Springer Verlag, Berlin, Heidelberg, New York, (1989).

\bibitem{Burnside}
W. Burnside, {\it Theory of Groups of Finite order}, Cambridge University
Press, Cambridge (1911).



\bibitem{Cameron-1}
P. J. Cameron,  {\it Permutation Groups}, volume 45 of London Mathematical
Society Student Texts, Cambridge University Press, Cambridge,
(1999).



\bibitem{CO-1987}
Y. Cheng and J. Oxley, On weakly symmetric graphs of order twice a
prime, {\it J. Combin. Theory Ser. B} {\bf 42} (1987), 196--211.

\bibitem{DGJ-2019}
A. Devillers, M. Giudici  and W. Jin, Arc-transitive bicirculants, 	arXiv:1904.06467.


\bibitem{DGLP-locdt-2012}
A. Devillers, M. Giudici, C. H. Li and C. E. Praeger, Locally
$s$-distance transitive Graphs, {\it J. Graph Theory } {\bf (2)69} (2012), 176--197.






\bibitem{DM-1}
J. D. Dixon and B. Mortimer,  {\it Permutation Groups}, Springer, New
York, (1996).

\bibitem{DX-2000}
S. F. Du and M. Y. Xu, A classification of semisymmetric graphs of order $2pq$, {\it Comm. Algebra} {\bf 28} (2000), 2685--2715.

\bibitem{DKX-2005}
S. F. Du, J. H. Kwark and M. Y. Xu, On 2-arc-transitive covers of complete graphs with covering transformation group $\mathbb{Z}_p^3$, {\it J. Comb. Theory Ser. B} {\bf 93} (2005), 73--93.

\bibitem{DMM-2008}
S. F. Du, A. Malni$\check{{\rm c}}$ and D. Maru$\check{{\rm s}}$i$\check{{\rm c}}$, Classification of 2-arc-transitive dihedrants, {\it J. Comb. Theory Ser. B} {\bf 98} (2008), 1349--1372.

\bibitem{DMW-1998}
S. F. Du,  D. Maru$\check{{\rm s}}$i$\check{{\rm c}}$ and   A. O. Waller, On 2-arc-transitive covers of complete graphs,  {\it J. Comb. Theory Ser. B} {\bf 74}(1998), 276--290.

\bibitem{DMMX-2021}
S. F. Du, A. Malni$\check{{\rm c}}$, D. Maru$\check{{\rm s}}$i$\check{{\rm c}}$ and   W. Q. Xu, A remark on 2-arc-transitive regular covers,  in preparation.


\bibitem{FengKwak-10p}
Y. Q. Feng and J. H. Kwak, Classifying cubic symmetric graphs of
order $10p$ or $10p^2$, {\it SCI. China A} {\bf 49} (2006),
300--319.

\bibitem{FW-2003}
Y. Q. Feng and K. S. Wang,  $S$-regular cyclic coverings of the three-dimensional hypercube $Q_3$, {\it European J.
Comb.} {\bf 24} (2003), 719--731.

\bibitem{Frucht-1}
R. Frucht,   Die gruppe des Petersen'schen Graphen und der Kantensysteme der regularen
Polyeder, {\it Comment. Math. Helv.}, {\bf 9} (1937), 217--223, doi:10.5169/seals-10183.

\bibitem{FGW-1971}
R. Frucht, J. E. Graver and M. E. Watkins, The groups of the generalized Petersen graphs, {\it Proc. Camb. Philos. Soc.} {\bf 70} (1971),
211--218.




\bibitem{GT-1987}
J. L. Gross and  T. W. Tucker, {\it Topological Graph Theory}, Wiley-Interscience, New York, 1987.




\bibitem{IP-1}
A. A. Ivanov and C. E. Praeger,  On finite affine 2-arc transitive
graphs, {\it European J. Combin.} {\bf 14} (1993), 421--444.

\bibitem{JMSV}
R. Jajcay, S. Miklavi\v{c}, P. \v{S}parl and G. Vasiljevi\'c, On certain edge-transitive bicirculants, 	 arXiv:1801.01106.



\bibitem{Jones-2002}
G. Jones, Cyclic regular subgroups of primitive permutation groups, {\it J. Group Theory} {\bf 5(4)} (2002), 403--407.



\bibitem{KKM-2010}
I. Kov\'acs, K. Kutnar and D. Marusi$\check{c}$,  Classification of edge-transitive rose window graphs, {\it J.
Graph Theory } {\bf 65} (2010), 216--231.

\bibitem{KKM}
I. Kov\'acs, B. Kuzman and A. Malni$\check{c}$,  On non-normal arc-transitive 4-valent dihedrants, {\it Acta Math.
Sinica, English Series } {\bf 26} (2010), 1485--1498.

\bibitem{KKMW-2012}
I. Kov\'acs, B. Kuzman,  A. Malni$\check{c}$ and S. Wilson,  Characterization of edge-transitive 4-valent bicirculants, {\it J. Graph Theory } {\bf 69} (2012), 441--463.


\bibitem{LM-1993}
K. H. Leung, S. L. Ma, Partial difference triples, {\it J. Algebraic Combin.} {\bf 2} (1993) 397--409.


\bibitem{LCH-abelianregular-2003}
C. H. Li, The finite primitive permutation groups containing an abelian regular subgroup, {\it Proc. London Math. Soc.} {\bf 87} (2003),
725--748.


\bibitem{Li-abeliancay-2008}
C. H. Li and J. M. Pan, Finite 2-arc-transitive abelian Cayley
graphs, {\it European J. Combin. } {\bf 29} (2008), 148--158.





\bibitem{MMSF-2007}
A. Malni$\check{{\rm c}}$, D. Maru$\check{{\rm s}}$i$\check{{\rm c}}$, P, $\check{{\rm S}}$parl and B. Frelih,   Symmetry structure of bicirculants,
{\it Discrete Math.} {\bf 307} (2007), 409--414.



\bibitem{MP-2000}
D. Maru$\check{{\rm s}}$i$\check{{\rm c}}$ and T. Pisanski,   Symmetries of hexagonal molecular graphs on the torus, {\it Croat. Chem. Acta} {\bf 73} (2000), 969--981.


\bibitem{Muller-2018}
P. M\"{u}ller, Permutation groups with a cyclic two-orbits subgroup and monodromy groups
of Laurent polynomials,  {\it Ann. Sc. Norm. Super. Pisa CI. Sci.} {\bf 12} (2013), 369--438.

\bibitem{P-2007}
T. Pisanski,  A classification of cubic bicirculants, {\it Discrete Math.} {\bf 307} (2007), 567--578.






\bibitem{QDK-2016}
Z. Qiao, S. F. Du and J. Koolen, 2-walk-regular dihedrants from group divisible designs, {\it Electronic J. Combin.} {\bf 23(2)} (2016), \#P2.51.

\bibitem{RJ-1992}
M. J. de Resmini and D. Jungnickel, Strongly regular semi-Cayley graphs, {\it J. Algebraic Combin.} {\bf 1} (1992) 171--195.





\bibitem{SLZ-2014}
S. J. Song, C. H. Li and H. Zhang, Finite permutation groups with a regular dihedral subgroup, and edge-transitive dihedrants, {\it
J. Algebra} {\bf 399} (2014) 948--959.




\bibitem{Tutte-1}
W. T. Tutte,  A family of  cubical  graphs, {\it Proc. Cambridge
Philos. Soc.} {\bf 43} (1947), 459--474.


\bibitem{Tutte-2}
W. T. Tutte, On the symmetry of cubic graphs, {\it Canad. J. Math.}
{\bf 11} (1959), 621--624.




\bibitem{weiss}
R. Weiss,  The non-existence of 8-transitive graphs, {\it
Combinatorica} {\bf 1} (1981), 309--311.

\bibitem{Wielandt-book}
H. Wielandt, {\it Finite Permutation Groups}, New York: Academic Press
(1964).

\bibitem{Wilson}
S. Wilson, Rose window graphs, \emph{Ars Math. Contemp.} \textbf{1} (2008), 7--19.

\bibitem{XD-2014}
W. Q. Xu and  S. F. Du,  2-Arc-transitive cyclic covers of $\K_{n,n}- n\K_2$, {\it J. Algebraic Combin.} {\bf 39} (2014), 883--902.

\bibitem{XD-2018}
W. Q. Xu and  S. F. Du,  2-Arc-transitive cyclic covers of $\K_{n,n}$, {\it J. Algebraic Combin.} {\bf 48} (2018), 647--664.

\bibitem{XDKX-2015}
W. Q. Xu,  S. F. Du, J. H. Kwak and M. Y. Xu,  2-Arc-transitive metacyclic covers of complete graphs,  {\it J. Combin. Theory Ser. B} {\bf 111} (2015), 54--74.

\bibitem{ZF-bicay-2014}
J. X. Zhou and Y. Q. Feng,  Cubic bi-Cayley graphs over abelian groups,
{\it European J. Combin.}, {\bf 36} (2014), 679--693.

\bibitem{ZF-bicir-2016}
J. X. Zhou and Y. Q. Feng,  The automorphisms of bi-Cayley graphs,
{\it J. Combin. Theory Ser. B}, {\bf
116} (2016), 504--532.

\end{thebibliography}
\end{document}